\date{}
\newtheorem{proposition}{Proposition}[section]
\newtheorem{theorem}[proposition]{Theorem}
\newtheorem{lemma}[proposition]{Lemma}
\newtheorem{example}[proposition]{Example}
\newtheorem{definition}[proposition]{Definition}
\newtheorem{corollary}[proposition]{Corollary}
\def\Hom{{\rm Hom}}
\def\der{\partial }
\def\nFM0{{\nu }_{F,M_0}}
\def\nFN0{{\nu }_{F,N_0}}
\def\nGN0{{\nu }_{G,N_0}}
\def\N0{ {\bf N}_0 }
\def\t{\otimes}
\def\ra{\rightarrow}
\def\Xpm{X^{\pm }}
\def\s{\sigma}
\def\l1{{\lambda}_1}
\def\a{\alpha}
\def\a0{ {\alpha }_0}
\def\a1{ {\alpha }_1}
\def\l{\lambda}
\def\nFGM0{{\nu }_{F,G,M_0}}
\def\nFN0{{\nu}_{F,N_0}}
\def\sm{{\sigma}^m}
\def\sm1{{\sigma}^{-1}}
\def\smtp1{{\sigma}^{-t+1}}
\def\S1{S^{-1}}
\def\Xpm1{X^{\pm 1}_1}
\def\sPM1{{\sigma }^{\pm 1}}
\def\sMP1{{\sigma }^{\mp 1 }}
\def\b{\beta}
\def\di{{\rm d.ind}}
\def\L{\Lambda}
\def\CA{{\cal A}}
\def\Ytm1{Y^{t-1}}
\def\Yim1{Y^{i-1}}
\def\CM{{\cal M}}
\def\CN{{\cal N}}
\def\ass{{\rm ass}}
\def\dim{{\rm dim }}
\def\ker{ {\rm ker } }
\def\SL2Z{ {\rm SL}_2({\bf Z}) }
\def\Gp1{ G^{1 , 1 } }
\def\P11{ P^{-1 , 1 } }
\def\Pp1{ P^{1 , 1 } }
\def\CV{{\cal V}}
\def\nCLsr{{}^\nu\kern-2pt {\cal L}^{\sigma , \rho  }}
\def\nP{{}^\nu \kern-2pt P}
\def\nL{{}^\nu\kern-2pt L}
\def\nLL{{}^\nu\kern-2pt \Lambda}
\def\nPsr{{}^\nu\kern-2pt P^{\sigma , \rho  }}
\def\nLsr{{}^\nu\kern-2pt L^{\sigma , \rho  }}
\def\nuCL{{}^\nu\kern-2pt  {\cal L}}
\def\nCLsr{{}^\nu\kern-2pt {\cal L}^{\sigma , \rho  }}
\def\nCL1m{{}^\nu\kern-2pt {\cal L}^{-1 , 1  }}
\def\x1nu{x^\frac{1}{\nu}}
\def\xm1nu{x^{-\frac{1}{\nu}}}
\def\rad{{\rm rad}}
\def\CN{{\cal N}}
\def\ra{\rightarrow }
\def\CB{{\cal B}}
\def\CI{{\cal I}}
\def\CC{ {\cal C}}
\def\nAM0{{\nu }_{{\cal A},M_0}}
\def\nAN0{{\nu }_{{\cal A},N_0}}
\def\End{ {\rm End }}
\def\of{\overline{f}}
\def\ga{\mathfrak{a}}
\def\gn{\mathfrak{n}}
\def\gp{\mathfrak{p}}
\def\gq{\mathfrak{q}}
\def\SL{{\rm SL}}
\def\Spec{{\rm Spec}}
\def\Hom{{\rm Hom}}
\def\di!{\frac{\der^i}{i!}}
\def\dik!{\frac{\der^k_i}{k!}}
\def\gl{\mathfrak{l}}
\def\id{{\rm id}}
\def\N{\mathbb{N}}
\def\0{\overline{0}}
\def\1{\overline{1}}
\def\Ln1{\L_{n,\overline{1}}}
\def\a1{a_{\overline{1}}}
\def\S{\Sigma}
\def\vn1{\overrightarrow{n-1}}
\def\im{{\rm im}}
\def\gl{{\rm gl}}
\def\sl{{\rm sl}}
\def\mS{\mathbb{S}}
\def\mJ{\mathbb{J}}
\def\mI{\mathbb{I}}
\def\ann{{\rm ann}}
\def\lann{{\rm l.ann}}
\def\rann{{\rm r.ann}}
\def\bM{\overline{M}}
\def\K1{{\rm K}_1}
\def\hmI1{\widehat{\mI_1}}
\def\tmI1{\widetilde{\mI_1}}
\def\tmJ1{\widetilde{\mJ_1}}
\def\hB1{\widehat{B_1}}
\def\hCB1{\widehat{\CB_1}}
\def\Den{{\rm Den}}
\def\Ore{{\rm Ore}}
\def\Den{{\rm Den}}
\def\Loc{{\rm Loc}}
\def\maxDen{{\rm max.Den}}
\def\ga{\mathfrak{a}}
\def\tor{{\rm tor}}
\def \S{\mathcal{S}}
\def\sl2{\mathfrak{sl}_2}
\def\Irr{{\rm Irr}}
\def\sl2{\mathfrak{sl}_2}
\def\gl2{\mathfrak{gl}_2}
\def\Irr{{\rm Irr}}
\def\b1{\overline{1}}
\def\mR{\mathbb{R}}
\def\Irr{{\rm Irr}}
\def\gl{{\mathfrak{l}}}
\def\Mor{{\rm Mor}}
\def\Ob{{\rm Ob}}
\def\pr{{\rm pr}}
\newenvironment{proof*}[1][\proofname]{\par
  \pushQED{\qed}%
  \normalfont \partopsep=\z@skip \topsep=\z@skip
  \trivlist
  \item[\hskip\labelsep
        \itshape
    #1\@addpunct{.}]\ignorespaces
}{%
  \popQED\endtrivlist\@endpefalse
}
\begin{document}

\author{V. V. \  Bavula  
}
\title{Embeddings of semiprime  rings  into  semisimple Artinian  rings}

\maketitle

\begin{abstract}
Goldie's Theorem implies that a semiprime left Goldie ring is embeddable into a semisimple Artinian ring. On the other hand, there are domains that are not embeddable into  division rings. A criterion for a semiprime ring being embeddable into a semisimple Artinian ring is given. Three types of embeddings of semiprime rings into semisimple Artinian rings are introduced and studied: the elementary, natural and collection embeddings. It is proven that for each embedding there is an elementary embedding which is smaller and  explicitly constructed. In particular, the  minimal embeddings are elementary ones. It is proven that a semiprime ring and all its localizations at regular left Ore sets have the `same' sets of elementary embeddings. A new  criterion for the left quotient ring of a ring being a semisimple Artinian ring is given in terms of the centre of the ring which is completely different from Goldie's Theorem (which is historically the first criterion, \cite{Goldie-PLMS-1960}). It is proven that the minimal embeddings in each  Morita equivalence class of embeddings  is a {\em non-empty} set and they are explicitly described. Descriptions of the minimal embeddings of semiprime left Goldie rings and all their localizations at regular left Ore sets are described.

$\noindent$

{\em Key Words: monomorphism, semiprime ring, prime ring, semiprimary ring, semisimple Artinian ring,  natural embedding, elementary embedding, localization, classical left  quotient ring, largest left quotient ring, Ore set, denominator set, order, maximal order. }

{\em Mathematics subject classification 2020:    16P20, 16U20,  16P50,  16P60, 16S85, 16U60.}    

{ \small \tableofcontents}
\end{abstract}


\section{Introduction} \label{INTR} 

{\bf Notation.} In this paper, module means a left module, and the following notation is fixed:
\begin{itemize}
\item  $R$ is a ring  with  1, $R^\times$ is its group of units, 
$Z(R)$ is the centre of $R$, 
 $\gn_R$ is the  prime  radical and $\rad (R)$ is the  radical  of $R$,  and $\min (R)$ is the  set  of minimal  primes of $R$;

\item $\Spec (R)$ is the prime spectrum and $\Spec_c (R)$ is the completely prime spectrum  of $R$; 

 \item $\Den_l(R, \ga )$ is the set of left denominator sets $S$ of $R$ with $\ass_l (S)=\ga$ where $\ga$ is an ideal of $R$ and $\ass_l (S):= \{r\in R\, | \, sr=0$ for some $s\in S\}$;
 
\item For $S\in \Den_l(R, \ga)$, $\min (R, S):=\{ \gp\in \min (R)\, | \, \gp\cap S=\emptyset\}$ and $\min (R, S, \id):=\{ \gp\in \min (R)\, | \, S^{-1}\gp S^{-1}R\neq S^{-1}R\}$;
 

\item   $\CC_R$  is the  set of  regular  elements of the ring $R$ (i.e. $\CC_R$ is the  set of non-zero-divisors of the ring $R$);

\item   $Q_{l,cl}(R):= \CC_R^{-1}R$  is the  {\em left quotient ring}   (the {\em  classical left   ring of fractions}) of the ring $R$ (if  it  exists);


\item $S_l(R)=S_{l,0}(R)$ is the largest element of the poset
$(\Den_l(R, 0), \subseteq )$ and $Q_l(R):=S_0^{-1}R$ is the
largest left quotient ring of $R$, \cite[Theorem 2.1.(2)]{larglquot}; 

\item For a   commutative ring $R$  and $\gp \in \Spec (R)$,   $R_\gp:=\Big(R\backslash \gp \Big)^{-1}R$ is the localization of $R$ at the prime ideal $\gp$; 

\item $\mS$ and $\mS_f$ are  the categories  of semisimple Artinian rings and finite dimensional semisimple algebras (over a fixed field), respectively, and  $\mathcal{S}$ is the category of semiprimary rings.


       
\end{itemize}

A ring $R$ is called a {\bf semiprimary ring} if its radical $\rad (R)$ is nilpotent ideal and the factor ring $R/\rad (R)$ is a semisimple Artinian ring. 
\begin{equation}\label{4-CLA}
\begin{split}
\{ {\rm Simple\; Artinian\; rings} \}&\subset \{ {\rm Semisimple\; Artinian\; rings} \}\subset \{ {\rm One-sided\; Artinian\; rings} \}\\
&\subset \{ {\rm Semiprimary\; rings} \}.
\end{split}
\end{equation}
The four categories of rings above are denoted
respectively by $\mathfrak{S}\subseteq \mathbb{S}\subseteq\mathbb{A}\subseteq  \mathcal{S}$. The category $\mS_f$  of finite dimensional semisimple algebras (over a fixed field) is a  subcategory of $\mS$.\\

{\bf Semiprime rings with finitely many minimal primes.} The ideal $$\gn_R:=\bigcap_{P\in \Spec (R)}P=\bigcap_{\gp\in \min(R)}\gp$$ is called the {\bf prime radical} of $R$. The ring $R$ is  a  semiprime ring iff $\gn_R=0$. An ideal $\ga$ of $R$ is called a {\bf semiprime ideal} if the factor ring $R/\ga$ is a semiprime ring. An ideal $\ga$ of $R$ is a semiprime ideall iff it is an intersection of prime ideals.

Prime and semiprime rings are large and important classes of rings both in commutative and non-commutative algebra. They also have geometric flavour. The algebra of regular functions on an affine algebraic variety is a semiprime algebra and the corresponding  ideal of definition of the variety is a semiprime ideal. If, in addition, the variery is irreducible that the algebra of regular functions is a prime algebra and the ideal of definition is a prime ideal.

\begin{itemize}

\item (Corollary \ref{a13Sep23}) {\em A semiprime ring with infinitely many minimal primes cannot be embedded into a semisimple Artinian ring.}

\item  (Corollary \ref{b13Sep23}) {\em A semiprime ring with infinitely many minimal primes cannot be embedded into a semiprimary ring. }

\end{itemize}

So, for a semiprime ring $R$ to be embeddable into a semisimle Artinian ring or into a semiprimary ring the condition $|\min (R)|<\infty$ is a necessary condition (but not sufficient, in general). Therefore, the condition $|\min (R)|<\infty$ is present in all results on embeddability.   The set of minimal primes of a ring is a very  important set as far the  spectrum of a ring is concerned as every prime contains a minimal prime. So, knowing the minimal primes is the first (important and difficult) step in describing the spectrum. 
In the algebraic geometry, the minimal primes of the algebra of regular functions on an algebraic variety determine/correspond to the   irreducible components of the variety.

In \cite{MinPrimeLocRings},
  several descriptions of the set of minimal prime ideals of localizations of rings $R$ with  $\min (R)|<\infty$ and satisfying some  natural assumptions  are obtained. In particular, the following case are considered: a  localization of a semiprime ring with finite set of minimal primes; a  localization of a prime rich ring  where the localization respects the ideal structure of primes and primeness of certain minimal primes; a localization of a ring at a left denominator set generated by normal elements, and others. As an application, for a semiprime ring with finitely many minimal primes, a description of the minimal primes of it's largest left/right quotient ring is obtained. We use several results of \cite{MinPrimeLocRings}, see Theorem \ref{A10Sep23}.\\

{\bf The category $\CM_{R,\mS}$ and the family of left $R$-collections $\CV_l(R)$.} 
At the beginning of Section \ref{EMBEDSPRIME}, for a ring $R$, we introduce and study the monomorphism category $\CM_{R, \mS}$. The objects of $\CM_{R, \mS}$ are all the monomorphisms from $R$ into semisimple Artinian rings and morphisms are defined by commutativity of obvious triangles.  A natural binary relation of  `smaller monomorphism' is introduced. Also there is an additional/refined definition of smallness (associated with sizes of matrices of semisimple Artinian rings). 

A  monomorphism $R\stackrel{f}{\ra}\prod_{i=1}^sA_i\in \CM_{R,\mS}$ is called {\bf redundant} if one of the direct multiplies can be omitted and {\bf irredundant} otherwise. Let $\Irr \,\CM_{R, \mS}$ be the  family of irredundant $R$-monomorphisms. 

One of the main concepts of the paper is `a left $R$-collection.' Intuitively, left $R$-collection is  a concept and a very efficient tool for producing  embeddings of a  semiprime ring $R$ with $|\min (R)|<\infty$ into semisimple Artinian rings. Furthermore, for a given embedding,   we can construct a smaller $R$-embedding by using left $R$-collections and we can construct  explicit proper epimorphisms/monomorphisms between the targets to demonstrate the  `smallness', see Theorem \ref{B9Sep23}.(2).

Suppose that  $R$ is  a semiprime ring with $|\min (R)|<\infty$ and 
$
A=\prod_{\gp \in \min (R)}A(\gp)
$
 is a semisimple Artinian ring where $A(\gp)$ are simple Artinian rings. 
 A set
 $$
V=\{ V(\gp)\, | \, \gp \in \min (R)\}
$$
 is called a {\bf left $R$-collection} if for every $\gp \in \min (R)$, 
 \begin{itemize}
 \item $V(\gp)$ is an simple $(R,A(\gp ))$-bimodule,
 \item  $l_\gp :=l(V(\gp)_{A(\gp )})<\infty$, and 
 \item $\gp = \ann_R(V(\gp))$.
 \end{itemize}
  Let $\CV_l(R)$ be a family of all left $R$-collections (for all possible rings $A$). For the  left $R$-collection $V$, there is an explicit monomorphism
$$\phi_V: R\ra A_V:=\End(V_A)=\prod_{\gp\in \min (R)}A_V(\gp)\in \CM_{R,\mS},\;\; {\rm  where}\;\; A_V(\gp):=\End(V(\gp) _{A(\gp)}) $$
are simple Artinian rings. The monomorphism $\phi_V$  is called the {\bf monomorphism/embedding of the collection} $V$ and all such monomorphisms are called {\bf collection $R$-monomorphisms}. Let $\CM_{R, \mS}^c$ be the class of  collection $R$-monomorphisms. 
 Proposition \ref{A14Sep23}  and Lemma \ref{b14Sep23} contain properties of left collections $V\in \CV_l(R)$, the rings  $A=\prod_{\gp \in \min (R)}A(\gp)$ and  $A_V$,    and the monomorphisms $\phi_V$, and other objects that are connected with left $R$-collections.

Suppose that $R$ is a semiprime ring with  $|\min (R)|<\infty$.  We say that an $R$-monomorphism $f: R\ra A=\prod_{\gp \in \min (R)}A(\gp)\in  \CM_{R, \mS}$, where $A(\gp)$ are simple Artinnian rings,  is  {\bf natural}  if $$\lann_A(\gp)=\rann_A(\gp)=A(\gp)\;\; {\rm for\; all}\;\;\gp \in \min (R).$$ Let $\CN_{R, \mS}$ be the class of all natural $R$-monomorphisms.

 A natural  embedding $R\stackrel{f}{\ra}A=\prod_{\gp \in \min (R)}A(\gp)\in \CN_{R,\mS}$ is called an {\bf elementary embedding} if
${}_R{A(\gp)}_{A(\gp)}$ is a simple bimodule for all $\gp \in \min (R)$. The class of  elementary embedding is denoted by $\CN_{R, \mS}^e$.
Every elementary embedding $R\stackrel{f}{\ra}A=\prod_{\gp \in \min (R)}A(\gp)\in \CN_{R,\mS}^e$ is a collection embedding, 
$f=\phi_{V_f}$,
for the left $R$-collection $V_f:=\{ V_f(\gp ):= A(\gp)\, | \, \gp \in  \min (R)\}\in \CV_l(R)$ since $\End (A(\gp)_{A(\gp)})=A(\gp)$ for all $\gp \in\min (R)$. The relations between the concepts are given below
$$\CN_{R, \mS}^e\subseteq \CM_{R, \mS}^c\subseteq \CN_{R, \mS}\subseteq \Irr \CM_{R, \mS}$$
Proposition \ref{a16Sep23} describes properties of natural monomorphisms.

Theorem \ref{B9Sep23} is one of the key results of the paper. It captures the essence of the structure of embeddings $\CM_{R, \mS}$. It reveals that left $R$-collections play  the central role in understanding of the structure of embeddings $\CM_{R, \mS}$. It shows that for each  embedding in $f\in \CM_{R, \mS}$ there is a smaller embedding which is  a collection embedding $\phi_V$, and it produces {\em explicit} morphisms in $\Mor (\CM_{R, \mS})$ that proves the smallness.  So, the proof is `constructive'. 

Several corollaries of Theorem \ref{B9Sep23} are obtained (Corollary \ref{a14Sep23}, Corollary \ref{a9Sep23} and Corollary \ref{b16Sep23}).

Suppose that $R$ is a semiprime ring with  $|\min (R)|<\infty$.  We say that natural $R$-monomorphisms $f\ra A=\prod_{\gp \in \min (R)}A(\gp)$  and $f'\ra A'=\prod_{\gp \in \min (R)}A'(\gp)$ are  {\bf $M$-equivalent} and write $f\stackrel{M}{\sim}f'$   if the rings  $A(\gp)$ and $A'(\gp)$ are Morita equivalent  for all $\gp \in \min (R)$. Let $[f]_M^\sim$ be the $M$-equivalence class of the element $f$ in $\CN_{R, \mS}$.

Theorem \ref{15Sep23} describes minimal embeddings in each $M$-equivalence class of embeddings. As a corollary, we have the equalities (Corollary \ref{e15Sep23})
 $$\min \CM_{R,\mS}=\min \CN_{R,\mS}=\min \CN_{R,\mS}^e.$$

The category $\mS_f$  of finite dimensional semisimple algebras (over the fixed ground field) is  closed under the Morita equivalence. 
 For category  $\mS_f$  all the previous results hold and they can be strengthened, see Corollary 
\ref{b15Sep23} and Corollary \ref{c15Sep23}.\\

{\bf  Criterion for a semiprime ring being embeddable into  semisimple Artinian ring.} 
At the beginning of Section \ref{LLQRINGS}, we collect results on localizations and the largest left quotient ring that are used in proofs of the next two sections. For a semiprime ring $R$ with finitely many minimal primes, Theorem \ref{A21Sep23} (resp.,  Corollary \ref{aA21Sep23}) shows that the ring $R$ and all its left localizations at regular left  (resp., right ) denominator sets  have the same elementary embeddings. So, in order  to describe all elementary embeddings  for these rings it suffices to do it for any of them. As localizations make the structure of rings `simpler' it suffices to describe the elementary embeddings for the largest left (resp., right) quotient ring of the ring. 

Theorem \ref{B21Sep23} describes the minimal embeddings for all semiprime left Goldie rings and their localizations.

Theorem \ref{21Sep23} is a criterion for a semiprime ring $R$ being embeddable into  semisimple Artinian   rings. 

\begin{theorem}\label{21Sep23}
Let $R$ be a semiprime ring. The following statements are equivalent:

\begin{enumerate}
\item $\CM_{R, \mS}\neq \emptyset$.
\item  $|\min (R)|<\infty$ and the ring $Q_l(R)$ is embeddable into a semisimple Artinian ring.
\item  $|\min (R)|<\infty$ and the ring $Q_l(R/\gp)$ is embeddable into a simple Artinian ring for every $\gp \in\min (R)$.
\item  $|\min (R)|<\infty$ and the ring $Q_r(R)$ is embeddable into a semisimple Artinian ring.
\item $|\min (R)|<\infty$ and the ring $Q_r(R/\gp)$ is embeddable into a simple Artinian ring for every $\gp \in \min (R)$.
\end{enumerate}
\end{theorem}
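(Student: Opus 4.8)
The plan is to prove the chain of equivalences by first reducing everything to the single implication $(1)\Rightarrow$`$|\min(R)|<\infty$', then exploiting the decomposition of a semiprime ring with finitely many minimal primes as a subdirect product $R\hookrightarrow \prod_{\gp\in\min(R)}R/\gp$, and finally using the localization results (Theorem \ref{A21Sep23}, Corollary \ref{aA21Sep23}) to pass between $R$, $Q_l(R)$ and $Q_r(R)$. The necessity of $|\min(R)|<\infty$ in every statement is exactly Corollary \ref{a13Sep23}: if $R$ embeds into a semisimple Artinian ring then $|\min(R)|<\infty$, and the same bound is forced on $Q_l(R)$, $Q_r(R)$, and each $Q_l(R/\gp)$, $Q_r(R/\gp)$ because these are themselves (if they exist) semiprime rings receiving an embedding into a semisimple Artinian ring in the presence of (1)–(5). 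So throughout I may assume $|\min(R)|<\infty$ and concentrate on the embeddability clauses.

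First I would prove $(1)\Leftrightarrow(3)$, which is the conceptual heart. Assume (1): by Theorem \ref{B9Sep23} any embedding in $\CM_{R,\mS}$ dominates a collection embedding $\phi_V$ for some $V\in\CV_l(R)$, and by the relations $\CN_{R,\mS}^e\subseteq\CM_{R,\mS}^c\subseteq\CN_{R,\mS}$ together with the analysis of natural embeddings (Proposition \ref{a16Sep23}) the target splits as $A=\prod_{\gp\in\min(R)}A(\gp)$ with $\gp=\ann_R(V(\gp))$ and $A(\gp)$ simple Artinian. Composing $\phi_V$ with the projection onto the $\gp$-component and passing to $R/\gp$ (which acts faithfully on the simple $(R,A(\gp))$-bimodule $V(\gp)$ since $\gp=\ann_R V(\gp)$) yields an embedding $R/\gp\hookrightarrow A_V(\gp)=\End(V(\gp)_{A(\gp)})$, a simple Artinian ring; as $R/\gp$ is a prime ring with a regular set of non-zero-divisors mapping to units of a simple Artinian ring, its classical (hence largest) left quotient ring $Q_l(R/\gp)$ embeds into the same simple Artinian ring by the universal property of localization — here one must check that $\CC_{R/\gp}$ maps into units, which follows because a regular element of $R/\gp$ becomes a non-zero-divisor in the simple Artinian ring $A_V(\gp)$ and non-zero-divisors of a (right or left) Artinian ring are units. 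Conversely, assume (3): for each $\gp$ fix an embedding $Q_l(R/\gp)\hookrightarrow B(\gp)$ into a simple Artinian ring; then $R\hookrightarrow\prod_\gp R/\gp\hookrightarrow\prod_\gp Q_l(R/\gp)\hookrightarrow\prod_\gp B(\gp)$, and the product of simple Artinian rings is semisimple Artinian, giving an element of $\CM_{R,\mS}$ — here the first arrow is injective precisely because $\gn_R=\bigcap_{\gp\in\min(R)}\gp=0$.

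Next, $(1)\Leftrightarrow(2)$. The implication $(2)\Rightarrow(1)$ is immediate: if $Q_l(R)$ embeds into a semisimple Artinian ring $A$ then, since $R\hookrightarrow Q_l(R)=S_0^{-1}R$ canonically (the map is injective because $\ass_l(S_0)=0$), the composite $R\hookrightarrow A$ lies in $\CM_{R,\mS}$ once we know $|\min(R)|<\infty$. For $(1)\Rightarrow(2)$ I would invoke Theorem \ref{A21Sep23}: $R$ and $Q_l(R)$ have the same elementary embeddings, and by Corollary \ref{e15Sep23} $\min\CM_{R,\mS}=\min\CN_{R,\mS}^e$ is non-empty whenever $\CM_{R,\mS}\neq\emptyset$; so an elementary embedding of $R$ transports to one of $Q_l(R)$, in particular $\CM_{Q_l(R),\mS}\neq\emptyset$, i.e. $Q_l(R)$ is embeddable into a semisimple Artinian ring. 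Finally $(1)\Leftrightarrow(4)$ and $(4)\Leftrightarrow(5)$ follow by the identical arguments applied on the right, using Corollary \ref{aA21Sep23} in place of Theorem \ref{A21Sep23}, together with the left–right symmetry of the notion of semiprime ring and of $|\min(R)|<\infty$; alternatively one notes that (3) and (5) are each equivalent to the same intrinsic condition on the prime rings $R/\gp$ once one knows (to be recorded as a sublemma) that for a prime ring $R/\gp$ with $|\min(R/\gp)|=1$ the ring $Q_l(R/\gp)$ is embeddable into a simple Artinian ring if and only if $Q_r(R/\gp)$ is.

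The main obstacle I anticipate is the careful bookkeeping at the level of a single minimal prime: one must verify that regular elements of $R/\gp$ remain regular (hence invertible) in the simple Artinian target, so that the universal property of $Q_l(R/\gp)=\CC_{R/\gp}^{-1}(R/\gp)$ genuinely applies and the localization does not collapse. This uses that $R/\gp$ is prime, that a simple bimodule $V(\gp)$ is faithful and finite-dimensional over $A(\gp)$, and the elementary fact that non-zero-divisors in an Artinian ring are units; packaging this cleanly — ideally as the sublemma mentioned above, stated once and reused on both sides — is where the real work lies, the rest being assembly of already-established machinery (Corollary \ref{a13Sep23}, Theorem \ref{B9Sep23}, Theorem \ref{A21Sep23}, Corollary \ref{aA21Sep23}, Corollary \ref{e15Sep23}).
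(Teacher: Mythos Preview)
Your overall architecture matches the paper's proof: reduce to elementary embeddings via Theorem~\ref{B9Sep23} and Theorem~\ref{15Sep23}, then push localizations through using Theorem~\ref{A21Sep23} (and its right-handed version). The implication $(3)\Rightarrow(1)$ and the left--right symmetry arguments are fine.

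There is, however, a genuine gap in your $(1)\Rightarrow(3)$. You write $Q_l(R/\gp)=\CC_{R/\gp}^{-1}(R/\gp)$ and then set out to show that every regular element of $R/\gp$ becomes a non-zero-divisor in the simple Artinian target. Both the identification and the argument are wrong. By definition $Q_l(R/\gp)=S_l(R/\gp)^{-1}(R/\gp)$, where $S_l(R/\gp)$ is the \emph{largest regular left Ore set}, which in general is strictly smaller than $\CC_{R/\gp}$ (the paper emphasises this distinction; e.g.\ for $\mI_1$ the classical quotient ring does not exist). Your proposed sublemma --- ``a regular element of $R/\gp$ becomes a non-zero-divisor in $A_V(\gp)$'' --- is not justified: the kernel $\{a\in A(\gp):ca=0\}$ is a right $A(\gp)$-submodule but need not be a left $R/\gp$-submodule, so bimodule simplicity gives you nothing. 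The reason the argument in Theorem~\ref{A21Sep23}.(1) works is precisely that for $S\in\Den_l(R/\gp,0)$ the \emph{left Ore condition} makes $\tor_S(A(\gp))$ an $(R/\gp,A(\gp))$-subbimodule, which then must vanish by simplicity.

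The fix is immediate and is exactly what the paper does (packaged as Proposition~\ref{A22Sep23}): once you have an elementary embedding $R/\gp\hookrightarrow A(\gp)$, apply Theorem~\ref{A21Sep23}.(1) with $S=S_l(R/\gp)\in\Den_l(R/\gp,0)$ to conclude $S_l(R/\gp)\subseteq A(\gp)^\times$, and then the universal property of $Q_l(R/\gp)$ gives the embedding. No statement about all of $\CC_{R/\gp}$ is needed. A minor related point: your appeal to Corollary~\ref{e15Sep23} to produce an elementary embedding is a misattribution --- that corollary only equates sets of minimal objects and does not assert non-emptiness; use Theorem~\ref{15Sep23}.(1) instead.
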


The proof of Theorem \ref{21Sep23} is given in Section \ref{LLQRINGS}.\\

{\bf New criterion for a ring to have a semisimple Artinian quotient ring.}
A criterion for a ring to have a simple Artinian left quotient ring was given by Goldie (\cite{Goldie-PLMS-1958},  1958) and Lesieur and Croisot (\cite{Lesieur-Croisot-1959}, 1959). Goldie's Theorem (\cite{Goldie-PLMS-1960}, 1960)  is a criterion for a ring to have a semisimple Artinian left quotient ring.  Goldie's Theorem states that {\em a ring has a  semisimple left quotient ring iff it is a semiprime left Goldie ring}. A ring is called a {\em left Goldie ring} if it satisfies the ascending chain condition on left annihilators and does not contain infinite direct sums of nonzero left ideals. In \cite{NewCrit-S-Simpl-lQuot}, four new criteria are given that are based on a completely different approach and new ideas. The first one is based on the fact that {\em for an  arbitrary ring $R$ the set $\CM := \maxDen_l(R)$ of maximal left denominator sets of $R$ is a non-empty set},  \cite[Lemma 3.7.(2)]{larglquot}. The First Criterion is given via properties of the  set $\CM$.\\

{\bf Theorem (The First Criterion, \cite[Theorem 3.1]{NewCrit-S-Simpl-lQuot}}. {\em A ring $R$ has a semisimple left quotient ring $Q_{l,cl}(R)$ iff $\CM $ is a finite set, $\bigcap_{S\in \CM } \ass (S) =0$ and,  for each $S\in \CM$, the ring  $S^{-1}R$ is a simple left Artinian ring. In this case, $Q_{l,cl}(R)\simeq \prod_{S\in \CM} S^{-1}R$.  }\\

 The Second Criterion, \cite[Theorem 4.1]{NewCrit-S-Simpl-lQuot},  is given via the minimal primes of $R$ and goes further than the First one 
  in the sense that it describes  explicitly the maximal left denominator sets $S$ via the minimal primes of $R$. The Third Criterion, \cite[Theorem 5.1]{NewCrit-S-Simpl-lQuot},  is close to Goldie's Criterion but it is easier to check in applications (basically, it reduces Goldie's Theorem to the prime case). The Fourth Criterion, \cite[Theorem 6.2]{NewCrit-S-Simpl-lQuot},  is given via certain left denominator sets. As far as applications are concerned,  it is a very efficient tool in showing that the left quotient ring is a semisimple Artinian  ring provided that the original ring admits sufficiently (finitely)  many `nice' localizations.

In Section \ref{NEWCRIT-THMG},  a new  criterion for the left quotient ring of a ring to be a semisimple Artinian ring is given (Theorem \ref{VVB-23Sep2023}). The criterion 
answers the following question: \\

$\bullet$ {\em Find necessary and and sufficient conditions on the centre of a semiprime ring that guarantee that the left quotient ring of a ring is a semisimple Artinian ring.}

\begin{theorem}\label{VVB-23Sep2023}
Let $R$ be a ring and $Z(R)$ be its centre. Then the  following statements are equivalent:
\begin{enumerate}

\item The ring $Q_{l,cl}(R)$ is a semisimple Artinian ring.

\item The ring $R$ is a semiprime ring, $\CC_{Z(R)}\subseteq \CC_R$,  $|\min (Z(R))|<\infty$ and for each $\gq\in   \min (Z(R))$, the ring $R_\gq$ is a left Goldie ring.

\end{enumerate}
If one of the equivalent conditions holds then $Q_{l,cl}(R)\simeq\prod_{\gq\in \min   (Z(R))}Q_{l,cl}(R_{\gq})$, $Q_{l,cl}(R_{\gq})\simeq Q_{l,cl}(R)_{\gq}$ for all $\gq\in \min (Z(R))$, and the map $\min (R)\ra \min (Z(R))$, $\gp\mapsto \gp^{\rm r}:=\gp\cap Z(R)$ is a surjection.
\end{theorem}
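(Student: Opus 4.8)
The plan is to prove $(2)\Rightarrow(1)$ by localizing at the centre and reducing to Goldie's Theorem applied to each localized piece, and $(1)\Rightarrow(2)$ by running the same dictionary backwards, extracting the centre-theoretic data from a known semisimple quotient ring. I begin with $(2)\Rightarrow(1)$. Let $Z=Z(R)$, write $\min(Z)=\{\gq_1,\dots,\gq_m\}$, and consider the central denominator set $T=\CC_Z$. Since $\CC_Z\subseteq\CC_R$ by hypothesis, $T$ is a (two-sided, since central) denominator set in $R$, so $T^{-1}R$ exists and embeds in $Q_{l,cl}(R)$; moreover $T^{-1}R$ is a semiprime ring with centre $T^{-1}Z$, which is a finite direct product of the fields $K_i:=\Frac(Z/\gq_i)$ because $\min(Z)$ is finite. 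A finite product of fields in the centre forces a ring decomposition $T^{-1}R\simeq\prod_{i=1}^m R^{(i)}$ where $R^{(i)}$ is the localization $R_{\gq_i}$, using the central idempotents of $T^{-1}Z$; note $R^{(i)}$ is semiprime (a quotient/localization of the semiprime $T^{-1}R$) and by hypothesis is left Goldie. Now apply Goldie's Theorem to each $R^{(i)}$: $Q_{l,cl}(R^{(i)})$ is semisimple Artinian. The remaining point is that $Q_{l,cl}(R)\simeq\prod_i Q_{l,cl}(R^{(i)})$: one shows $\CC_R$ maps into $\CC_{T^{-1}R}=\prod_i\CC_{R^{(i)}}$ and that inverting $\CC_R$ over $T^{-1}R$ is the same as inverting each $\CC_{R^{(i)}}$ coordinatewise, which gives $Q_{l,cl}(R)=\CC_R^{-1}R\simeq\prod_i Q_{l,cl}(R_{\gq_i})$, a semisimple Artinian ring. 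This simultaneously yields the displayed isomorphism and $Q_{l,cl}(R_\gq)\simeq Q_{l,cl}(R)_\gq$.

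For $(1)\Rightarrow(2)$: if $Q:=Q_{l,cl}(R)$ is semisimple Artinian, then $R$ embeds in $Q$, hence $R$ is semiprime (a subring of a semisimple Artinian ring has zero prime radical). By Goldie's Theorem $R$ is a semiprime left Goldie ring, so in particular $|\min(R)|<\infty$. Write $Q\simeq\prod_{j=1}^n Q_j$ with $Q_j$ simple Artinian; the minimal primes of $R$ are exactly the contractions of the $n$ maximal ideals $\widehat{Q_j}:=\prod_{k\ne j}Q_k$ of $Q$, and this is the standard correspondence $\min(R)\leftrightarrow\{1,\dots,n\}$. The centre $Z(Q)\simeq\prod_{j=1}^n Z(Q_j)$ is a finite product of fields. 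Now $Z=Z(R)=R\cap Z(Q)$, and since every element of $\CC_Z$ is regular in $R$ one checks it is regular in $Q$ hence a unit in $Q$, so $\CC_Z\subseteq\CC_R$ and $Z$ embeds into the semisimple commutative Artinian ring $Z(Q)$; therefore $Z$ is a commutative semiprime ring with $\CC_Z^{-1}Z=Z(Q)$ a finite product of fields, which forces $|\min(Z)|<\infty$ (indeed $\min(Z)$ is in bijection with the factors surviving in $\CC_Z^{-1}Z$, using that every minimal prime of $Z$ is disjoint from $\CC_Z$ — this last point needs the standard fact that in a commutative semiprime ring a minimal prime meets $\CC_Z$ only if it is not an associated prime, but it never is since there are no nonzero nilpotents). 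Finally for $\gq\in\min(Z)$, $R_\gq$ is a localization of $R$ at a central Ore set of regular elements, hence is an order in the corresponding product of the $Q_j$'s, so by Goldie's Theorem $R_\gq$ is left Goldie. The surjectivity of $\gp\mapsto\gp\cap Z$ from $\min(R)$ to $\min(Z)$ follows because lying over holds for the central extension $Z\subseteq R$ (going through $Q$: $\min(R)$ matches the $n$ factors of $Q$, $\min(Z)$ the factors of $Z(Q)=\prod Z(Q_j)$, and each $Z(Q_j)$ is the centre of the simple factor $Q_j$, so the factor map on the level of simple components induces the claimed surjection).

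The main obstacle, I expect, is the bookkeeping around the decomposition $T^{-1}R\simeq\prod_i R_{\gq_i}$ and the interchange of the two localizations (first at $\CC_Z$, then at the image of $\CC_R$). One must verify carefully that localizing the semiprime ring $R$ at $\CC_Z$ does not collapse anything (this is exactly where $\CC_Z\subseteq\CC_R$ is used, guaranteeing $\ass_l(\CC_Z)=0$), that the central idempotents of $\CC_Z^{-1}Z$ genuinely split $T^{-1}R$ into the pieces $R_{\gq_i}$ rather than some coarser decomposition, and that $\CC_{T^{-1}R}=\prod_i\CC_{R_{\gq_i}}$ so that the iterated localization really reassembles $Q_{l,cl}(R)$. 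The results collected at the start of Section~\ref{LLQRINGS} on localizations and largest left quotient rings, together with the description of minimal primes of localizations from \cite{MinPrimeLocRings} (Theorem~\ref{A10Sep23}), should supply exactly these compatibility statements; and in the semiprime left Goldie setting $R_\gq$ being left Goldie is automatic once one knows $R$ is, since left Goldie-ness passes to localizations at regular Ore sets. Everything else reduces to Goldie's Theorem applied componentwise.
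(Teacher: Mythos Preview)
Your $(2\Rightarrow 1)$ argument is essentially the paper's: localize at the central regular set $\CC_{Z(R)}$, split via the central idempotents of $Q(Z(R))$, apply Goldie to each factor $R_{\gq}$, then reassemble. The paper packages the reassembly step as Proposition~\ref{A24Sep23} together with \cite[Lemma~3.3.(1)]{larglquot} and Theorem~\ref{5Jul10}, which is exactly the ``interchange of two localizations'' you flag as the main technical obstacle; so that half is fine.

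The $(1\Rightarrow 2)$ direction has a genuine error. You assert $\CC_Z^{-1}Z=Z(Q)$ and use this to deduce $|\min(Z)|<\infty$. This equality is false in general: the paper's own example in Lemma~\ref{b23Sep23} has $Z(R)=K$ (a field, so $\CC_Z^{-1}Z=K$) while $Q(R)\simeq Q(L)^n$ has centre containing $K^n$, so for $n\geq 2$ the total quotient ring of $Z(R)$ is strictly smaller than $Z(Q)$. What survives is only the \emph{embedding} $\CC_Z^{-1}Z\hookrightarrow Z(Q)$ (once $\CC_Z\subseteq\CC_R$ is known), and one can extract finiteness of $\min(Z)$ from that embedding alone---but that is not what you wrote. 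The paper avoids this entirely and proves $|\min(Z(R))|<\infty$ by a uniform-dimension argument: assuming infinitely many minimal primes $\gq_1,\gq_2,\dots$ of $Z(R)$, it localizes $Q_{l,cl}(R)$ at $S_n=Z(R)\setminus\bigcup_{i\leq n}\gq_i$ to produce $n$ orthogonal central idempotents, contradicting the bounded left uniform dimension of $Q_{l,cl}(R)$.

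A second, smaller gap: your justification of $\CC_Z\subseteq\CC_R$ is circular as written (``since every element of $\CC_Z$ is regular in $R$ \dots\ so $\CC_Z\subseteq\CC_R$''). This needs an actual argument; note that the paper's proof is also terse on this point, but you should not inherit that silence.
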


For a ring $R$  such that its centre $Z(R)=\prod_{i=1}^nK_i$ is a finite direct product of fields $K_i$, Corollary \ref{B23Sep23} is a criterion for the ring $Q_{l,cl}(R)$ being  a semisimple Artinian ring.

 In general, for a ring $R$, the classical left quotient ring $Q_{l,cl}:=\CC_R^{-1}R$ does not exists (e.g., the algebras of polynomial  integro-differential operators)  but for each ring $R$ there is the largest left Ore set  $S_l(R)$ of regular elements of $R$ and the ring $Q_l(R):=S_l(R)^{-1}R$ is called the the largest left quotient ring, \cite{Bav-intdifline, larglquot}.


\section{Embeddings of semiprime  rings  into  semisimple Artinian or  semiprimary rings.} \label{EMBEDSPRIME} 

In this section, three types of embeddings of semiprime rings into semisimple Artinian  rings, the elementary, natural and collection embeddings,  are  studied. The main tool is the concept of left $R$-collections. \\

{\bf The monomorphism/embedding category $\CM_{R, \CA}$.} 

\begin{definition} Let $R$ be a ring and $\CA$ be a class of rings. The {\bf monomorphism/embedding category} $\CM_{R, \CA}$ of the ring $R$ into $\CA$  is a category where the  objects $\Ob (\CM_{R, \CA})$ of  $\CM_{R, \CA}$ contains  all the   monomorphisms $R\stackrel{f}{\ra}A$ where $A\in \CA$ and the set of morphisms $\CM_{R, \CA} (f,g)$ between 
$R\stackrel{f}{\ra}A$ and $R\stackrel{g}{\ra}B$
are all possible homomorphisms $\alpha : A\ra B$ such that $g=\alpha f$. For a monomorphism $R\stackrel{f}{\ra}A$,  the ring $A$ is called the {\bf target} of $f$ and is denoted by $t(f)$.
\end{definition}

For each $R\stackrel{f}{\ra}A\in \CM_{R, \CA}$,
the identity map $\id_A:A\ra A$, $a\mapsto a$ is the identity of the endomorphism ring $\CM_{R, \CA}(f,f)\simeq \End_R(A)$. Two monomorhisms/objects  $R\stackrel{f}{\ra}A$ and $R\stackrel{g}{\ra}B$  of $\CM_{R, \CA}$ are isomorphic iff the set $\CM_{R, \CA} (f,g)$ contains an isomorphism iff the set $\CM_{R, \CA} (g,f)$ contains an isomorphism. For a monomorphism $R\stackrel{f}{\ra}A$,  its isomorphism class is denoted by $[R\stackrel{f}{\ra}A]$ or $[f]$.

A subset $S'$ of a set $S$ is called a {\bf proper subset} of $S'$ if $S'\neq \emptyset , S$. 
Let $A$ and $B$ be rings. A ring epimorphism $\alpha :A\ra B$ which is not an isomorphism is called a {\bf proper epimorphism}. Similarly, a ring monmorphism $\alpha :A\ra B$ which is not an isomorphism is called a {\bf proper monomorphism}.\\

{\bf The partial order $\leq $ on $\Ob (\CM_R,\CA)$.} Let us consider a binary relation $\leq$ on the set of object $\Ob (\CM_{R,\CA})$: For two objects $f,g\in \Ob (\CM_{R,\CA})$, we write $f\leq g$ if either $f=\alpha g$ for some  $\alpha \in \CM_{R, \CA}(g,f)$ which is an epimorphism from $t(g)$ to $t(f)$ or $g=\beta f$  for some  $\beta \in \CM_{R, \CA}(f,g)$ which is a  monomorphism from $t(f)$ to $t(g)$. 
  We denote by `$\leq$' the minimal partial order  such that the binary relation $\leq$ generates. If $f\leq g$, we say that $f$ is {\em smaller} (or equal) to $g$. If $f\leq g$ but $g\not\leq f$, we say that $f$ is {\em strictly smaller}  that $g$ and write $f<g$.

Intuition behind this partial order is to
 compare two monomorphisms using their images. 
 Algebraically, there are essentially two different ways of how to construct a  `smaller' monomorphism   in one step from a given one, say $R\stackrel{f}{\ra}A$: either to take a proper subring in $A$ that belongs to the class $\CA$ and  contains the image $\im (f)$ of $f$ or to take the proper epimorphism of $A$ such that the kernel of it has zero intersection with $R$. So, for a given $f$ all smaller  $g$'s, i.e.  $g\leq f$,  are obtained from $f$ in several consecutive  steps as have just described.

\begin{definition}
An object  $f\in \Ob (\CM_{R,\CA})$ is called {\bf tiny} if for each object $g\in \Ob (\CM_{R,\CA})$ the sets of morphisms  $\Mor (f,g)$ and $\Mor (g,f)$ consists entirely of isomorphisms  provided that they are non-empty. 
\end{definition}

It is obvious that the class $\min \CM_{R,\CA}:=\min \Ob (\CM_{R,\CA})$ of minimal objects of the partially ordered set  $(\Ob (\CM_{R,\CA}), \leq)$ contains all the tiny objects.  In general, the set $\min \CM_{R,\CA}$ is an empty set unless some `artinian/finiteness conditions' hold for the  category $\CA$. 

\begin{example} Let $R=K[x]$ be a polynomial algebra in a variable $x$ over a field $K$ and $\CA = \{ R\}$. Then $\min \CM_{R, \CA}=\emptyset$.
\end{example}

\begin{example} Let $R$ be a semiprime left Goldie ring. By Goldie's Theorem, $Q_{l,cl}(R)$ is a semisimple Artinian ring. Therefore, $\CM_{R, \mS}\neq \emptyset$. Furthermore,
$R\stackrel{\s_R}{\ra}Q_{l,cl}(R)\in \min \CM_{R,  \mS}$ where $\s_R(r)=\frac{r}{1}$ for all $r\in R$, and so $\min \CM_{R, \mS}\neq \emptyset$ (Theorem \ref{B21Sep23}).
\end{example}

{\bf The algebras of polynomial  integro-differential operators $\mI_n$.}
 Let $K$ be a field of characteristic zero and 
 $$\mI_n=K\bigg\langle x_1, \ldots , x_n, \frac{\der}{\der x_1}, \ldots ,\frac{\der}{\der x_n}, \int_1 \ldots , \int_n  \bigg\rangle
 $$ be the {\bf algebra of polynomial  integro-differential operators}, see \cite{Bav-intdif-mIn, Bav-intdifline} for details where the algebras $\mI_n$  are introduced and studied. The algebra $\mI_n$ contains the {\bf Weyl algebra} $$A_n:=K\bigg\langle x_1, \ldots , x_n, \frac{\der}{\der x_1}, \ldots ,\frac{\der}{\der x_n}\bigg\rangle
 ,$$ i.e. the algebra of polynomial differential operators. The Weyl algebra $A_n$ is a  Noetherian domain. Hence, by Goldie's Theorem its classical left quotient ring $Q_{l,cl}(A_n)$ coincides with its classical right quotient ring $Q_{r,cl}(A_n)$ which are division algebras. Hence, the Weyl algebra $A_n\subseteq Q_{l,cl}(A_n)$ is embeddable into a semisimple Artinian ring. To the contrary, for the algebras $\mI_n$  neither left nor right classical quotient ring exists. Furthermore, the algebras $\mI_n$ are not embeddable into semisimple Artinian rings, see Lemma \ref{a17Sep23}.(2).

\begin{lemma}\label{a17Sep23}

\begin{enumerate}
\item If a ring $R$ contains an infinite set of orthogonal nonzero idempotents then $\CM_{R, \mS}=\emptyset$.
\item For all $n\geq 1$, $\CM_{\mI_n, \mS}=\emptyset$.
\item If a ring $R$ contains elements $x$ and $y$  such that $yx=1$ but $xy\neq 1$ then $\CM_{R, \mS}=\emptyset$.
\item If $\CM_{R, \mS}=\emptyset$ then $\CM_{R', \mS}=\emptyset$ for all rings $R'$ that contain the ring $R$.
\end{enumerate}
\end{lemma}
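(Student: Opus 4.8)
The plan is to prove each item essentially independently, relying on the structural fact that a semisimple Artinian ring $A$ is a finite direct product of matrix rings over division rings, hence has no infinite set of orthogonal nonzero idempotents and satisfies $xy=1 \Leftrightarrow yx=1$ (equivalently, $A$ is directly finite / Dedekind finite).

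For item (1): suppose $f\colon R\hookrightarrow A\in\CM_{R,\mS}$ and let $\{e_i\}_{i\in I}$ be an infinite set of orthogonal nonzero idempotents in $R$. Then $\{f(e_i)\}_{i\in I}$ is an infinite set of orthogonal nonzero idempotents in $A$ (nonzero since $f$ is injective, orthogonal since $f$ is a ring homomorphism). But a semisimple Artinian ring has finite uniform dimension, so it cannot contain an infinite direct sum $\bigoplus_{i\in I} Ae_i$ of nonzero left ideals; contradiction. Hence $\CM_{R,\mS}=\emptyset$.

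For item (2): it suffices, by item (1), to exhibit an infinite set of orthogonal nonzero idempotents in $\mI_n$. For $n=1$ this is standard: in $\mI_1=K\langle x,\der,\int\rangle$ one has the orthogonal idempotents $E_{ij}$ (matrix units acting on the polynomial module) and in particular the infinitely many orthogonal idempotents $e_i=\int^i x^i-\int^{i+1}x^{i+1}$ (the projections onto the one-dimensional spaces $Kx^i$); this is recorded in \cite{Bav-intdifline}. For general $n$, $\mI_n\cong\mI_1^{\otimes n}$ contains $\mI_1$ as a subalgebra, so the same idempotents work, or one takes tensor products of the $n=1$ idempotents. Apply item (1).

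For item (3): suppose $f\colon R\hookrightarrow A\in\CM_{R,\mS}$ with $x,y\in R$, $yx=1$, $xy\neq1$. Then in $A$ we have $f(y)f(x)=f(yx)=1$ while $f(x)f(y)=f(xy)\neq f(1)=1$ (using injectivity of $f$ and that $f$ preserves $1$). But a semisimple Artinian ring is Dedekind finite: being left Noetherian, if $ba=1$ then the surjective endomorphism $\cdot a\colon A\to A$ of the Noetherian module ${}_AA$ is injective, forcing $ab=1$. This contradicts $f(x)f(y)\neq1$. Hence $\CM_{R,\mS}=\emptyset$. The main (minor) obstacle here is just to recall the correct direction of the Dedekind-finiteness argument and to ensure that $f$ maps the unit to the unit.

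For item (4): if $R\subseteq R'$ and $\CM_{R',\mS}\neq\emptyset$, pick $g\colon R'\hookrightarrow A\in\CM_{R',\mS}$; then the restriction $g|_R\colon R\hookrightarrow A$ is still injective and is a ring homomorphism, so $g|_R\in\CM_{R,\mS}$, i.e. $\CM_{R,\mS}\neq\emptyset$. Contrapositively, $\CM_{R,\mS}=\emptyset$ implies $\CM_{R',\mS}=\emptyset$. (A small caveat to address: "contains the ring $R$" should be read as containing $R$ as a subring with the same identity, so that restriction is unital; if only a non-unital inclusion is meant the statement still holds after replacing $g|_R$ by the appropriate corner, but the intended reading is the unital one.)

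The only genuine content is item (2)'s exhibition of infinitely many orthogonal idempotents in $\mI_n$, and that is already available in the cited literature; items (1), (3), (4) are immediate consequences of the definition of monomorphism together with basic finiteness properties of semisimple Artinian rings, so I do not anticipate any serious obstacle.
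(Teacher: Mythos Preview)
Your proposal is correct. Items (1), (2), and (4) follow essentially the same approach as the paper. For item (3), however, you take a genuinely different route: you invoke Dedekind finiteness of semisimple Artinian rings directly (via the Noetherian-module argument that a surjective endomorphism of ${}_AA$ is injective), whereas the paper instead constructs the explicit family $e_i := x^i y^i - x^{i+1} y^{i+1}$, verifies these are nonzero orthogonal idempotents (nonzero because $y^i e_i x^i = 1 - xy \neq 0$), and then reduces to item (1). Your argument is shorter and more conceptual; the paper's argument is more hands-on and has the advantage of making visible that items (2) and (3) are really the same phenomenon --- the idempotents you cite in $\mI_1$ are precisely this construction applied to the pair $(\int, \der)$.
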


\begin{proof} 1. Each  semisimple Artinian ring  does not contain an infinite set of orthogonal nonzero idempotents, and statement 1 follows.

2. The algebras $\mI_n$ contain an infinite set of orthogonal idempotents  \cite[Eq.(1)]{Bav-intdifline}, and so statement 2 follows from statement 1.

3. Then set $\{e_i:=x^iy^i-x^{i+1}y^{i+1}\, |\, i\in \N\}$ is an infinite set of orthogonal nonzero idempotents. Direct calculations show that the idempotents are orthogonal. They are also  nonzero: If $0=e_i$ for some $i\geq 0$ then
 $$0=y^i0x^i=y^ie_ix^i=1-xy, \;\; {\rm i.e.}\;\;xy=1,
 $$
  a contradiction. Now, statement 3 follows from statement 1. 

4. Statement 4 is obvious.
\end{proof}

{\bf The category $\CM_{R,\mS}$ and the family of left $R$-collections $\CV_l(R)$.}  
 Let $A$ be a semisimple Artinian ring. The ring $$A=\prod_{i=1}^s A_i$$ is a direct product of simple Artinian rings $A_i=M_{n_i}(D_i)$ where $n_i\geq 1$ and $D_i$ are division rings. Let $1=e_1+\cdots +e_s$ be the corresponding  sum of 
 central idempotents of $A$.
  Let  $U_i$ be a unique (up to isomorphism) simple  left  $A_i$-module. Namely,  ${}_{D_i}U_i=D_i^{n_i}$ with matrix multiplication on the left. Each nonzero $A$-module of finite length $$N=\bigoplus_{i=1}^s N_i$$ is a unique direct sum where $N_i=e_iA\simeq U_i^{l_i}$ for some $l_i\in \N$, and $l_A(N)=\sum_{i=1}^sl_i$ is the length of the $A$-module $N$. The unique $A$-submodule $N_i$ is called  the $i$'th {\bf isotypic component} of $N$. The $A$-module $N_i$ is the sum of all simple left $A$-submodules of $N$ that are isomorphic to $U_i$  or zero if there are none. The $A$-modules $U_1, \ldots , U_s$  are all the simple $A$-modules up to isomorphism.
  For each $i=1, \ldots , s$, 
  $$\End_A(U_i)\simeq D_i.$$ We write endomorphism on the opposite side of the scalars but for  practical reason individual endomorphism are often  written on the left.

Similarly, let  $V_i$ be a unique (up to isomorphism) simple  right  $A_i$-module. Namely,  ${V_i}_{D_i}=D_i^{n_i}$ with matrix multiplication on the right.
Each nonzero right $A$-module of finite length $$M=\bigoplus_{i=1}^sM_i$$ is a unique direct sum where $M_i=Ae_i\simeq V_i^{l_i}$ for some $l_i\in \N$, and $l (M_A)=\sum_{i=1}^sl_i$ is the length of the right $A$-module $M$. The unique right $A$-submodule $M_i$ is called  the $i$'th {\bf isotypic component} of $M$. The right $A$-module $M_i$ is the sum of all simple right $A$-submodules of $M$ that are isomorphic to $V_i$ or zero if there are none. 
  For each $i=1, \ldots , s$, 
  $$\End_A(V_i)\simeq D_i.$$ 
The $A$-modules $V_1, \ldots , V_s$  are all the simple right  $A$-modules up to isomorphism.

\begin{definition}
A  monomorphism $R\stackrel{f}{\ra}\prod_{i=1}^sA_i\in \CM_{R,\mS}$ is called {\bf redundant} if one of the direct multiplies can be omitted and {\bf irredundant} otherwise. Let $\Irr \,\CM_{R, \mS}$ be the  family of irredundant $R$-monomorphisms. For an irredundant monomorphism $R\stackrel{f}{\ra}\prod_{i=1}^sA_i\in \CM_{R,\mS}$, the natural number $s(f):=s$ is called the {\bf size} of $f$. 
\end{definition}

We will see that if $R$ is a semiprime ring with $|\min (R)|<\infty$ then $s(f)\leq |\min (R)|$ for all $R\stackrel{f}{\ra}\prod_{i=1}^sA_i\in \Irr \,\CM_{R,\mS}$ (Theorem \ref{B9Sep23}.(2a)).

\begin{lemma}\label{a12Sep23}
Suppose that a monomorphism $R\stackrel{f}{\ra}\prod_{i=1}^nA_i\in \CM_{R,\mS}$ is  redundant then there is a proper subset $I$ of $\{ 1, \ldots , n\}$ such that the  monomorphism $R\stackrel{f_I}{\ra}\prod_{i\in I}A_i\in \CM_{R,\mS}$ is  irredundant where $f_I:=\pr_I f$ and $\pr_I:\prod_{i=1}^nA_i\ra \prod_{i\in I}A_i$ is the projection epimorphism, and $f_I\leq f$.
\end{lemma}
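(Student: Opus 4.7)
The plan is a straightforward finite minimality argument on the index set $\{1,\dots,n\}$. Consider the collection
$$\mathcal{J}:=\{ J\subseteq \{1,\dots,n\}\mid f_J:=\pr_J\circ f\text{ is injective}\},$$
which is partially ordered by inclusion. Since $f$ itself is a monomorphism, $\{1,\dots,n\}\in\mathcal{J}$; since $f$ is redundant, some proper subset $J_0\subsetneq\{1,\dots,n\}$ also belongs to $\mathcal{J}$. Because $\mathcal{J}$ is a finite nonempty poset, I would choose a minimal element $I\in\mathcal{J}$ (with $I\subseteq J_0$, say). The minimality of $I$ gives $|I|\le|J_0|<n$, so $I$ is a proper subset.

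Next I would verify that $f_I$ is irredundant. If not, then by definition some factor indexed by $i_0\in I$ could be omitted, i.e.\ $f_{I\setminus\{i_0\}}$ would still be a monomorphism. But then $I\setminus\{i_0\}\in\mathcal{J}$ lies strictly below $I$, contradicting the minimality of $I$. Hence $f_I\in\Irr\,\CM_{R,\mS}$.

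Finally, to obtain $f_I\le f$, observe that the projection $\pr_I\colon \prod_{i=1}^n A_i\twoheadrightarrow\prod_{i\in I}A_i$ is a surjective ring homomorphism, hence an epimorphism, and satisfies $f_I=\pr_I\circ f$ by construction, so $\pr_I\in\CM_{R,\mS}(f,f_I)$. This is precisely the first generating relation in the definition of $\le$ on $\Ob(\CM_{R,\mS})$; no iteration through the transitive closure is needed. One may safely exclude the trivial case $R=0$ (otherwise $\mathcal{J}$ would contain $\emptyset$ and the statement is vacuous), which also ensures $I\neq\emptyset$.

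There is essentially no hard step here: the argument is a standard pigeonhole/minimality observation, and the only mild care required is bookkeeping to confirm that the minimal element of $\mathcal{J}$ really is a proper subset (which is forced by redundancy of $f$) and that one instance of the generating relation $\le$ suffices to conclude $f_I\le f$ directly.
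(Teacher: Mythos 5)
Your argument is correct and is exactly the standard finite-minimality argument the paper has in mind (its own proof is simply ``Straightforward''): pick a minimal subset $I$ on which $f_I$ remains injective, minimality forces irredundancy, and the single projection epimorphism $\pr_I$ witnesses $f_I\leq f$ via the first generating relation of the partial order. Your remarks that $I$ is proper (forced by redundancy and by $R\neq 0$, which holds since $1\mapsto 1\neq 0$ in a semisimple Artinian ring) are the right bookkeeping.
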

\begin{proof} Straightforward.
\end{proof}

{\bf Left $(R,A)$-collections and $R$-collection monomorphisms.} Given rings $R$  and $A$, a $(R,A)$-bimodule $M= {}_RM_A$ is called a simple $(R,A)$-bimodule if $\{ 0\}$ and $M$ are the only $(R,A)$-subbimodules of $M$. For a left  (resp., right) $R$-module $N$, $l (N)=l_R(N)=l({}_RN)$ (resp., $l (N)=l(N_R)$) is its length and $\ann (N)=\ann ({}_RN)$ (resp., $\ann (N)=\ann (N_R)$) is its annihilator. 
\begin{definition}\label{collect}
Suppose that  $R$ is  a semiprime ring with $|\min (R)|<\infty$ and 

\begin{equation}\label{A=pA(gp)}
A=\prod_{\gp \in \min (R)}A(\gp)
\end{equation}
 is a semisimple Artinian ring where $A(\gp)$ are simple Artinian rings and 
\begin{equation}\label{1=seA(gp)}
1=\sum_{\gp \in \min (R)}e_{A(\gp)}
\end{equation} 
be the corresponding sum of central orthogonal idempotents $e_{A(\gp)}$ of $A$. 
 A set
\begin{equation}\label{V=setV(gp)}
V=\{ V(\gp)\, | \, \gp \in \min (R)\}
\end{equation}
 is called a {\bf left $R$-collection} if for every $\gp \in \min (R)$, 
 \begin{itemize}
 \item $V(\gp)$ is an simple $(R,A(\gp ))$-bimodule,
 \item  $l_\gp :=l(V(\gp)_{A(\gp )})<\infty$, and 
 \item $\gp = \ann_R(V(\gp))$.
 \end{itemize}
  Let $\CV_l(R)$ be a family of all left $R$-collections (for all possible rings $A$). We drop the adjective `left' if this does not lead to confusion (`left' stands for `left $R$-module').
\end{definition}
If we want to stress the role of the ring $A$ in the $R$-collection $V$ then we say that $V$ is an $(R,A)$-collection.

Intuitively, left $R$-collection is  a concept and a very efficient tool for producing  embeddings of a  semiprime ring $R$ with $|\min (R)|<\infty$ into semisimple Artinian rings. Furthermore, for a given embedding,   we can construct a smaller $R$-embedding by using left $R$-collections and we can construct  explicit proper epimorphisms/monomorphisms between the targets to demonstrate the  `smallness'.

Each left collection $V=\{ V(\gp)\, | \, \gp \in \min (R)\}$ can be identified with the semisimple $(R,A)$-bimodule 
\begin{equation}\label{VRcol-5}
V=\bigoplus_{\gp \in \min (R)}V(\gp)
\end{equation}
where $V(\gp)=Ve_{A(\gp)}$ are simple isotypic components of ${}_RV_A$ such that $l(V_A)<\infty$ and $\gp = \ann_R(V(\gp))$ for all $\gp \in \min (R)$, and vice versa. In more detail,  if $V$ is such a $(R,A)$-bimodule then the set of its isotypic components
\begin{equation}\label{VRcol}
 \{ V(\gp):=Ve_{A(\gp)}\, | \, \gp \in \min (R)\}
\end{equation}
is a left $R$-collection.
\begin{definition}
The $(R,A)$-bimodule $V$ is called the {\bf $(R,A)$-bimodule  of the collection} $V$. 
\end{definition}
  For all $\gp, \gq\in \min (R)$ such that $\gp \neq \gq$, 
$$\Hom (V(\gp)_A,V(\gq)_A)=\{ 0\}$$
as for every $f\in \Hom (V(\gp)_A,V(\gq)_A)$,
$f(V(\gp))=f(V(\gp)e_{A(\gp)})e_{A(\gq)}=f(V(\gp))e_{A(\gp)}e_{A(\gq)}=0.$
It follows that 
\begin{equation}\label{VRcol-6}
\End (V_A)\simeq \prod_{\gp \in \min (R)}\End(V(\gp) _{A(\gp)})
\end{equation}
In order to avoid opposite rings, we write endomorphisms on the {\em opposite} site of the scalars. Though some individual endomorphisms we write on the left (if this does not lead to confusion and simplifies  notation). 

Let $V(A(\gp))$ be a unique (up to isomorphism) simple right $A(\gp)$-module and $U(A(\gp))$ be a unique (up to isomorphism) simple right $A(\gp)$-module. Then the endomorphism ring 
\begin{equation}\label{VRcol-7}
D(\gp):=\End (V(A(\gp))_{A(\gp)})\simeq \End ({}_{A(\gp)}U(A(\gp)))
\end{equation}
 is a division ring and so its centre $Z(D(\gp))$ is a field. Then 
\begin{equation}\label{VRcol-8}
A(\gp))\simeq M_{n_\gp}(D(\gp))\;\; {\rm and}\;\; 
V(A(\gp))_{A(\gp)}\simeq \Big( D(\gp)^{n_\gp}\Big)_{ M_{n_\gp}(D(\gp))}\;\; {\rm for\; some}\;\; n_\gp\geq 1.
\end{equation}
For each  $(R,A)$-collection $V\in \CV_l(R)$, let us consider the ring monomorphism 
\begin{equation}\label{VRcol-1}
\phi_V: R\ra \End (V_A)\simeq \prod_{\gp \in \min (R)}\End(V(\gp) _{A(\gp)})\simeq \prod_{\gp \in \min (R)}M_{l_\gp}(D(\gp))
\end{equation}
where for all elements $r\in R$ and $(v(\gp))_{\gp \in \min (R)}\in V=\bigoplus_{\gp \in \min (R)}V(\gp)$,
\begin{equation}\label{VRcol-2}
\phi_V(r)(v(\gp))_{\gp \in \min (R)} : =(rv(\gp))_{\gp \in \min (R)}.
\end{equation}
In more detail, since the ring $R$ is a semiprime ring and $\ann_R(V(\gp))=\gp$ for all $\gp \in \min (R)$, 
$$\ker (\phi_V)=\bigcap_{\gp \in \min (R)}\ann_R(V(\gp))=\bigcap_{\gp \in \min(R)}\gp=0,$$ 
i.e. the ring homomorphism  $\phi_V$ is a monomorphism. 

\begin{definition}
The above monomorphism 
$$\phi_V: R\ra A_V:=\End(V_A)=\prod_{\gp\in \min (R)}A_V(\gp)\in \CM_{R,\mS},\;\; {\rm  where}\;\; A_V(\gp):=\End(V(\gp) _{A(\gp)}), $$ is called the {\bf monomorphism/embedding of the collection} $V$ and all such monomorphisms are called {\bf collection $R$-monomorphisms}. Let $\CM_{R, \mS}^c$ be the class of  collection $R$-monomorphisms. 
\end{definition}
For a $(R,A)$-collection $V\in   \CV_l(R)$, the endomorphism ring
\begin{equation}\label{VRcol-3}
\End ({}_RV_A)\simeq \prod_{\gp \in \min (R)}\End({}_RV(\gp)_{A(\gp)})
\end{equation}
is a direct product of {\em division rings} $\End({}_RV(\gp) _{A(\gp)})$ (since ${}_RV(\gp) _{A(\gp)}$ is a simple bimodule),   
\begin{equation}\label{VRcol-4}
\End ({}_RV_A)\subseteq \End (V_A)\;\; {\rm and}\;\; \End({}_RV(\gp)_{A(\gp)})\subseteq \End({}V(\gp)_{A(\gp)})=A_V(\gp)\;\; {\rm for\; all}\;\; \gp \in \min (R).
\end{equation}

Proposition \ref{A14Sep23}  and Lemma \ref{b14Sep23} contain some useful properties of $V\in \CV_l(R)$, $A=\prod_{\gp \in \min (R)}A(\gp)$, $A_V$   and $\phi_V$.

\begin{proposition}\label{A14Sep23}
Let $V\in \CV_l(R)$, $A=\prod_{\gp \in \min (R)}A(\gp)$,  and $\phi_V: R\ra A_V:=\prod_{\gp\in \min (R)}A_V(\gp)$ be as above. Then 
\begin{enumerate}

\item For each $\gp \in \min (R)$,  the bimodule ${}_{A_V(\gp)}V(\gp)_{A(\gp)}$ is  simple.

\item {\sc (The Double Centralizer Theorem)}
$\End ({}_{A_V(\gp)}V(\gp))\simeq A(\gp)$.

\item For each $\gp \in \min (R)$, $$Z(\End (V(A(\gp))_{A(\gp)}))=Z(D(\gp))=\End({}_{A_V(\gp)}V(\gp)_{A(\gp)})\subseteq 
\End({}_RV(\gp)_{A(\gp)})\subseteq \End(V(\gp)_{A(\gp)}).$$
 
\end{enumerate}
\end{proposition}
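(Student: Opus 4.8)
\emph{Proof proposal.} Since by \eqref{VRcol-6} and \eqref{VRcol-3} every ring and bimodule occurring in the statement splits as a product indexed by $\min(R)$, the plan is to fix a single $\gp\in\min(R)$ and work with the corresponding factors $A(\gp)$, $V(\gp)$, $A_V(\gp)=\End(V(\gp)_{A(\gp)})$ and $D(\gp)=\End(V(A(\gp))_{A(\gp)})$ (cf.\ \eqref{VRcol-7}). The one structural fact I would establish first and then reuse throughout is that $V(\gp)_{A(\gp)}$ is a \emph{progenerator}: it has finite length $l_\gp\ge 1$, hence is finitely generated; it is projective since $A(\gp)$ is semisimple; and it is a generator since over a simple Artinian ring every nonzero module is a generator. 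Consequently $V(\gp)_{A(\gp)}\simeq V(A(\gp))^{l_\gp}$ and $A_V(\gp)\simeq M_{l_\gp}(D(\gp))$, a simple Artinian ring whose centre is $Z(D(\gp))$. Given this, (1) becomes an idempotent argument using only simplicity of $A_V(\gp)$, (2) is exactly the classical Double Centralizer Theorem for progenerators, and (3) is a bookkeeping consequence of (2).

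For (1), I would take a nonzero $(A_V(\gp),A(\gp))$-sub-bimodule $W\subseteq V(\gp)$. As a right $A(\gp)$-submodule of the semisimple module $V(\gp)_{A(\gp)}$, $W$ is a direct summand, so $W=eV(\gp)$ for the nonzero idempotent $e\in\End(V(\gp)_{A(\gp)})=A_V(\gp)$ projecting onto it. Since $W$ is also a left $A_V(\gp)$-submodule, $A_V(\gp)eV(\gp)\subseteq eV(\gp)$, i.e. $(1-e)A_V(\gp)e\cdot V(\gp)=0$; as $V(\gp)$ is a faithful left $A_V(\gp)$-module this gives $(1-e)A_V(\gp)e=0$. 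Then $A_V(\gp)eA_V(\gp)$ is a nonzero two-sided ideal of the simple ring $A_V(\gp)$, so it equals $A_V(\gp)$; writing $1=\sum_i a_ieb_i$ and multiplying on the left by $1-e$ gives $1-e=0$, whence $W=V(\gp)$ and ${}_{A_V(\gp)}V(\gp)_{A(\gp)}$ is simple.

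For (2), I would invoke that the progenerator $V(\gp)_{A(\gp)}$ makes $V(\gp)$ the bimodule of a Morita equivalence between $A(\gp)$ and $A_V(\gp)=\End(V(\gp)_{A(\gp)})$; in particular the canonical homomorphism $A(\gp)\to\End({}_{A_V(\gp)}V(\gp))$ induced by the right $A(\gp)$-action on $V(\gp)$ is an isomorphism --- this is the Double Centralizer Theorem. One can also see it directly from $V(\gp)_{A(\gp)}\simeq V(A(\gp))^{l_\gp}$ and $A_V(\gp)\simeq M_{l_\gp}(D(\gp))$, since then $\End({}_{A_V(\gp)}V(\gp))\simeq\End({}_{D(\gp)}V(A(\gp)))\simeq A(\gp)$ and one verifies the isomorphism is the canonical one.

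For (3): the first equality is the definition of $D(\gp)$. For the second, an endomorphism in $\End({}_{A_V(\gp)}V(\gp)_{A(\gp)})$ is by definition one commuting with the right $A(\gp)$-action --- hence an element of $A_V(\gp)=\End(V(\gp)_{A(\gp)})$ --- and with the left $A_V(\gp)$-action --- hence central in $A_V(\gp)$; so $\End({}_{A_V(\gp)}V(\gp)_{A(\gp)})=Z(A_V(\gp))=Z(D(\gp))$ by the first paragraph. Finally, the left $R$-module structure on $V(\gp)$ is given by the ring homomorphism $R\to A_V(\gp)$ of \eqref{VRcol-1}--\eqref{VRcol-2}, whose image is $\bR:=R/\gp$ because $\ann_R(V(\gp))=\gp$; hence $\End({}_RV(\gp)_{A(\gp)})$ is precisely the centralizer of $\bR$ in $A_V(\gp)$, which contains $Z(A_V(\gp))$ and is contained in $A_V(\gp)=\End(V(\gp)_{A(\gp)})$, giving the displayed chain. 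The step I expect to need the most care is not a single deduction but keeping the identifications literal: (3) is stated as equalities and inclusions of genuine subrings of $\End(V(\gp)_{A(\gp)})$, so one must ensure the isomorphisms of the first paragraph and of (2) are the \emph{canonical} ones and that the convention of writing endomorphisms on the opposite side of the scalars, as in \eqref{VRcol-3}--\eqref{VRcol-4}, is used consistently. Everything else is routine.
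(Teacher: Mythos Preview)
Your proposal is correct. The paper's own proof proceeds via the explicit matrix identification
\[
{}_{A_V(\gp)}V(\gp)_{A(\gp)}\ \simeq\ {}_{M_{l_\gp}(D(\gp))}M_{l_\gp,n_\gp}(D(\gp))_{M_{n_\gp}(D(\gp))}
\]
and then carries out direct element-chasing with matrix units $E_{ij}$: for (1) it shows any nonzero matrix is a bimodule generator; for (2) it computes $\End({}_{M_{l_\gp}(D(\gp))}M_{l_\gp,n_\gp}(D(\gp)))\simeq M_{n_\gp}(D(\gp))$ by hand; for (3) it checks that any bimodule endomorphism acts as a scalar $z\in Z(D(\gp))$ on $E_{11}$ and hence on every $E_{ij}$.

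Your route establishes the same structural input ($V(\gp)_{A(\gp)}$ is a progenerator, $A_V(\gp)\simeq M_{l_\gp}(D(\gp))$) but then argues more abstractly: an idempotent/simplicity argument for (1), Morita/Double Centralizer for (2), and the identification $\End({}_{A_V(\gp)}V(\gp)_{A(\gp)})=Z(A_V(\gp))=Z(D(\gp))$ for (3). This buys cleaner bookkeeping and avoids coordinates, at the price of citing the Double Centralizer Theorem rather than re-deriving it; the paper's approach is more self-contained but coordinate-heavy. Your own caveat about keeping the identifications canonical so that (3) holds as literal equalities of subrings is well placed and is the only point requiring care.
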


\begin{proof}  By (\ref{VRcol-7}), (\ref{VRcol-8})  and (\ref{VRcol-1}), $D(\gp)=\End(V(A(\gp)_{A(\gp)})$  is a division ring, $A(\gp)\simeq M_{n_\gp}(D(\gp))$,  $$V(\gp)_{A(\gp)}\simeq V(A(\gp))^{l_\gp}\;\; {\rm  and}\;\; A_V(\gp)=\End (V(\gp)_{A(\gp)})\simeq \End(V(A(\gp))^{l_\gp}_{A(\gp)})\simeq M_{l_\gp}(D(\gp)).$$ Let $ M_{l_\gp, n_\gp}(D(\gp))$ be the set of $l_\gp\times  n_\gp$-matrices over the division ring $D(\gp)$. Then  

\begin{equation}\label{VRcol-9}
{}_{A_V(\gp)}V(\gp)_{A(\gp)}\simeq {}_{M_{l_\gp}(D(\gp))}M_{l_\gp, n_\gp}(D(\gp))_{M_{n_\gp}(D(\gp))}.
\end{equation}

1. We have to show that every nonzero element $a=(a_{ij})\in 
M_{l_\gp, n_\gp}(D(\gp))$ is a bimodule generator for  ${}_{M_{l_\gp}(D(\gp))}M_{l_\gp, n_\gp}(D(\gp))_{M_{n_\gp}(D(\gp))}$. Since $a\neq 0$, $a_{ij}\neq 0$ for some $i$ and $j$. Then the element 
$a_{ij}^{-1}E_{ii}aE_{jj}=E_{ij}$ is a bimodule generator, and we are done. So,  the bimodule ${}_{A_V(\gp)}V(\gp)_{A(\gp)}$ is  simple, by (\ref{VRcol-9}).

2. The left $M_{l_\gp}(D(\gp))$-module $U(A_V(\gp))=D(\gp)^{l_\gp}$ is a unique (up to isomorphism) simple left $M_{l_\gp}(D(\gp))$-module, and $\End ({}_{M_{l_\gp}(D(\gp))}U(A_V(\gp)))\simeq D(\gp)$.  Now, by (\ref{VRcol-9}),
 $$\End ({}_{A_V(\gp)}V(\gp))\simeq \End( {}_{M_{l_\gp}(D(\gp))}M_{l_\gp, n_\gp}(D(\gp)))\simeq 
  \End ({}_{M_{l_\gp}(D(\gp))}\Big(D(\gp)^{l_\gp}\Big)^{n_\gp})\simeq M_{n_\gp}(D(\gp)) \simeq A(\gp).$$
3. Clearly, 
$\End({}_{A_V(\gp)}V(\gp)_{A(\gp)})\subseteq 
\End({}_RV(\gp)_{A(\gp)})\subseteq \End(V(\gp)_{A(\gp)})$ and $Z(D(\gp))=Z(\End (V(A(\gp))_{A(\gp)}))
$, see  (\ref{VRcol-7}).  In view of  (\ref{VRcol-9}), it suffices  to show that
$$ \End( {}_{M_{l_\gp}(D(\gp))}M_{l_\gp, n_\gp}(D(\gp))_{M_{n_\gp}(D(\gp))})\simeq Z(D(\gp)).
$$
Let $f\in  \End( {}_{M_{l_\gp}(D(\gp))}M_{l_\gp, n_\gp}(D(\gp))_{M_{n_\gp}(D(\gp))})$. Then $f(E_{11})=zE_{11}$ for some element $z\in D(\gp)$. For all elements $d\in D(\gp)$, 
$$f(0)=f(dE_{11}-E_{11}d)=df(E_{11})-f(E_{11})d=(dz-zd)E_{11},$$
and so the $z\in Z(D(\gp)$. 
 For all $i$ and $j$, $$f(E_{ij})=f(E_{i1}E_{11}E_{1j})=E_{i1}f(E_{11})E_{1j}=E_{i1}zE_{11}E_{1j}=zE_{ij}.$$

Now, for all elements $a=\sum_{i,j}a_{ij}E_{ij}\in M_{l_\gp, n_\gp}(D(\gp))$,
$$f(a)=f\bigg(\sum_{i,j}a_{ij}E_{ij}\bigg)=\sum_{i,j}a_{ij}f(E_{ij})=\sum_{i,j}a_{ij}zE_{ij}
=z\sum_{i,j}a_{ij}E_{ij}=za,$$
 and the claim follows.  
\end{proof}

For a subset $S$ of a ring $A$, the sets $\lann_A(S):=\{ a\in A\, | \, aS=0\}$ and 
 $\rann_A(S):=\{ a\in A\, | \, Sa=0\}$ are called the {\bf left} and {\bf right annihilator} of $S$ in $A$, respectively. By the definition, the sets $\lann_A(S)$ and $\rann_A(S)$ are left and right ideals, respectively. 


\begin{lemma}\label{b14Sep23}
Let $V\in \CV_l(R)$, $A=\prod_{\gp \in \min (R)}A(\gp)$,  and $\phi_V: R\ra A_V:=\prod_{\gp\in \min (R)}A_V(\gp)$ be as above. Then 
\begin{enumerate}
\item $\CM_{R, \mS}^c\subseteq \Irr \,\CM_{R, \mS}$.
\item For each $\gp \in \min (R)$, the rings $A(\gp)$ and $A_V(\gp)$ are Morita equivalent.

\item $\gp V(\gq)=\begin{cases}
 V(\gq)& \text{if }\gp \neq  \gq,\\
\{ 0\}& \text{if }\gp= \gq.
\end{cases}$

\item For all $\gp \in \min (R)$, the factor algebra $R/\gp$ is a subalgebra of $A_V(\gp)$. In particular, $\gp A_V(\gp)= A_V(\gp)\gp =\{0\}$. 


\item  For all $\gp \in \min (R)$, $\lann_{A_V}(\gp)=\rann_{A_V}(\gp)=A_V(\gp)$.

\item The right $A$-module $V =\bigoplus_{\gp \in \min (R)}V(\gp)$ is a finitely generated projective generator. The rings $A$ and $A_V$ are Morita equivalent.
\end{enumerate}
\end{lemma}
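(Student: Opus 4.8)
The plan is to prove the six parts in the order $(3)$, $(4)$, $(5)$, $(1)$, $(2)$, $(6)$, since the later parts rest on the earlier ones; the only inputs I would use beyond the definitions are the simplicity of the bimodules ${}_RV(\gp)_{A(\gp)}$, the incomparability of distinct minimal primes of a semiprime ring, and the structural identifications $A(\gp)\cong M_{n_\gp}(D(\gp))$ and $A_V(\gp)=\End(V(\gp)_{A(\gp)})\cong M_{l_\gp}(D(\gp))$ recorded in the proof of Proposition~\ref{A14Sep23}.

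For $(3)$: since $\gp=\ann_R(V(\gp))$ one has $\gp V(\gp)=0$, while for $\gp\neq\gq$ the subgroup $\gp V(\gq)$ is an $(R,A(\gq))$-subbimodule of the simple bimodule $V(\gq)$ (left $R$-stable because $\gp$ is a left ideal, right $A(\gq)$-stable because $V(\gq)$ is a right $A(\gq)$-module), hence equals $0$ or $V(\gq)$; the value $0$ is excluded, as it would give $\gp\subseteq\ann_R(V(\gq))=\gq$, impossible for two distinct minimal primes. For $(4)$: the composite $\phi_V^\gp\colon R\to A_V(\gp)$ of $\phi_V$ with the projection onto the $\gp$-component is left multiplication of $R$ on $V(\gp)$, so $\ker\phi_V^\gp=\ann_R(V(\gp))=\gp$; this yields the embedding $R/\gp\hookrightarrow A_V(\gp)$, and since $\phi_V(\gp)e_{A_V(\gp)}=0$ one gets $\gp A_V(\gp)=A_V(\gp)\gp=0$.

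For $(5)$, I would fix $\gp$, write $a=(a_\gq)_\gq\in A_V=\prod_\gq A_V(\gq)$, and note that $a\in\lann_{A_V}(\gp)$ iff $a_\gq\phi_V^\gq(\gp)=0$ for every $\gq$. The condition at $\gq=\gp$ is vacuous by $(4)$; for $\gq\neq\gp$ it forces $a_\gq\bigl(\phi_V^\gq(\gp)V(\gq)\bigr)=0$, and $\phi_V^\gq(\gp)V(\gq)=\gp V(\gq)=V(\gq)$ by $(3)$, so $a_\gq V(\gq)=0$ and hence $a_\gq=0$ by faithfulness of the $A_V(\gq)$-action on $V(\gq)$; thus $\lann_{A_V}(\gp)=A_V(\gp)$. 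For $\rann_{A_V}(\gp)$ the reduction gives $\phi_V^\gq(\gp)a_\gq=0$ for $\gq\neq\gp$, hence $a_\gq V(\gq)\subseteq N:=\bigcap_{p\in\gp}\ker\bigl(p\colon V(\gq)\to V(\gq)\bigr)$; here $N$ is again an $(R,A(\gq))$-subbimodule of $V(\gq)$ — and this is the only slightly delicate point, since the left $R$-invariance of $N$ uses precisely that $\gp$ is a \emph{two-sided} ideal — and $N\neq V(\gq)$ because $\gp V(\gq)=V(\gq)\neq 0$, so $N=0$, whence $a_\gq V(\gq)=0$, $a_\gq=0$, and $\rann_{A_V}(\gp)=A_V(\gp)$. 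I expect this right-annihilator step to be the main obstacle; everything else is essentially formal.

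For $(1)$: were $\phi_V$ redundant, some $\gq_0$ could be dropped, making $R\to\prod_{\gp\neq\gq_0}A_V(\gp)$ injective, i.e. $\bigcap_{\gp\neq\gq_0}\gp=0$; but then $\prod_{\gp\neq\gq_0}\gp\subseteq\bigcap_{\gp\neq\gq_0}\gp=0\subseteq\gq_0$, and primeness of $\gq_0$ together with $|\min(R)|<\infty$ would give $\gp\subseteq\gq_0$ for some $\gp\neq\gq_0$, contradicting minimality; hence $\CM_{R,\mS}^c\subseteq\Irr \CM_{R,\mS}$. For $(2)$: $V(\gp)_{A(\gp)}$ is a nonzero right module of finite length $l_\gp$ over the simple Artinian ring $A(\gp)$ — hence a finitely generated projective generator — with endomorphism ring $A_V(\gp)$ (equivalently $M_{n_\gp}(D(\gp))$ and $M_{l_\gp}(D(\gp))$ are matrix rings over the same division ring), so $A(\gp)$ and $A_V(\gp)$ are Morita equivalent. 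For $(6)$: summing over the finite set $\min(R)$, $V_A=\bigoplus_\gp V(\gp)$ is finitely generated projective over $A=\prod_\gp A(\gp)$ and contains every simple right $A$-module $V(A(\gp))$ (one per factor, since $l_\gp\geq1$) as a direct summand, hence is a progenerator, so $A_V=\End(V_A)$ is Morita equivalent to $A$.
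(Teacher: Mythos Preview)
Your proof is correct and follows essentially the same route as the paper's. The only cosmetic differences are the ordering (the paper proves the parts in numerical order, relying implicitly on the irredundance of $\min(R)$ for part~(1)) and the phrasing of the right-annihilator step in~(5): where you introduce the subbimodule $N=\{v\in V(\gq):\gp v=0\}$ and kill it by simplicity, the paper argues contrapositively that $R\cdot\im(\alpha)=V(\gq)$ and then $\gp\cdot\im(\alpha)=\gp V(\gq)\neq 0$; these are the same idea viewed from opposite ends.
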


\begin{proof} 1. For $V\in\CV_l(R)$, $\gp= \ann_R(V(\gp)$ for all $\gp \in \min (R)$. Therefore, $\phi_V\in \Irr \,\CM_{R, \mS}$. Hence,  $\CM_{R, \mS}^c\subseteq \Irr \,\CM_{R, \mS}$.

 2. By (\ref{VRcol-1}), $A_V(\gp)\simeq M_{l_\gp}(D(\gp))$ and $A(\gp)\simeq M_{n_\gp}(D(\gp))$ where $n_\gp :=l_{A(\gp)}(A(\gp))$, and statement 1 follows.

3. Since $V\in \CV_l(R)$, $\gp = \ann_R(V(\gp))$, and so $\gp V(\gp)=0$ and $\gp V(\gq)\neq 0$ for all $\gq \neq \gp$ (otherwise, $\gp\subseteq \gq$, a contradiction). Since $\gp V(\gq)\neq 0$ is a nonzero $(R, A(\gq))$-subbimodule of the simple 
$(R, A(\gq))$-bimodule $V(\gq)$, we must have $\gp V(\gq)=V(\gq)$.

4.  By the definition of the collection $V$, $\gp = \ann_R(V(\gp))$. Now, by (\ref{VRcol-1})  and  (\ref{VRcol-2}), the factor algebra $R/\gp$ is a subalgebra of $A_V(\gp)$. In particular, $\gp A_V(\gp)= A_V(\gp)\gp =\{0\}$.

5. In view of statement 4 and (\ref{VRcol-6}),   it suffices to show that for each $\gq\neq \gp$ and each nonzero element $\alpha \in \End( V(\gq)_{A}))$, $$\gp \alpha\neq 0\;\; {\rm  and}\;\;\alpha \gp \neq 0.$$
(i) $\gp \alpha\neq 0$: Suppose that $\gp \alpha =0$. Since $0\neq \alpha \in \End( V(\gq)_{A}))$, the image of the endomorphisms $\alpha$, $\im (\alpha)$,  is a nonzero right $A(\gq)$-submodule of $V(\gq)_{A(\gq)}$. By the definition of the $R$-collection, the bimodule ${}_RV(\gq)_{A(\gq)}$ is simple. Hence,
$R\im (\alpha)=V(\gq)$.
Now, $$\{0\}=\gp \alpha (V(\gq))=\gp\im (\alpha)
=\gp R \im (\alpha)=\gp V(\gq)=V(\gq)\neq \{ 0\},$$  by statement 3 (since $\gp\neq \gq$), a contradiction.

(ii) $ \alpha\gp\neq 0$: Suppose that $ \alpha\gp =0$. Then  
$$\{ 0\}=\alpha \gp(V(\gq)) =\alpha (\gp V(\gq))=\alpha (V(\gq))\neq \{0\},$$
 by statement 3 (since $\gp\neq \gq$), a contradiction.


6. By the definition of the collection $V$, the right $A$-module $V =\bigoplus_{\gp \in \min (R)}V(\gp)$ is a finitely generated projective generator. Hence, the rings $A$ and $A_V\simeq \End (V_A)$ are Morita equivalent. Alternatively, statement 6 follows from statement 2 as 
$A=\prod_{\gp \in \min (R)}A(\gp)$ and $A_V=\prod_{\gp \in \min (R)}A_V(\gp)$.
\end{proof}

{\bf Characterization of the set of minimal primes of a semiprime ring.}
\begin{definition} A finite set $Q$ of ideals of a ring  $R$ with zero intersection is called an {\bf irredundant  set} of ideals if $\bigcap_{\gq \in Q\backslash \{ \gq'\}}\gq\neq 0$ for all $\gq'\in Q$. 
\end{definition}

Lemma \ref{b10Sep23} is a useful characterization of  the set $\min (R)$ of minimal primes of a semirpime ring $R$  provided $|\min (R)|<\infty$. 

\begin{lemma}\label{b10Sep23}
Let $R$ be a semiprime ring with $|\min (R)|<\infty$. Then the  set $\min (R)$ is the only irredundant set that consist of prime ideals of $R$.
\end{lemma}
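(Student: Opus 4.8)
The plan is to prove the two assertions separately: first that $\min(R)$ is itself an irredundant set of prime ideals, and then that it is the \emph{only} such set. The first part is essentially a standard fact about semiprime rings with finitely many minimal primes, and I would record it as follows. Since $R$ is semiprime, $\gn_R = \bigcap_{\gp \in \min(R)} \gp = 0$, so $\min(R)$ is a finite set of prime ideals with zero intersection. To see irredundancy, fix $\gp' \in \min(R)$ and set $\ga := \bigcap_{\gp \in \min(R) \setminus \{\gp'\}} \gp$. If $\ga = 0$, then $\ga \subseteq \gp'$, and since $\gp'$ is prime and $\ga$ is a finite intersection of ideals, one of the $\gp$ with $\gp \neq \gp'$ satisfies $\gp \subseteq \gp'$; by minimality of $\gp'$ this forces $\gp = \gp'$, a contradiction. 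Hence $\ga \neq 0$ for every $\gp' \in \min(R)$, so $\min(R)$ is irredundant.

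For the uniqueness, let $Q = \{\gq_1, \ldots, \gq_m\}$ be any irredundant set of prime ideals with $\bigcap_{j} \gq_j = 0$; I must show $Q = \min(R)$. First I would show $\min(R) \subseteq Q$: take $\gp \in \min(R)$. Since $\bigcap_j \gq_j = 0 \subseteq \gp$ and $\gp$ is prime while the intersection is finite, the prime-avoidance-type argument gives some $\gq_j \subseteq \gp$; but $\gq_j$ is prime, hence contains a minimal prime $\gp'' \in \min(R)$, so $\gp'' \subseteq \gq_j \subseteq \gp$, and minimality of $\gp$ forces $\gp'' = \gq_j = \gp$, i.e.\ $\gp \in Q$. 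This shows $\min(R) \subseteq Q$. For the reverse inclusion, suppose some $\gq_{j_0} \in Q$ is not in $\min(R)$; since $\min(R) \subseteq Q$, this means $Q \supsetneq \min(R)$. Now use irredundancy of $Q$ at $\gq_{j_0}$: the ideal $\ga := \bigcap_{j \neq j_0} \gq_j$ should be nonzero. But $\ga \supseteq \bigcap_{\gp \in \min(R)} \gp = 0$ is automatic and useless; instead note that $\min(R) \subseteq Q \setminus \{\gq_{j_0}\}$ (because $\gq_{j_0} \notin \min(R)$), so $\ga = \bigcap_{j \neq j_0} \gq_j \subseteq \bigcap_{\gp \in \min(R)} \gp = 0$, contradicting irredundancy of $Q$. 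Therefore every element of $Q$ lies in $\min(R)$, giving $Q \subseteq \min(R)$ and hence $Q = \min(R)$.

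The main obstacle — really the only point requiring care — is the repeated use of the fact that a prime ideal containing a finite intersection of ideals must contain one of them. This is the noncommutative analogue of the elementary fact for prime ideals: if $\gp$ is prime and $\ga_1 \cdots \ga_k \subseteq \gp$ (product of ideals), or even $\ga_1 \cap \cdots \cap \ga_k \subseteq \gp$, then some $\ga_i \subseteq \gp$; this follows because $\ga_1 \cdots \ga_k \subseteq \ga_1 \cap \cdots \cap \ga_k$ and primeness of $\gp$ propagates through the product by induction. I would state this lemma-fact once at the start and then invoke it in both halves of the argument. Everything else is bookkeeping with finite intersections and the minimality property of elements of $\min(R)$, so the proof is short.
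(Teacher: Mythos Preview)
Your proof is correct and follows essentially the same route as the paper's: show $\min(R)$ is irredundant, then for an arbitrary irredundant set $Q$ of primes with zero intersection, use the fact that a prime containing a finite intersection contains one of the factors to get $\min(R)\subseteq Q$, and finish by noting that a proper superset of $\min(R)$ cannot be irredundant. The paper is more terse (it simply asserts that $\min(R)$ is irredundant and that $Q\supseteq\min(R)$ suffices), and your detour through a minimal prime $\gp''$ inside $\gq_j$ is unnecessary since $\gq_j\subseteq\gp$ with $\gq_j$ prime and $\gp$ minimal already forces $\gq_j=\gp$ directly.
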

\begin{proof} Evidently, the  set $\min (R)$ is an  irredundant set that consist of prime ideals of $R$.

Suppose that $Q$ is an  irredundant set that consist of prime ideals of $R$. We have to show that $Q=\min (R)$. Since the set $\min (R)$ is an irredundant set of prime ideals, it suffices to show that $Q\supseteq\min (R)$. For each $\gp \in  \min (R)$,
$$ \bigcap_{\gq\in Q}\gq =\{0\}\subseteq \gp,$$
and so $\gq\subseteq \gp$ for some $\gq \in Q$. Hence $\gp=\gp$, by the minimality of $\gp$. Therefore, $Q\supseteq\min (R)$. 
\end{proof}

{\bf Natural embeddings and the $M$-equivalence relations.} 
\begin{definition}
Suppose that $R$ is a semiprime ring with  $|\min (R)|<\infty$.  We say that an $R$-monomorphism $f: R\ra A=\prod_{\gp \in \min (R)}A(\gp)\in  \CM_{R, \mS}$, where $A(\gp)$ are simple Artinnian rings,  is called  {\bf natural}  if $\lann_A(\gp)=\rann_A(\gp)=A(\gp)$
 for all $\gp \in \min (R)$. Let $\CN_{R, \mS}$ be the class of all natural $R$-monomorphisms. 
\end{definition}

\begin{proposition}\label{a16Sep23}
Suppose that $R$ is a semiprime ring with  $|\min (R)|<\infty$. Then 
\begin{enumerate}
\item $\CM_{R, \mS}^c\subseteq \CN_{R, \mS}\subseteq \Irr \,\CM_{R, \mS}$.
\item Suppose that  $f: R \ra A=\prod_{\gp \in \min (R)}A(\gp)\in  \CN_{R, \mS}$. Then  
\begin{enumerate}

\item  For all $\gp \in \min (R)$,  $\lann_A(A\gp A)=\rann_A(A\gp A)=A(\gp)$.

\item For all $\gp \in \min (R)$,  $A\gp A=A(1-e_{A(\gp)})=\prod_{\gq\in \min    (R)\backslash \{ \gp\} }A(\gq)$ where $1=\sum_{\gp \in \min (R)}e_{A(\gp)}$ is the sum of central orthogonal idempotents of $A$ that corresponds to the direct product $A=\prod_{\gp \in \min (R)}A(\gp)$.

\item For all $\gp \in \min (R)$, $A/A\gp A\simeq A(\gp)$.

\item For all $\gp \in \min (R)$, $R\cap A\gp A=\gp$ and $R/\gp = R/R\cap A\gp A\subseteq A/A\gp A\simeq A(\gp)$, i.e. the ring $R/\gp$ is a subring of $A(\gp)$.

\end{enumerate}

\item For all $f,f'\in  \CN_{R, \mS}$, $\CM_{R,\mS}(f,f')$ consists of ring monomorphisms.
\end{enumerate}
\end{proposition}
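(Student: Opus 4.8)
The plan is to reduce the whole Proposition to one auxiliary ideal. For $\gp\in\min(R)$ set $J_\gp:=\bigcap_{\gq\in\min(R),\,\gq\neq\gp}\gq$. Since $R$ is semiprime with $|\min(R)|<\infty$, Lemma~\ref{b10Sep23} says $\min(R)$ is an irredundant set of primes, so $J_\gp\neq0$; moreover $\gp J_\gp\subseteq\gp\cap J_\gp=\gn_R=0$ and likewise $J_\gp\gp=0$; and $J_\gp\not\subseteq\gp$, since otherwise the product of the ideals $\gq\neq\gp$, which lies in $J_\gp$, would lie in the prime $\gp$, forcing $\gq\subseteq\gp$ for some $\gq\neq\gp$ and contradicting minimality of $\gp$. (If $|\min(R)|=1$ then $J_\gp=R$ and all this is trivial.) Now fix $f\colon R\ra A=\prod_{\gp\in\min(R)}A(\gp)\in\CN_{R,\mS}$ and write $1=\sum_\gp e_{A(\gp)}$. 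From the defining equalities $\lann_A(f(\gp))=\rann_A(f(\gp))=A(\gp)$ I would record two facts: (i) $e_{A(\gp)}\in A(\gp)$, so $e_{A(\gp)}f(\gp)=f(\gp)e_{A(\gp)}=0$, i.e. $f(\gp)\subseteq A(1-e_{A(\gp)})=\prod_{\gq\neq\gp}A(\gq)$; and (ii) $\gp J_\gp=0$ gives $f(\gp)f(J_\gp)=0$, so $f(J_\gp)\subseteq\rann_A(f(\gp))=A(\gp)$, with $f(J_\gp)\neq0$ because $f$ is injective.

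The first inclusion of part~1 is exactly Lemma~\ref{b14Sep23}(5). For $\CN_{R,\mS}\subseteq\Irr\,\CM_{R,\mS}$: if $f$ were redundant, then by Lemma~\ref{a12Sep23} some block $A(\gq)$ could be omitted keeping injectivity, which is the statement $f(R)\cap A(\gq)=0$; but $0\neq f(J_\gq)\subseteq f(R)\cap A(\gq)$, a contradiction. For part~2, note $A\gp A$ is a two-sided ideal of the semisimple ring $A$, hence a sum of blocks $A(\gq)$; by (i) it omits $A(\gp)$, and if it also omitted $A(\gq)$ for some $\gq\neq\gp$ then, since $J_\gq\subseteq\gp$, the nonzero set $f(J_\gq)\subseteq f(\gp)\subseteq A\gp A$ would lie in $A(\gq)$, contradicting $A(\gq)\cap A\gp A=0$. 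Hence $A\gp A=A(1-e_{A(\gp)})=\prod_{\gq\neq\gp}A(\gq)$, which is~2(b); then 2(a) is the standard fact that in the product ring $A$ the left and right annihilators of $\prod_{\gq\neq\gp}A(\gq)$ both equal the complementary block $A(\gp)$, and 2(c) is immediate.

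For 2(d): $\gp\subseteq R\cap A\gp A$ is clear, and conversely if $r\in R$ with $f(r)\in A\gp A=A(1-e_{A(\gp)})$ then $f(r)e_{A(\gp)}=0$, so using $f(J_\gp)\subseteq A(\gp)=e_{A(\gp)}A$ we get $f(rJ_\gp)=f(r)f(J_\gp)\subseteq f(r)e_{A(\gp)}A=0$, hence $rJ_\gp=0$, hence $(RrR)J_\gp=0\subseteq\gp$; as $\gp$ is prime and $J_\gp\not\subseteq\gp$, this gives $RrR\subseteq\gp$, i.e. $r\in\gp$. So $R\cap A\gp A=\gp$, and $f$ induces $R/\gp\hookrightarrow A/A\gp A\simeq A(\gp)$. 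Finally, for part~3, let $\alpha\in\CM_{R,\mS}(f,f')$, so $\alpha f=f'$; its kernel is a two-sided ideal of $A=\prod_\gp A(\gp)$, hence a sum of blocks, and if $A(\gp)\subseteq\ker\alpha$ for some $\gp$ then $f(J_\gp)\subseteq A(\gp)\subseteq\ker\alpha$ forces $f'(J_\gp)=\alpha(f(J_\gp))=0$, impossible since $f'$ is injective and $J_\gp\neq0$. Hence $\ker\alpha=0$, so $\alpha$ is a monomorphism.

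The main obstacle here is not conceptual but careful bookkeeping: one has to keep straight the image $f(\gp)\subseteq A$ against the ideal $\gp\subseteq R$, interpret the annihilator conditions defining naturality correctly, use the central idempotents $e_{A(\gp)}$ consistently, and invoke semiprimeness of $R$ together with Lemma~\ref{b10Sep23} precisely where needed to guarantee $J_\gp\neq0$ and $J_\gp\not\subseteq\gp$ (the degenerate case $|\min(R)|=1$, where $J_\gp=R$, is harmless).
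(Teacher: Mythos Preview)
Your proof is correct. It uses the same pivotal ideal as the paper (the paper writes $\gp^c$ for your $J_\gp$), but organises the argument differently in a few places that are worth noting.

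For part~2 the paper reverses your order: it first proves 2(a) directly from the definition of naturality (from $\rann_A(\gp)=A(\gp)$ being an ideal one gets $\rann_A(A\gp A)=A(\gp)$, and then $\lann=\rann$ since $A$ is semiprime), and deduces 2(b) from 2(a) by the block structure of $A$. You instead identify $A\gp A$ as a sum of blocks and use $J_\gq\subseteq\gp$ to rule out missing blocks, getting 2(b) first and 2(a) as a corollary. Both work; the paper's route is a touch shorter since it never needs to observe that $J_\gq\subseteq\gp$.

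For 2(d) the paper's argument is global rather than pointwise: setting $\gp':=R\cap A\gp A\supseteq\gp$, one has $\bigcap_\gp\gp'\subseteq R\cap\bigcap_\gp A\gp A=0$ (using 2(b)), and then primeness of each $\gp$ forces some $\gq'\subseteq\gp$, hence $\gq\subseteq\gq'\subseteq\gp$, hence $\gq=\gp$ and $\gp'=\gp$. Your direct computation via $rJ_\gp=0$ is equally valid and perhaps more transparent.

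For part~3 the paper takes a different tack: it shows $\alpha$ respects the block decompositions (because $\alpha(\rann_A(\gp))\subseteq\rann_{A'}(\gp)$), so $\alpha=(\alpha_\gp)$ with each $\alpha_\gp:A(\gp)\to A'(\gp)$ a unital ring map from a simple ring, hence injective. Your argument via $\ker\alpha$ being a sum of blocks and $f(J_\gp)\subseteq A(\gp)$ avoids checking that $\alpha$ is block-diagonal, which is a small economy.
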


\begin{proof} 1. By Lemma \ref{b14Sep23}.(5), $\CM_{R, \mS}^c\subseteq \CN_{R, \mS}$.

Suppose that  $f: R\ra A=\prod_{\gp \in \min (R)}A(\gp)\in  \CN_{R, \mS}$.
  Then $\rann_A(\gp)=A(\gp)$ for all $\gp \in \min (R)$, and so $f\in \Irr \,\CM_{R, \mS}$
   since  for all $\gp\in \min (R)$,
   $\gp^c=\bigcap_{\gq\in \min (R)\backslash \{ \gp\} }\gq\neq 0$ (by Lemma \ref{b10Sep23})  and $\gp^c\subseteq \ker (f_\gp)$ where
   $$f_\gp: R\ra A\ra \prod_{\gq \in \min (R)\backslash \{ \gp\} }A(\gq)$$
 and the second map is the projection epimorphism. Therefore $ \CN_{R, \mS}\subseteq \Irr \,\CM_{R, \mS}$.

2(a). Since the right annihilator  $\rann_A(\gp)=A(\gp)$ is an ideal of $A$, we have that  $\rann_A(A\gp A)=A(\gp)$.  Since the ring $A$ is a semiprime ring,   $\lann_A(A\gp A)=\rann_A(A\gp A)=A(\gp)$, and the statement (a) follows.

2(b). Since the ring $A=\prod_{\gp \in \min (R)}A(\gp$ is a finite direct product of simple Artinian rings $A(\gp)$, the statement (b) follows from the statement (a).

2(c). The statement (c) follows from the statement (b): $A/A\gp A=A/A(1-e_{A(\gp)})\simeq A(\gp)$.

2(d). For each $\gp \in \min (R)$, the ideal $\gp':=R\cap A\gp A$ of the ring  $R$ contains the ideal $\gp$.
 Since $f: R \ra A=\prod_{\gp \in \min (R)}A(\gp)\in  \CM_{R, \mS}$,
 $$ \ga:=\bigcap_{\gp\in \min (R)}\gp'\subseteq \ker (f)=\{0\},$$ and so $\ga =\{ 0\}$.
 For each $\gp\in \min (R)$, the inclusion 
$\ga=\{0\}\subseteq \gp$, implies that $$\gq\subseteq \gq'\subseteq \gp\;\; {\rm  for\; some}\;\; \gq\in \min (R).$$ Since the minimal primes are incomparable, we must have $\gq=\gp$ and $\gp'\subseteq \gp$. The reverse inclusion also holds. So $\gp'=\gp$.

3. Let $f: R\ra A=\prod_{\gp \in \min (R)}A(\gp)\in  \CN_{R, \mS}$,  $f': R\ra A'=\prod_{\gp \in \min (R)}A'(\gp)\in  \CN_{R, \mS}$ and $\alpha \in \CM_{R,\mS}(f,f')$. It follows from the equalities $f'=\alpha f$, $A(\gp) = \rann_A(\gp)$ and $A'(\gp) = \rann_{A'}(\gp)$ for all $\gp \in \min (R)$ that  $\alpha =(\alpha_\gp)_{\gp \in \min (R)}$ where   the maps
$$\alpha_\gp : A(\gp)\ra A'(\gp)$$
are monomorphisms  for all $\gp \in \min (R)$ (since the rings $A(\gp)$ are simple). Hence, so is the map $\alpha: A\ra A'$.
\end{proof}

\begin{definition}
Suppose that $R$ is a semiprime ring with  $|\min (R)|<\infty$.  We say that natural $R$-monomorphisms $f\ra A=\prod_{\gp \in \min (R)}A(\gp)$  and $f'\ra A'=\prod_{\gp \in \min (R)}A'(\gp)$ are  {\bf $M$-equivalent} and write $f\stackrel{M}{\sim}f'$   if the rings  $A(\gp)$ and $A'(\gp)$ are Morita equivalent  for all $\gp \in \min (R)$. Let $[f]_M^\sim$ be the $M$-equivalence class of the element $f$ in $\CN_{R, \mS}$.
\end{definition}

The relation $f\stackrel{M}{\sim}f'$ is an equivalence relation which is called the {\bf $M$-equivalence}.  If $f\stackrel{M}{\sim}f'$   then the rings $A$ and $A'$ are Morita equivalent but not vice versa, in general. So, the $M$-equivalence is a finer equivalence relation than the Morita equivalence. The reason for that is that the $M$-equivalence  is the Morita equivalence that preserves/respects an additional structure, namely the product $\prod_{\gp \in \min (R)}A(\gp)$. \\

{\bf Structure of embeddings  $\CM_{R, \mS}$.} 
 For a simple Artinian ring $\L\simeq M_n(D)$, where $D$ is a division ring, the  natural number $s(\L):=n$ is called the {\bf matrix size} of $\L$. The matrix size of $\L$ is the length of a unique (up to isomorphism) simple  left/right $\L$-module. Let  $A=\prod_{i=1}^mA_i$ be  a semisimple Artinian rings where $A_i$ \ae simple Artinian ring. The sum $s(A)=\sum_{i=1}^m s(A_i)$ is called the  {\bf matrix size} of $A$.

Theorem \ref{B9Sep23} is one of the key results of the paper. It captures the essence of the structure of embeddings $\CM_{R, \mS}$. It reveals that left $R$-collections play  the central role in understanding of the structure of embeddings $\CM_{R, \mS}$. It shows that for each  embedding in $f\in \CM_{R, \mS}$ there is a smaller embedding which is  a collection embedding $\phi_V$, and it produces {\em explicit} morphisms in $\Mor (\CM_{R, \mS})$ that proves the smallness.  So, the proof is `constructive'. 

\begin{theorem}\label{B9Sep23}
Suppose that $R$ is a semiprime ring and  $\CM_{R,\mS}\neq \emptyset$. Then 
\begin{enumerate}
\item $|\min (R)|<\infty$.

\item Suppose that  $R\stackrel{f}{\ra}\prod_{i=1}^sA_i\in \Irr\,\CM_{R, \mS}$ is an  irredundant embedding where $A_i\simeq M_{n_i}(D_i)$ are simple Artinian rings and $D_i$ are division rings. Then 
\begin{enumerate}
\item  $s\leq |\min (R)|$,  
\item there exist a  left collection $V\in \CV_l(R)$, a monomorphism  $R\stackrel{g}{\ra}B\times B'\in \CM_{R, \mS}$ where  $R\stackrel{\phi_V}{\ra} B\in  \CM_{R, \mS}$ is the embedding of the collection $V$ and $B'$ is a semisimple Artinian ring such that $$\pr_B\in \CM_{R, \mS}(g,\phi_V),$$  where   $\pr_B:B\times B'\ra B$ is the projection onto $B$, a monomorphism 
 $\alpha  \in \CM_{R, \mS}(g, f)$,   i.e. there is a commutative diagram
 $$\begin{array}[c]{ccc}
 &  & A\;\;\;\\
& {\nearrow}^{f} &{\uparrow}^{\alpha}\\
R&\stackrel{g}{\ra} &B\times B'\\
&{\searrow}^{\phi_V}&{\;\;\;\downarrow}^{\pr_B}\\
&  &B\;\;
\end{array}$$
and

\item  $V=\{\bM_{ij}\, | \, i=1, \ldots , s; j\in J_i\}$ for some nonempty subsets $J_i=\{ 1, \ldots , m_i\}$ of natural numbers where $m_i\leq n_i$, each ${}_R{\bM_{ij}}_{A_i}$ is a simple subfactor of ${}_R{A_i}_{A_i}$ such that 
the map 
$$\coprod_{i=1}^sJ_i\ra  \min (R), \;\; (i,j)\mapsto \ann_R({}_R\bM_{ij})$$
is a bijection. In particular, $|\min (R)|=\sum_{i=1}^s|J_i|$, 
 $$R\, \stackrel{\phi_V}{\ra} \, B=\prod_{i=1}^s\prod_{j\in J_i}\End ({\bM_{ij}}_{A_i})\simeq  \prod_{i=1}^s\prod_{j\in J_i} M_{l_{ij}}(D_i), \;\; l_{ij}:=l({\bM_{ij}}_{A_i}), $$
 $\sum_{j\in J_i}l_{ij}\leq n_i$ for all $i=1, \ldots , s$, and $s(B)=\sum_{i=1}^s\sum_{j\in J_i}l_{ij}\leq s(A)=\sum_{i=1}^sn_i$.

\item For each $i=1, \ldots, s$, the rings $\End ({\bM_{ij}}_{A_i})\simeq M_{l_{ij}}(D_i) $ and $A_i=M_{n_i}(D_i)$ are Morita  equivalent and  $s(\End ({\bM_{ij}}_{A_i}))=l_{ij}\leq n_i = s(A_i)$ for all $j\in J_i$.
\end{enumerate}
\end{enumerate}
\end{theorem}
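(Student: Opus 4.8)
The plan is to build the left $R$-collection $V$ directly out of the composition factors of the bimodule ${}_R(A_i)_{A_i}$, then read off all the claimed numerical consequences. First I would establish part (1): since $\CM_{R,\mS}\neq\emptyset$, pick any $f\colon R\hookrightarrow A$ with $A$ semisimple Artinian; then $0=\gn_A\cap R\supseteq$ (image of the prime radical computations), and more to the point $A$ has only finitely many minimal primes (the finitely many simple factors), so the contraction map sends $\min(A)$ onto a set that must cover $\min(R)$ — using that $\bigcap_{\gp\in\min(R)}\gp=0$ and each contracted prime is semiprime in $R$. Combined with Corollary \ref{a13Sep23} this forces $|\min(R)|<\infty$; alternatively cite Corollary \ref{a13Sep23} directly in the contrapositive. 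Then I would reduce to the irredundant case via Lemma \ref{a12Sep23}, so from now on $R\stackrel{f}{\ra}\prod_{i=1}^sA_i$ is irredundant with $A_i\simeq M_{n_i}(D_i)$.

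The heart of the argument is (2c). Fix $i$. The bimodule ${}_R(A_i)_{A_i}$, viewed as a right $A_i$-module, is $\simeq V(A_i)^{n_i}$, hence has a finite filtration as an $(R,A_i)$-bimodule whose factors ${}_R\bM_{ij}{}_{A_i}$ are simple $(R,A_i)$-bimodules; as right $A_i$-modules each $\bM_{ij}$ is $\simeq V(A_i)^{l_{ij}}$ with $\sum_j l_{ij}\le n_i$ (in fact $=n_i$ if we take a full composition series of ${}_R A_i$, but I only need $\le$ after possibly discarding redundant factors, so I will index by $J_i=\{1,\dots,m_i\}$, $m_i\le n_i$). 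Set $\gp_{ij}:=\ann_R({}_R\bM_{ij})$. Each $\gp_{ij}$ is a prime ideal of $R$: $\bM_{ij}$ is a faithful simple $(R/\gp_{ij},A_i)$-bimodule, so $R/\gp_{ij}$ embeds in $\End(\bM_{ij}{}_{A_i})\simeq M_{l_{ij}}(D_i)$ as in Lemma \ref{b14Sep23}.(4), and simplicity of the bimodule forces $R/\gp_{ij}$ to be prime (a nonzero ideal would cut out a proper subbimodule). Now I claim that, after throwing away repetitions so that the $\gp_{ij}$ occurring are pairwise distinct, the resulting finite set $Q=\{\gp_{ij}\}$ is an irredundant set of prime ideals with zero intersection: zero intersection because $\bigcap_{i,j}\gp_{ij}\subseteq\bigcap_i\ann_R({}_R A_i)=\ker f=0$ (the bimodule $A_i$ is an extension of the $\bM_{ij}$, so its annihilator is $\bigcap_j\gp_{ij}$); irredundancy by a standard argument (if one $\gp'$ were redundant, $\bigcap_{\gq\in Q\setminus\{\gp'\}}\gq=0\subseteq\gp'$ forces $\gq\subseteq\gp'$ for some other $\gq$, contradicting that primes in $Q$ are incomparable — and incomparability I get because they are minimal, which I verify next). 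By Lemma \ref{b10Sep23} the only such set is $\min(R)$, so $Q=\min(R)$ and the map $\coprod J_i\to\min(R)$, $(i,j)\mapsto\gp_{ij}$, is a bijection — in particular $|\min(R)|=\sum|J_i|$, each $\gp_{ij}$ is minimal, and (after the bijection is established) no repetitions were discarded, so $m_i$ is exactly the number of distinct $\gp$'s appearing in a composition series of ${}_R A_i$.

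With $V=\{\bM_{ij}\}$ in hand, the remaining assertions follow formally. The map $\phi_V$ is the collection monomorphism of Definition \ref{collect}–\ref{VRcol-1}; by construction $\End(\bM_{ij}{}_{A_i})\simeq M_{l_{ij}}(D_i)$ and $B=\prod_i\prod_j M_{l_{ij}}(D_i)$, with $\sum_{j\in J_i}l_{ij}\le n_i$ (subfactors of $(A_i){}_{A_i}\simeq V(A_i)^{n_i}$) and hence $s(B)=\sum_{i,j}l_{ij}\le\sum_i n_i=s(A)$; this is (2c) and, since Morita equivalence of $M_{l_{ij}}(D_i)$ with $M_{n_i}(D_i)=A_i$ is immediate from sharing the division ring $D_i$, also (2d). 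Statement (2a), $s\le|\min(R)|$, is then automatic from $|\min(R)|=\sum_{i=1}^s|J_i|\ge\sum_{i=1}^s 1=s$. For (2b) I produce the diagram explicitly: take $g:=(f,\phi_V)\colon R\to A\times B$ (injective since $f$ already is), let $B':=A$, let $\pr_B\colon A\times B\to B$ and $\alpha:=\pr_A\colon A\times B\to A$ be the two projections; then $f=\alpha g$ and $\phi_V=\pr_B\, g$ by definition of $g$, giving the stated commutative diagram with $\pr_B\in\CM_{R,\mS}(g,\phi_V)$ and $\alpha\in\CM_{R,\mS}(g,f)$. (Both projections have kernel meeting $R$ trivially since $g$ is a monomorphism, so they are genuine morphisms in $\CM_{R,\mS}$.)

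The main obstacle is the middle paragraph: showing that the annihilators of the simple bimodule subfactors of ${}_R(A_i)_{A_i}$ are exactly the minimal primes of $R$, each occurring once across all $i$. Primeness of each $\gp_{ij}$ and the zero-intersection property are straightforward, but pinning down that these primes are \emph{minimal} (not merely prime) and that the total multiset has no repetitions is where Lemma \ref{b10Sep23} must be invoked with care — the irredundancy of the set $Q$ has to be argued before one knows the primes are incomparable, so the logical order is: zero intersection $\Rightarrow$ each member of $\min(R)$ lies above some $\gp_{ij}$ $\Rightarrow$ (minimality of members of $\min(R)$) each $\gp\in\min(R)$ equals some $\gp_{ij}$, hence the $\gp_{ij}$ are all minimal and $Q\supseteq\min(R)$; combined with $Q$ consisting of primes with zero intersection, irredundancy and $Q=\min(R)$ follow. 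One must also check that different $i$'s cannot contribute the same minimal prime — this uses irredundancy of $f$: if $\gp=\ann_R({}_R\bM_{ij})=\ann_R({}_R\bM_{i'j'})$ with $i\ne i'$, then both $A_i$ and $A_{i'}$ contain a subfactor annihilated by $\gp$ while $\bigcap_{i}\ann_R({}_RA_i)=0$ would already hold omitting one of them, contradicting irredundancy. This bookkeeping, rather than any hard structural input, is the crux.
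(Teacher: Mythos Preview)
Your overall strategy --- take an $(R,A_i)$-bimodule composition series of each $A_i$, observe that the annihilators $\gp_{ij}:=\ann_R(\bM_{ij})$ of the simple factors are primes with zero intersection, extract an irredundant subfamily and invoke Lemma~\ref{b10Sep23} --- is exactly the paper's approach. However, two steps need repair.

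First, the parenthetical ``the bimodule $A_i$ is an extension of the $\bM_{ij}$, so its annihilator is $\bigcap_j\gp_{ij}$'' is false: the annihilator of a filtered module is only contained in, not equal to, the intersection of the annihilators of its factors (e.g.\ $R=K[x]/(x^2)$ acting on itself). What you actually get is that $\ga:=\bigcap_{i,j}\gp_{ij}$ satisfies $\ga^m A=0$ for $m=\max_i m_i$, whence $\ga^m=0$ in $R$, and then \emph{semiprimeness} of $R$ gives $\ga=0$. This nilpotency step is essential and is how the paper argues; without it you also lack a proof of part~(1), since your contraction sketch fails (contractions of primes along a ring monomorphism need not be prime) and citing Corollary~\ref{a13Sep23} is circular. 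Second, your construction for (2b) produces the wrong $\alpha$. The theorem demands a \emph{monomorphism} $\alpha\in\CM_{R,\mS}(g,f)$, so that the diagram witnesses $\phi_V\le g\le f$; your $g=(f,\phi_V)\colon R\to A\times B$ with $\alpha=\pr_A$ is an epimorphism and only yields $f\le g$ and $\phi_V\le g$, which does not chain to $\phi_V\le f$. The paper's $B'$ is $\prod_i\prod_{j\in CJ_i}\End({\bM_{ij}}_{A_i})$ and its $\alpha$ is the block-diagonal inclusion $B\times B'\hookrightarrow\prod_i\End({A_i}_{A_i})\simeq A$ coming from a right-$A_i$-module splitting ${A_i}_{A_i}\simeq\bigoplus_j\bM_{ij}$. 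Finally, your closing bookkeeping is misdirected: repetitions of a minimal prime across different $i$'s are allowed (one simply omits duplicates when choosing the irredundant $J_i$); what irredundancy of $f$ is really used for is that every $J_i$ is nonempty, via $J_i=\emptyset\Rightarrow\ker\big(R\to\prod_{k\ne i}A_k\big)\subseteq\bigcap_{k\ne i,\,j\in J_k}\gp_{kj}=0$.
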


\begin{proof} The semisimple Artinian algebra $A=\prod_{i=1}^sA_i$ is an  $(R,A)$-bimodule  of finite length since 
$$l({}_RA_A)\leq l(A_A)<\infty.$$ 
The ring   $A=\prod_{i=1}^sA_i$ is a direct product/sum of  $(R,A)$-bimodules $A_i$.
 For each $i=1,\ldots , s$,  there is a finite  descending chain of $(R,A)$-subbimodules of $A_i$,
$${}_R{A_i}_A={}_R{A_i}_{A_i}=M_{i1}\supset\cdots \supset M_{ij}\supset \cdots \supset M_{im_i}\supset M_{i,m_i+1}=\{0\}, $$
with simple factors $\bM_{ij}:=M_{ij}/M_{i,j+1}$, i.e. 
${}_R{\bM_{ij}}_{A_i}$ are {\em simple}.  Hence, 
$$\ga_{ij}:= \ann_R(\bM_{ij})\in \Spec (R)$$ 
where $\ann_R(\bM_{ij})$ is the annihilator of the left $R$-module $\bM_{ij}$. Let   
 $\ga :=\bigcap_{i=1}^s\bigcap_{j=1}^{m_i}\ga_{ij}$ and  $m=\max \{m_i\, | \, i=1, \ldots , s\}$. Then $\ga^m=\ga^m A=\prod_{i=1}^s\ga^mA_i=\prod_{i=1}^s0=0$, and so 
$$\ga =0$$ since the ring $R$ is a semiprime ring.
Hence, there are subsets $J_i\subseteq \{ 1, \ldots , m_i\}$, where $i=1, \ldots , s$, such that the intersection $$\bigcap_{i=1}^s\bigcap_{j\in J_i}\ga_{ij}=0$$
is irredundant.  For each $\gp \in \min (R)$, $$\bigcap_{i=1}^s\bigcap_{j\in J_i}\ga_{ij}=\{0\}\subseteq \gp,$$ and so $\ga_{ij}\subseteq \gp$ for some $(i,j)$,  i.e. $\ga_{ij}= \gp$, by the minimality of $\gp$. Therefore, $|\min (R)|<\infty$.  Then, by Lemma \ref{b10Sep23}, 
$$\min (R)=\{ \ga_{ij}\, | \, i=1,\ldots , s; j\in J_i\}.$$
Notice that ${A_i}_{A_i}\simeq \bigoplus_{j=1}^{m_i}{\bM_{ij}}_{A_i}=\bM_{i,J_i}\oplus \bM_{i, CJ_i}$ where 
$\bM_{i,J_i}:=\bigoplus_{j\in J_i}{\bM_{ij}}_{A_i}$ and $\bM_{i,CJ_i}:=\bigoplus_{j\in CJ_i}{\bM_{ij}}_{A_i}$ where $CJ_i:=\{1, \ldots , m_i\}\backslash J_i$. Now, $\sum_{j\in J_i}l_{ij}\leq n_i$, where $l_{ij}:=l({\bM_{ij}}_{A_i})$,  and 
$$A_i\simeq \End ({A_i}_{A_i})= \End ((\bM_{i,J_i}\oplus \bM_{i, CJ_i})_{A_i})\supseteq  \prod_{j\in J_i}\End ({\bM_{ij}}_{A_i})\times\End({\bM_{i, CJ_i}}_{A_i})$$
is an embedding of the direct product of rings into the ring $A_i$. 
Consider  the semisimple Artinian rings $$B:= \prod_{i=1}^s\prod_{j\in J_i}\End ({\bM_{ij}}_{A_i})\;\; {\rm and}\;\; B':= \prod_{i=1}^s\prod_{j\in CJ_i}\End ({\bM_{ij}}_{A_i})
$$ 
Hence,  
$$ \alpha : B\times B'\ra \prod_{i=1}^sA_i=A$$
is an embedding of the semisimple Artinian ring $B\times B'$ into $A$. Since $\bigcap_{i=1}^s\bigcap_{j\in J_i}\ga_{ij}=0$, there are monomorphisms (diagonal embeddings) 
$$ R\ra\prod_{i=1}^s\prod_{j\in J_i} R/\ga_{ij}\ra B=\prod_{i=1}^s\prod_{j\in J_i}\End ({\bM_{ij}}_{A_i})\simeq   \prod_{i=1}^s\prod_{j\in J_i} M_{l_{ij}}(D_i).
$$
Hence, $V=\{ {}_R{\bM_{ij}}_{A_i}\, | \, i=1,\ldots , s; j\in J_i\}\in \CV_l(R)$ is a left  $R$-collection for the semisimple Artinian ring $B$  and the monomorphism above
 $$R\, \stackrel{\phi_V}{\ra} \, B=\prod_{i=1}^s\prod_{j\in J_i}\End ({\bM_{ij}}_{A_i})$$
is the embedding of the collection $V$.
 Evidently, $R\stackrel{\phi_V}{\ra} B\in \CM_{R, \mS}$ and $R\stackrel{g}{\ra} B\times B'\in \CM_{R, \mS}$ (via diagonal embeddings), $\alpha  \in \CM_{R, \mS}(g, f)$ is a monomorphism and the projection $\pr_B: B\times B'\ra B$ is an epimorphism such that $\pr_B\in \CM_{R, \mS}(g,\phi_V)$.
 
  By the assumption, the embedding $R\stackrel{f}{\ra}A =\prod_{i=1}^sA_i$ is an irredundant embedding. This implies that  $J_i\neq \emptyset$ for all $i=1, \ldots , s$ (otherwise $J_i=\emptyset$ for some $i$ and $R\stackrel{f}{\ra}A =\prod_{j\neq i}^sA_j$ is an embedding, a contradiction). Therefore, $s=|\min (R)|$.
  Now, the statement (d) is obvious. 
\end{proof}

\begin{corollary}\label{a13Sep23}
A semiprime ring with infinitely many minimal primes cannot be embedded into a semisimple Artinian ring. 
\end{corollary}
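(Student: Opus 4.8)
The statement is the contrapositive of Theorem~\ref{B9Sep23}.(1), so the plan is simply to invoke that result: if $R$ is a semiprime ring with $\CM_{R,\mS}\neq\emptyset$, then Theorem~\ref{B9Sep23}.(1) gives $|\min(R)|<\infty$; hence a semiprime ring with $|\min(R)|=\infty$ must have $\CM_{R,\mS}=\emptyset$, which is exactly the claim.

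For completeness I would also recall the short self-contained argument behind Theorem~\ref{B9Sep23}.(1), in case one wants the corollary to stand on its own. Suppose $R\stackrel{f}{\ra}A=\prod_{i=1}^sA_i\in\CM_{R,\mS}$ with $A_i$ simple Artinian. Since ${}_RA_A$ has finite length, each $(R,A_i)$-bimodule $A_i$ admits a finite filtration with simple subfactors $\bM_{ij}$, $j=1,\dots,m_i$. First I would note that each $\ga_{ij}:=\ann_R(\bM_{ij})$ is a prime ideal of $R$: the annihilator in $R$ of a simple $(R,A_i)$-bimodule is prime, because if $IJ\bM_{ij}=0$ for ideals $I,J$ of $R$ with $J\bM_{ij}\neq0$, then $J\bM_{ij}=\bM_{ij}$ by simplicity and hence $I\bM_{ij}=0$. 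Next, setting $\ga:=\bigcap_{i,j}\ga_{ij}$ and $m:=\max_im_i$, the filtrations yield $\ga^mA_i=0$ for every $i$, hence $\ga^mA=0$; evaluating this at $1_A$ and using injectivity of $f$ gives $\ga^m\subseteq\ker f=0$, so $\ga=0$ because $R$ is semiprime. Thus a \emph{finite} intersection of prime ideals of $R$ equals zero.

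Finally, for any $\gp\in\min(R)$ the inclusion $\bigcap_{i,j}\ga_{ij}=0\subseteq\gp$ forces $\ga_{ij}\subseteq\gp$ for some $(i,j)$, whence $\gp=\ga_{ij}$ by minimality; therefore $\min(R)\subseteq\{\ga_{ij}\}$ is finite. The only points requiring any care are the primeness of $\ann_R(\bM_{ij})$ and the implication $\ga^mA=0\Rightarrow\ga^m\subseteq\ker f$; both are routine, and the remainder is bookkeeping already carried out in the proof of Theorem~\ref{B9Sep23}. I do not expect a genuine obstacle here, since the corollary is a direct reading of that theorem.
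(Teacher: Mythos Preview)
Your proposal is correct and matches the paper's proof exactly: the paper simply writes ``The corollary follows from Theorem~\ref{B9Sep23}.(1).'' Your optional self-contained recap also mirrors the paper's own proof of Theorem~\ref{B9Sep23}.(1) step for step (finite bimodule filtration, primeness of the annihilators $\ga_{ij}$, nilpotence of $\ga=\bigcap\ga_{ij}$ forcing $\ga=0$, then each minimal prime containing some $\ga_{ij}$), so nothing further is needed.
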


\begin{proof} The corollary follows from Theorem \ref{B9Sep23}.(1).
\end{proof}

\begin{corollary}\label{a14Sep23}
Suppose that $R$ is a semiprime ring with $|\min (R)|<\infty$ and $R\stackrel{f}{\ra}\prod_{i=1}^sA_i\in \CM_{R, \mS}$ is an   embedding where $A_i$ are simple Artinian rings.  If $s>|\min (R)|$ then the   embedding $f$ is redundant.
\end{corollary}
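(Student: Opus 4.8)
The plan is to contrapose: assuming $f$ is irredundant, I would show $s\leq|\min(R)|$, which is exactly the contrapositive of the statement (``if $s>|\min(R)|$ then $f$ is redundant'' becomes ``if $f$ is irredundant then $s\leq|\min(R)|$''). This reduction is the natural move because Theorem \ref{B9Sep23} is stated for \emph{irredundant} embeddings, and part (2a) of that theorem gives precisely the bound $s\leq|\min(R)|$. So the substantive content is already packaged in Theorem \ref{B9Sep23}.(2a).

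First I would observe that since $R$ is semiprime and $\CM_{R,\mS}\neq\emptyset$ (the map $f$ witnesses this), the hypothesis $|\min(R)|<\infty$ is consistent with Theorem \ref{B9Sep23}.(1); in fact we are given it directly. Next, suppose for contradiction that $f$ is irredundant. Then $R\stackrel{f}{\ra}\prod_{i=1}^sA_i\in\Irr\,\CM_{R,\mS}$, so Theorem \ref{B9Sep23}.(2a) applies and yields $s\leq|\min(R)|$, contradicting the assumption $s>|\min(R)|$. Hence $f$ must be redundant.

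Alternatively, and perhaps more cleanly for the write-up, one can argue directly without contradiction: if $f$ were irredundant, Theorem \ref{B9Sep23}.(2a) forces $s\leq|\min(R)|$; since we assume $s>|\min(R)|$, the embedding $f$ cannot be irredundant, i.e.\ it is redundant by definition. (One may additionally invoke Lemma \ref{a12Sep23} to note that then there is a proper subset $I\subsetneq\{1,\ldots,s\}$ with $f_I:=\pr_I f$ irredundant and $f_I\leq f$, making the redundancy explicit, though this is not needed for the bare statement.)

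There is essentially no obstacle here: the corollary is a direct logical consequence of Theorem \ref{B9Sep23}.(2a) together with the definition of ``redundant.'' The only thing to be careful about is the direction of the implication and the fact that Theorem \ref{B9Sep23}.(2) is stated under the standing hypothesis $\CM_{R,\mS}\neq\emptyset$, which is automatic here since the embedding $f$ exists. A one- or two-line proof suffices.
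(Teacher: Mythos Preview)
Your proposal is correct and matches the paper's own proof essentially line for line: assume $f$ is irredundant, apply Theorem \ref{B9Sep23}.(2a) to get $s\leq|\min(R)|$, and derive a contradiction. The paper's proof is indeed just those two sentences.
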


\begin{proof} By Theorem \ref{B9Sep23}.(1), $|\min (R)|<\infty$. Suppose that the embedding $f$ is irredundant. Then  $s\leq |\min (R)|$ (Theorem \ref{B9Sep23}.(2a)), a contradiction. Therefore, the   embedding $f$ is redundant.
\end{proof}

For a not necessarily irredundant embedding in $\CM_{R, \mR}$, Corollary \ref{a9Sep23} displays a diagram  of how to produce a smaller collection embedding. The result is a direct corollary of  Theorem \ref{B9Sep23}.(2a). 

\begin{corollary}\label{a9Sep23}
We keep the notation of Theorem \ref{B9Sep23}.(2). Suppose that $R$ is a semiprime ring with $|\min (R)|<\infty$ and  $R\stackrel{f'}{\ra}A'\in \CM_{R, \mS}$. Then $A'=A\times A''$ for some semisimple Artinian rings $A$ and $A''$ such that 
  $$R\stackrel{f}{\ra}A\in \Irr\, \CM_{R, \mS}$$ is  an irredundant embedding, $f=\pr_A f'$, 
 $\pr_A\in \CM_{R, \mS}(f',f) $ is the projection of $A'$ onto $A$.  There exist  a  left collection $V\in \CV_l(R)$, a monomorphism  $R\stackrel{g}{\ra}B\times B'\in \CM_{R, \mS}$ where  $R\stackrel{\phi_V}{\ra} B\in  \CM_{R, \mS}$ is the embedding of the collection $V$ and $B'$ is a semisimple Artinian ring such that 
  $$\pr_B\in \CM_{R, \mS}(g,\phi_V),$$ where $\pr_B:B\times B'\ra B$ is the projection onto $B$,  $\alpha  \in \CM_{R, \mS}(g, f)$ is a monomorphism, i.e. there is a commutative diagram
 $$\begin{array}[c]{ccc}
  &  & A'\;\;\;\\
& {\nearrow}^{f'} &{\downarrow}^{\pr_A}\\
 &  & A\;\;\;\\
& {\nearrow}^{f} &{\uparrow}^{\alpha}\\
R&\stackrel{g}{\ra} &B\times B'\\
&{\searrow}^{\phi_V}&{\;\;\;\downarrow}^{\pr_B}\\
&  &B\;\;
\end{array}$$

\end{corollary}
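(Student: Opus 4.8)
The plan is to derive this as a direct consequence of Theorem \ref{B9Sep23}.(2), which already contains essentially all of the content; the only additional ingredient is the reduction of an arbitrary embedding to an irredundant one, and this is exactly Lemma \ref{a12Sep23}.

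First I would write $A'=\prod_{i=1}^nA_i'$ as a finite direct product of simple Artinian rings $A_i'$. If the embedding $R\stackrel{f'}{\ra}A'$ happens to be irredundant, then set $A:=A'$, $A'':=0$, $f:=f'$ and take $\pr_A:=\id_{A'}$; trivially $A'=A\times A''$, $f=\pr_Af'$ and $\pr_A\in\CM_{R,\mS}(f',f)$. Otherwise $f'$ is redundant, and Lemma \ref{a12Sep23} supplies a proper subset $I\subseteq\{1,\ldots,n\}$ for which $R\stackrel{f_I}{\ra}\prod_{i\in I}A_i'\in\Irr\,\CM_{R,\mS}$, where $f_I=\pr_If'$ with $\pr_I$ the projection epimorphism, and $f_I\leq f'$. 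Setting $A:=\prod_{i\in I}A_i'$, $A'':=\prod_{i\notin I}A_i'$, $f:=f_I$ and $\pr_A:=\pr_I$, one gets $A'=A\times A''$, $R\stackrel{f}{\ra}A\in\Irr\,\CM_{R,\mS}$, $f=\pr_Af'$, and $\pr_A\in\CM_{R,\mS}(f',f)$ is the projection of $A'$ onto $A$. This is the top part of the required diagram.

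Next I would feed the irredundant embedding $R\stackrel{f}{\ra}A$ into Theorem \ref{B9Sep23}.(2); this is legitimate since $\CM_{R,\mS}\neq\emptyset$ (witnessed by $f'$) and $|\min(R)|<\infty$ (by hypothesis, or by Theorem \ref{B9Sep23}.(1)). Part (2b) then produces, in the notation of that theorem, a left collection $V\in\CV_l(R)$, a semisimple Artinian ring $B'$, the embedding $R\stackrel{\phi_V}{\ra}B$ of the collection $V$, a monomorphism $R\stackrel{g}{\ra}B\times B'\in\CM_{R,\mS}$ with $\pr_B\in\CM_{R,\mS}(g,\phi_V)$ for the projection $\pr_B:B\times B'\ra B$, and a monomorphism $\alpha\in\CM_{R,\mS}(g,f)$; in particular $f=\alpha g$ and $\phi_V=\pr_Bg$.

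Finally I would paste together the three commuting triangles $f=\pr_Af'$, $f=\alpha g$ and $\phi_V=\pr_Bg$ into the displayed commutative diagram, which finishes the proof. There is no genuine obstacle here: the argument is a repackaging of Theorem \ref{B9Sep23}, and the only point requiring attention is the degenerate case $A''=0$ (that is, $f'$ already irredundant), which is disposed of by the trivial choice made above so that the decomposition $A'=A\times A''$ still makes sense.
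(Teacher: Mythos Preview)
Your proposal is correct and follows essentially the same approach as the paper: reduce $f'$ to an irredundant embedding $f$ (the paper states this directly, you invoke Lemma \ref{a12Sep23} explicitly and handle the degenerate case $A''=0$), then apply Theorem \ref{B9Sep23}.(2b) to $f$. The paper's proof is a two-sentence version of what you wrote.
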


\begin{proof}  Suppose that  $R\stackrel{f'}{\ra}A'\in \CM_{R, \mS}$. Then $A'=A\times A''$ for some semisimple Artinian rings $A$ and $A''$ such that 
  $R\stackrel{f}{\ra}A\in \CM_{R, \mS}$ is  an irredundant embedding, $f=\pr_A f'$ and  
 $\pr_A\in \CM_{R, \mS}(f',f) $ is the projection of $A'$ onto $A$. Now, the result  follows from Theorem \ref{B9Sep23}.(2b).
 \end{proof}
 
\begin{definition}
Suppose that $R$ is a semiprime ring with  $|\min (R)|<\infty$.  We say that an irredundant  monomorphism $f: R\ra A=\prod_{i=1}^sA_i\in \Irr\, \CM_{R, \mS}$ is  {\bf of maximal size} or {\bf has the maximal size} if  $s=|\min (R)|$. 
\end{definition}
Every  natural $R$-monomorphism  is an irreducible  $R$-monomorphism of maximal size (Proposition \ref{a16Sep23}.(1)).

\begin{corollary}\label{b16Sep23}
 We keep the notation of Theorem \ref{B9Sep23}. Suppose that $R$ is a semiprime ring with $s=|\min (R)|<\infty$.
 Suppose that the embedding  $f: R\ra A=\prod_{i=1}^sA_i\in \Irr\, \CM_{R,\mS}$ is of maximal size. Then  $J_i=\{(i,1)\}$ for all $i=1, \ldots , s$, $\min (R)=\{ \ann_R(\bM_{i1})\, | \, i=1, \ldots , s\}$, 
$$R\stackrel{\phi_V}{\ra} B=\prod_{i=1}^s\End ({\bM_{i1}}_{A_i})\simeq \prod_{i=1}^sM_{l_{i1}}(D_i)\;\; {\rm  and}\;\;l_{i1}\leq n_i\;\; {\rm  for\; all}\;\; i=1, \ldots , s,$$ $\phi_V\leq f$, and $[f]_M^\sim =[\phi_V]_M^\sim $. So, for every irredundant $R$-monomorphism of maximals size $f$, there is a smaller, natural, $M$-equivalent  collection  $R$-monomorphism 
$\phi_V$ such that the matrix size of each simple artinian ring of $\phi_V$ does not exceed the matrix size of the corresponding simple Artinian ring of $f$.

\end{corollary}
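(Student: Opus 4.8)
The plan is to derive Corollary \ref{b16Sep23} directly from Theorem \ref{B9Sep23}.(2) by specializing to the case $s = |\min (R)|$. Since the embedding $f$ is irredundant and of maximal size, all the data produced in the statement of Theorem \ref{B9Sep23}.(2c) is already in hand; the work is to observe that maximality forces each fibre $J_i$ to be a singleton.

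First I would invoke Theorem \ref{B9Sep23}.(2c): there exist nonempty subsets $J_i = \{1, \ldots , m_i\} \subseteq \{1, \ldots , n_i\}$ with simple subfactors ${}_R{\bM_{ij}}_{A_i}$ of ${}_R{A_i}_{A_i}$, such that the map $\coprod_{i=1}^s J_i \ra \min (R)$, $(i,j) \mapsto \ann_R({}_R\bM_{ij})$, is a bijection. Taking cardinalities gives $|\min (R)| = \sum_{i=1}^s |J_i|$. Since the $J_i$ are nonempty we have $|J_i| \geq 1$ for each $i$, hence $\sum_{i=1}^s |J_i| \geq s = |\min (R)|$, with equality. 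Therefore $|J_i| = 1$ for all $i$, i.e.\ $J_i = \{1\}$ (equivalently $m_i = 1$ after re-indexing the chain so the retained factor is the first one). The statement "$J_i = \{(i,1)\}$" in the corollary is just this, under the identification of $J_i$ with its image in $\coprod_k J_k$. The bijection then reads $\min (R) = \{\ann_R(\bM_{i1}) \mid i = 1, \ldots , s\}$.

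Next I would copy over the remaining conclusions of Theorem \ref{B9Sep23}.(2c) and (2d) under this specialization. The collection is $V = \{{}_R{\bM_{i1}}_{A_i} \mid i = 1, \ldots , s\} \in \CV_l(R)$, and the embedding of the collection is
$$R \stackrel{\phi_V}{\ra} B = \prod_{i=1}^s \End ({\bM_{i1}}_{A_i}) \simeq \prod_{i=1}^s M_{l_{i1}}(D_i),$$
with $l_{i1} = l({\bM_{i1}}_{A_i})$. The inequality $\sum_{j \in J_i} l_{ij} \leq n_i$ of Theorem \ref{B9Sep23}.(2c) collapses (the sum has one term) to $l_{i1} \leq n_i$ for all $i$. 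By Theorem \ref{B9Sep23}.(2d), each $\End ({\bM_{i1}}_{A_i}) \simeq M_{l_{i1}}(D_i)$ is Morita equivalent to $A_i = M_{n_i}(D_i)$, and $s(\End ({\bM_{i1}}_{A_i})) = l_{i1} \leq n_i = s(A_i)$; this is precisely the assertion that the matrix size of each simple Artinian factor of $\phi_V$ does not exceed that of the corresponding factor of $f$. Since the $\End ({\bM_{i1}}_{A_i})$ and $A_i$ are Morita equivalent for every $i$, by the definition of $M$-equivalence we get $\phi_V \stackrel{M}{\sim} f$, i.e.\ $[f]_M^\sim = [\phi_V]_M^\sim$.

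Finally, $\phi_V \leq f$ follows from the commutative diagram of Theorem \ref{B9Sep23}.(2b): there is $g \colon R \ra B \times B'$ with $\pr_B \in \CM_{R,\mS}(g, \phi_V)$ an epimorphism and $\alpha \in \CM_{R,\mS}(g,f)$ a monomorphism, so $\phi_V \leq g \leq f$ in the partial order on $\Ob(\CM_{R,\mS})$, whence $\phi_V \leq f$ by transitivity. There is essentially no obstacle here; the only point requiring a moment's care is the bookkeeping identification of the abstract index set $J_i$ with the pair notation "$\{(i,1)\}$", and noting that $B' = \prod_{i=1}^s \prod_{j \in CJ_i} \End(\bM_{ij\,A_i})$ of Theorem \ref{B9Sep23} is now $\prod_{i=1}^s \End(\bM_{i,CJ_i\,A_i})$ with $CJ_i = \{1,\ldots,m_i\} \setminus J_i$ — which may be empty for some or all $i$, but this does not affect the argument.
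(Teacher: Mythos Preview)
Your proposal is correct and follows the same route as the paper: specialize Theorem \ref{B9Sep23}.(2) under the hypothesis $s=|\min(R)|$, forcing each $|J_i|=1$ via the counting argument, and read off the remaining claims from parts (2b)--(2d). The paper's own proof is a one-line citation of Theorem \ref{B9Sep23}.(2), the equality $s=|\min(R)|$, and Proposition \ref{a16Sep23}.(1); the last reference (equivalently Lemma \ref{b14Sep23}.(5)) supplies the word ``natural'' for $\phi_V$ via the inclusion $\CM_{R,\mS}^c\subseteq \CN_{R,\mS}$, which you use implicitly but could state explicitly.
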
 

\begin{proof}  The corollary  follows from the inclusion $\CM_{R, \mS}^c\subseteq \CN_{R, \mS}$ (Proposition  \ref{a16Sep23}.(1),  Theorem \ref{B9Sep23}.(2) and the equality $s=|\min (R)|$.
\end{proof}

 {\bf Description of minimal embeddings in  an $M$-equivalence class of embeddings.}
\begin{definition}
Suppose that $R$ is a semiprime ring with $|\min (R)|<\infty$. A natural  embedding $R\stackrel{f}{\ra}A=\prod_{\gp \in \min (R)}A(\gp)\in \CN_{R,\mS}$ is called an {\bf elementary embedding} if
${}_R{A(\gp)}_{A(\gp)}$ is a simple bimodule for all $\gp \in \min (R)$. The class of  elementary embedding is denoted by $\CN_{R, \mS}^e$.
\end{definition}

$\bullet$ {\em Every elementary embedding $R\stackrel{f}{\ra}A=\prod_{\gp \in \min (R)}A(\gp)\in \CN_{R,\mS}^e$ is a collection embedding}, 
\begin{equation}\label{f=pVf}
f=\phi_{V_f},
\end{equation}
{\em for the left $R$-collection $V_f:=\{ V_f(\gp ):= A(\gp)\, | \, \gp \in  \min (R)\}\in \CV_l(R)$ since $\End (A(\gp)_{A(\gp)})=A(\gp)$ for all} $\gp \in\min (R)$.

Notice that 
\begin{equation}\label{f=pVf-1}
\CN_{R, \mS}^e\subseteq \CM_{R, \mS}^c\subseteq \CN_{R, \mS}\subseteq \Irr \CM_{R, \mS}.
\end{equation}

\begin{definition}
Let $\CC$ be a class of  morphisms in the category $\CM_{R,\mS}$. We say that the class $\CC$ satisfies the {\bf ascending chain condition} (a.c.c.) if any chain of morphisms in $\CC$, 
$$\cdots \ra A_2\stackrel{f_2}{\ra}A_1\stackrel{f_1}{\ra}A_0,$$
stabilizes, meaning all but finitely many homomorphisms $f_i$ are isomorphisms.
\end{definition}

\begin{theorem}\label{15Sep23}
Suppose that $R$ is a semiprime ring with $|\min (R)|<\infty$ and $C$ be an $M$-equivalence class in $\CN_{R, \mS}$. Then 
\begin{enumerate}

\item For every $f\in C$, there is an element $f'\in C\cap \CN_{R,\mS}^e$ such that $f'\leq f$.

\item $\min (C)\subseteq C\cap \CN_{R,\mS}^e\neq \emptyset$, i.e. the minimal embeddings  in the class $C$ (if exist) are  elementary embeddings that belong to the class $C$.

\item If  $C\cap \CN_{R,\mS}^e$ satisfies the a.c.c. then $\min (C)\neq \emptyset$.

\end{enumerate}
\end{theorem}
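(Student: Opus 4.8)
\emph{Proof proposal.} I would treat the three parts in order, with (1) carrying the substance and (2)--(3) following from it together with Proposition~\ref{a16Sep23}.

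\emph{Part (1).} The plan is a well-founded induction on the matrix size $s(t(f))$ of the target, a positive integer. If $f$ is already elementary, take $f'=f$. Otherwise $f$ is natural, hence an irredundant embedding of maximal size $s=|\min(R)|$ (Proposition~\ref{a16Sep23}.(1)), so Corollary~\ref{b16Sep23} applies and yields a collection embedding $\phi_V$ with $\phi_V\leq f$ and $[\phi_V]_M^\sim=[f]_M^\sim$; in particular $\phi_V\in C$, and being a collection embedding it lies in $\CN_{R,\mS}$. Its target is $\prod_{i=1}^s M_{l_{i1}}(D_i)$ with $l_{i1}=l({\bM_{i1}}_{A_i})\leq n_i$, where each ${}_R{\bM_{i1}}_{A_i}$ is a simple bimodule subfactor of ${}_R{A_i}_{A_i}$. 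Since $f$ is not elementary, ${}_R{A_{i_0}}_{A_{i_0}}$ is not a simple bimodule for some $i_0$; its right $A_{i_0}$-length is $n_{i_0}$ but its bimodule composition length is $\geq 2$, so every simple bimodule subfactor of it --- in particular $\bM_{i_01}$ --- has right length strictly less than $n_{i_0}$. Hence $s(t(\phi_V))=\sum_i l_{i1}<\sum_i n_i=s(t(f))$, and the induction hypothesis applied to $\phi_V$ (which lies in the same class $C$) produces an elementary $f'\in C$ with $f'\leq\phi_V\leq f$; transitivity of $\leq$ finishes the step.

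\emph{Part (2).} Since $C$ is an $M$-equivalence class it is non-empty, so $C\cap\CN_{R,\mS}^e\neq\emptyset$ by Part (1). If $g\in\min(C)$, Part (1) gives $f'\in C\cap\CN_{R,\mS}^e$ with $f'\leq g$; minimality of $g$ together with antisymmetry of $\leq$ forces $f'=g$, so $g$ is elementary. Hence $\min(C)\subseteq C\cap\CN_{R,\mS}^e$.

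\emph{Part (3).} First, a minimal element $g$ of the poset $(C\cap\CN_{R,\mS}^e,\leq)$ is automatically minimal in $C$: if $h\in C$ satisfied $h<g$, then by Part (1) some elementary $h'\in C$ would satisfy $h'\leq h<g$, hence $h'<g$, contradicting minimality of $g$ in $C\cap\CN_{R,\mS}^e$. So it suffices to produce a minimal element of $C\cap\CN_{R,\mS}^e$. Assume there is none; as this set is non-empty, every element has a strictly smaller one inside it, and dependent choice yields an infinite strictly descending chain $f_0>f_1>f_2>\cdots$ in $C\cap\CN_{R,\mS}^e$. The plan is to convert this into an infinite chain of non-isomorphism morphisms in the subcategory on $C\cap\CN_{R,\mS}^e$, contradicting the a.c.c.\ hypothesis. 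For each $i$, I would unwind $f_{i+1}\leq f_i$ into a zig-zag of the two one-step relations defining $\leq$ and collapse it --- pulling subrings back through quotients and conversely, as described after the definition of $\leq$ --- to a single step: $f_{i+1}$ is isomorphic, as an object of $\CM_{R,\mS}$, to $R\xrightarrow{f_i}B\twoheadrightarrow B/I$ for a semisimple Artinian subring $B$ with $\im(f_i)\subseteq B\subseteq t(f_i)$ and an ideal $I\triangleleft B$ with $I\cap\im(f_i)=0$. Using that $f_i$ and $f_{i+1}$ are natural --- so both targets split over $\min(R)$, $R/\gp$ embeds into each simple factor (Proposition~\ref{a16Sep23}.(2)), and morphisms between natural embeddings are componentwise ring monomorphisms (Proposition~\ref{a16Sep23}.(3)) --- one straightens this datum into an honest unital ring monomorphism $\alpha_i\colon t(f_{i+1})\to t(f_i)$ over $R$, i.e.\ $\alpha_i\in\CM_{R,\mS}(f_{i+1},f_i)$; it is not an isomorphism, since otherwise $f_i\leq f_{i+1}$, contradicting $f_{i+1}<f_i$. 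The resulting chain $\cdots\xrightarrow{\alpha_2}t(f_2)\xrightarrow{\alpha_1}t(f_1)\xrightarrow{\alpha_0}t(f_0)$ of non-isomorphisms violates the a.c.c., and the contradiction gives $\min(C)\neq\emptyset$.

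\emph{Main obstacle.} The one genuinely nontrivial point is the last straightening in Part (3): that $f_{i+1}\leq f_i$ between \emph{natural} embeddings is realised by a bona fide morphism $t(f_{i+1})\to t(f_i)$ rather than only by a zig-zag that temporarily leaves $\CN_{R,\mS}$. I expect this to require (i) the collapse of an arbitrary $\leq$-chain to one ``pass to a semisimple Artinian subring, then pass to a quotient'' step; (ii) a componentwise normal form for such data over a natural embedding, using that $R/\gp$ sits inside each simple factor and that the ideal $I$ can be corrected so that $B/I$ again splits over $\min(R)$; and (iii) the Double Centralizer Theorem (Proposition~\ref{A14Sep23}.(2)) to reassemble a unital embedding $B/I\hookrightarrow t(f_i)$ compatible with $f_i$. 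Once this comparison lemma for natural embeddings is in place, Parts (1)--(3) go through as above and everything else is bookkeeping; note also that it is exactly this step where the a.c.c.\ hypothesis is unavoidable, since within a fixed $M$-class a strict $\leq$ need not decrease the matrix size (a simple Artinian ring $M_n(D)$ can admit a non-surjective unital self-embedding when $D$ does).
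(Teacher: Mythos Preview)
Your proof of Parts (1) and (2) is essentially the paper's own argument: iterate Corollary~\ref{b16Sep23} (equivalently, induct on the total matrix size $s(t(f))$), observe that the process stays in the same $M$-class and must terminate at an elementary embedding, then deduce (2) from (1). The paper phrases (1) as ``repeating the same step several more times we obtain a chain\ldots that clearly must terminate (since a finite number of sizes cannot decrease indefinitely)'' and (2) as ``follows from statement~1,'' which is exactly your argument.

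For Part (3), the paper's entire proof is the single word ``Straightforward.'' You work much harder and correctly identify the non-obvious point: the relation $f_{i+1}<f_i$ is in the \emph{partial order} generated by one-step epimorphisms and monomorphisms, so a priori it is witnessed only by a zig-zag that may pass through non-natural intermediate objects, whereas the a.c.c.\ hypothesis is stated for chains of genuine \emph{morphisms} in $\CM_{R,\mS}$ between objects of $C\cap\CN_{R,\mS}^e$. Your observation that Proposition~\ref{a16Sep23}.(3) forces any such morphism to be a monomorphism is the right starting point. That said, your three-step plan (collapse to subring-then-quotient, componentwise normalisation over $\min(R)$, then Double Centralizer to rebuild a unital embedding) is speculative and likely more machinery than the author has in mind; given how terse the paper is here, the intended reading is probably either that the a.c.c.\ is tacitly the descending chain condition on the poset $(C\cap\CN_{R,\mS}^e,\leq)$ itself, or that for natural embeddings the zig-zag collapses trivially (since an epimorphism between natural embeddings is already an isomorphism by Proposition~\ref{a16Sep23}.(3)). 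In short: your Parts (1)--(2) match the paper; for Part (3) you are more scrupulous than the paper, and the ``main obstacle'' you flag is a genuine gap in the paper's one-word proof rather than in your approach.
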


\begin{proof} 1.  Suppose that $f\in C$. By Corollary \ref{b16Sep23}, there is a smaller, natural, $M$-equivalent  collection  $R$-monomorphism 
$\phi_{V_1}\in C$ such that the size of each simple artinian ring of $\phi_V$ does not exceed the size of the corresponding simple Artinian ring of $f$. Repeating the same  step several more times we obtain a chain strictly decreasing embeddings in the class $C$, say
$$f\geq \phi_{V_1}> \phi_{V_2}> \cdots ,$$
 that clearly must terminate (since a finite number of sizes cannot decrease indefinitely) at an elementary embedding (otherwise we could have found a strictly smaller embedding, see the proof of Theorem \ref{B9Sep23}.(2)). 

2.  Statement 2 follows from statement 1.

3. Straightforward.
\end{proof}

\begin{corollary}\label{e15Sep23}
Suppose that $R$ is a semiprime ring with $|\min (R)|<\infty$. Then $$\min \CM_{R,\mS}=\min \CN_{R,\mS}=\min \CN_{R,\mS}^e.$$
\end{corollary}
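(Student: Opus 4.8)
The plan is to deduce all three equalities from one elementary order-theoretic observation together with the two \emph{descent} results already established, Corollary~\ref{a9Sep23} and Theorem~\ref{15Sep23}. Throughout, $\CM_{R,\mS}$, $\CN_{R,\mS}$ and $\CN_{R,\mS}^e$ are regarded as posets on isomorphism classes of objects, ordered by $\leq$; recall the inclusions $\CN_{R,\mS}^e\subseteq\CM_{R,\mS}^c\subseteq\CN_{R,\mS}\subseteq\Irr\,\CM_{R,\mS}\subseteq\CM_{R,\mS}$ from~(\ref{f=pVf-1}), and note that each of these subclasses is closed under isomorphism in $\CM_{R,\mS}$. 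If $\CM_{R,\mS}=\emptyset$ all three classes are empty and there is nothing to prove, so assume $\CM_{R,\mS}\neq\emptyset$; then $|\min(R)|<\infty$ by Theorem~\ref{B9Sep23}.(1) (this is in any case our standing hypothesis).

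The first step is to record the following elementary fact: \emph{if $T\subseteq S$ are subsets of a poset such that every element of $S$ is $\geq$ some element of $T$, then $\min S\subseteq T$ and $\min S=\min T$.} A minimal $m\in S$ dominates some $t\in T$ with $t\leq m$, and minimality of $m$ forces $t=m$ (as isomorphism classes), hence $m\in T$ and, since $T\subseteq S$, also $m\in\min T$; thus $\min S\subseteq\min T$. Conversely, if $t\in\min T$ and some $s\in S$ had $s<t$, then by hypothesis some $t'\in T$ would satisfy $t'\leq s<t$, contradicting minimality of $t$ in $T$; hence $\min T\subseteq\min S$.

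The substantive step is the cofinality claim: \emph{every $f\in\CM_{R,\mS}$ is $\geq$ some elementary embedding.} Given such an $f$, Corollary~\ref{a9Sep23} produces a collection embedding $\phi_V$ with $\phi_V\leq f$, and $\phi_V\in\CM_{R,\mS}^c\subseteq\CN_{R,\mS}$ by~(\ref{f=pVf-1}); applying Theorem~\ref{15Sep23}.(1) to $\phi_V$ inside its $M$-equivalence class yields an elementary embedding $h$ with $h\leq\phi_V\leq f$. The elementary fact, applied with $S=\CM_{R,\mS}$ and $T=\CN_{R,\mS}^e$, now gives $\min\CM_{R,\mS}\subseteq\CN_{R,\mS}^e$ and $\min\CM_{R,\mS}=\min\CN_{R,\mS}^e$. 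For the remaining equality, apply the fact with $S=\CN_{R,\mS}$ and $T=\CN_{R,\mS}^e$: here the cofinality hypothesis is exactly Theorem~\ref{15Sep23}.(1) (every natural embedding dominates an elementary embedding of its own $M$-class), so $\min\CN_{R,\mS}=\min\CN_{R,\mS}^e$. Combining the two, $\min\CM_{R,\mS}=\min\CN_{R,\mS}=\min\CN_{R,\mS}^e$.

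I do not anticipate a genuine obstacle, since the mathematical content is already carried by Theorem~\ref{B9Sep23}, Corollary~\ref{a9Sep23} and Theorem~\ref{15Sep23}; the corollary is essentially a bookkeeping assembly. The two points to handle with care are: working consistently with isomorphism classes, so that ``$h\leq f$ with $f$ minimal'' legitimately forces $h\cong f$ --- this is what makes a minimal embedding automatically \emph{elementary} rather than merely dominating an elementary one; and noting that the collection embedding supplied by Corollary~\ref{a9Sep23} indeed lies in $\CN_{R,\mS}$, which is immediate from~(\ref{f=pVf-1}) (equivalently Proposition~\ref{a16Sep23}.(1)).
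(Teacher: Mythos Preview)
Your proposal is correct and follows essentially the same strategy as the paper: both arguments reduce the corollary to the cofinality of $\CN_{R,\mS}^e$ (from below) inside $\CM_{R,\mS}$, which in turn is obtained from Theorem~\ref{B9Sep23} (via its corollaries) and Theorem~\ref{15Sep23}.(1). Your version is in fact somewhat more explicit than the paper's one-line proof: you isolate the order-theoretic lemma cleanly, and you invoke Corollary~\ref{a9Sep23} (which applies to an arbitrary $f\in\CM_{R,\mS}$) rather than Corollary~\ref{b16Sep23} (which, as stated, only covers irredundant embeddings of maximal size), making the descent step to a collection embedding more transparent.
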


\begin{proof} The equalities  $\min \CM_{R,\mS}=\min \CN_{R,\mS}$ and  $\min \CM_{R,\mS}=\min \CN_{R,\mS}^e$ follow from Corollary \ref{b16Sep23}   and  Theorem \ref{15Sep23}.(1), respectively. 
\end{proof}

Suppose that $R$ is a prime ring. Then automatically $|\min (R)|=1<\infty$. Clearly, 

\begin{equation}\label{RprCN}
\CN_{R, \mS}=\{ R\ra A\, |\, A\in \mathfrak{S}, {}_RA_A\;\; {\rm is\; simple}\}.
\end{equation}
Recall that $ \mathfrak{S}$ is the class of simple Artinian rings. Since  $|\min (R)|=1<\infty$, {\em the $M$-equivalence is the same as the Morita equivalence}.

\begin{corollary}\label{a15Sep23}
Suppose that $R$ is a prime ring  and $C$ be the  Morita equivalence class in $\CN_{R, \mS}$. Then 
\begin{enumerate}

\item For every $f\in C$, there is an element $f'\in C\cap \CN_{R,\mS}^e$ such that $f'\leq f$.

\item $\min (C)\subseteq C\cap \CN_{R,\mS}^e\neq \emptyset$, i.e. the minimal embeddings  in the class $C$ (if exist) are  elementary embeddings that belong to the class $C$.

\item If  $C\cap \CN_{R,\mS}^e$ satisfies the a.c.c. then $\min (C)\neq \emptyset$.

\end{enumerate}
\end{corollary}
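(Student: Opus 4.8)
The plan is to obtain the corollary as an immediate specialization of Theorem \ref{15Sep23}. A prime ring $R$ has exactly one minimal prime, the zero ideal, so $|\min (R)|=1<\infty$ and Theorem \ref{15Sep23} applies. Moreover, as recorded in the discussion preceding the corollary, once $|\min (R)|=1$ the $M$-equivalence on $\CN_{R,\mS}$ is, by the very definition of $M$-equivalence, nothing but the Morita equivalence of the (single) target simple Artinian ring. Hence a Morita equivalence class $C$ in $\CN_{R,\mS}$ is exactly an $M$-equivalence class in the sense used in Theorem \ref{15Sep23}, and statements 1, 2 and 3 of the corollary are, verbatim, statements 1, 2 and 3 of Theorem \ref{15Sep23} for this $C$.

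To be explicit: for statement 1, given $f\in C$, Theorem \ref{15Sep23}.(1) produces an element $f'\in C\cap\CN_{R,\mS}^e$ with $f'\leq f$; for statement 2, Theorem \ref{15Sep23}.(2) gives $\min (C)\subseteq C\cap\CN_{R,\mS}^e\neq\emptyset$; and for statement 3, Theorem \ref{15Sep23}.(3) gives $\min (C)\neq\emptyset$ whenever $C\cap\CN_{R,\mS}^e$ satisfies the a.c.c.

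There is essentially no obstacle here; the only point meriting a line of justification is the identification of a Morita equivalence class with an $M$-equivalence class in the prime case, and this is immediate. Should a self-contained argument be preferred, one may simply rerun the proof of Theorem \ref{15Sep23} in this setting, where it streamlines: any $f\in C$ lies in $\Irr\,\CM_{R,\mS}$ (by (\ref{f=pVf-1})) and, $R$ being prime, has size $s=1=|\min (R)|$, so Corollary \ref{b16Sep23} applies directly and yields a smaller, Morita-equivalent collection monomorphism $\phi_{V_1}\in C$ whose matrix size does not exceed that of $f$; iterating produces a strictly decreasing chain $f\geq\phi_{V_1}>\phi_{V_2}>\cdots$ in $C$ with non-increasing matrix sizes, which must terminate, necessarily at an elementary embedding (otherwise the construction in the proof of Theorem \ref{B9Sep23}.(2) would supply a strictly smaller embedding, forcing a strict drop in the matrix size). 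This gives statements 1 and 2, and statement 3 is then a formal consequence of the a.c.c.
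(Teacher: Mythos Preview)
Your proposal is correct and takes essentially the same approach as the paper, which simply states that the corollary is a particular case of Theorem \ref{15Sep23}. Your additional explanation of why $M$-equivalence coincides with Morita equivalence when $|\min(R)|=1$ and your optional self-contained argument add useful detail, but the core reasoning matches the paper's one-line proof.
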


\begin{proof} The corollary is a particular case of Theorem \ref{15Sep23}.
\end{proof}

\begin{definition}
Let $\mS'$ be a subcategory of $\mS$. We say that  $\mS'$ is {\bf closed under the Morita equivalence} if $A\in \mS'$, $A'\in  \mS$ and $A'\stackrel{M}{\sim}A$ then $A'\in  \mS'$. 
\end{definition}

 The category $\mS_f$  of finite dimensional semisimple algebras (over the fixed ground field) is  closed under the Morita equivalence. 
 For category  $\mS_f$  all the previous results hold. In particular, Corollary 
\ref{b15Sep23} holds.

\begin{corollary}\label{b15Sep23}
Suppose that $R$ is a semiprime ring with $|\min (R)|<\infty$, $\mS_f$ be the category  of finite dimensional semisimple algebras (over the fixed ground field) and $C$ be an $M$-equivalence class in $\CN_{R, \mS_f}$. Then 
\begin{enumerate}

\item For every $f\in C$, there is an element $f'\in C\cap \CN_{R,\mS_f}^e$ such that $f'\leq f$.

\item $\emptyset\neq \min (C)\subseteq C\cap \CN_{R,\mS_f}^e$, i.e. the minimal embeddings  in the class $C$ are  elementary embeddings that belong to the class $C$.


\item $\min (\CM_{R,\mS_f})\neq \emptyset$ and $\min (\CM_{R,\mS_f})\subseteq \CN_{R,\mS_f}^e$.

\end{enumerate}
\end{corollary}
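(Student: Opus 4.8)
The plan is to obtain the corollary from its generic counterparts, Theorem \ref{15Sep23} and Corollary \ref{e15Sep23}, by first noting that all the constructions involved stay inside $\mS_f$, and then adding one new ingredient: finite dimensionality over the ground field makes the relevant posets well‑founded. For the transfer step, recall that $\mS_f$ is closed under Morita equivalence, and observe that it is also closed under the ``collection'' operation used in Theorem \ref{B9Sep23} and Corollary \ref{b16Sep23}: if $A_i=M_{n_i}(D_i)\in\mS_f$ then each $D_i$ is a finite dimensional division algebra over the ground field, so every block $\End(\bM_{ij}{}_{A_i})\simeq M_{l_{ij}}(D_i)$ and every Morita‑equivalent block again lies in $\mS_f$. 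Hence Lemma \ref{b14Sep23}, Theorem \ref{B9Sep23}, Corollary \ref{b16Sep23}, Theorem \ref{15Sep23} and Corollary \ref{e15Sep23} all hold verbatim with $\mS$ replaced by $\mS_f$. This already yields statement (1); the inclusion $\min(C)\subseteq C\cap\CN_{R,\mS_f}^e$ in (2); and, via the $\mS_f$‑form of Corollary \ref{e15Sep23}, the equalities $\min\CM_{R,\mS_f}=\min\CN_{R,\mS_f}=\min\CN_{R,\mS_f}^e$, hence the inclusion $\min\CM_{R,\mS_f}\subseteq\CN_{R,\mS_f}^e$ in (3).

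It remains to prove that $\min(C)$ and $\min\CM_{R,\mS_f}$ are non‑empty, and here I would introduce the function $\delta(f):=\dim_k t(f)\in\N$ on $\Ob(\CM_{R,\mS_f})$. A single generating step of $\le$ is either $f=\alpha g$ with $\alpha\colon t(g)\twoheadrightarrow t(f)$ an epimorphism, or $g=\beta f$ with $\beta\colon t(f)\hookrightarrow t(g)$ a monomorphism; in both cases $\delta(f)\le\delta(g)$, with equality forcing $\alpha$, resp.\ $\beta$, to be an isomorphism, being a surjective (resp.\ injective) $k$‑linear map between $k$‑spaces of equal finite dimension. Therefore $f\le g$ implies $\delta(f)\le\delta(g)$, and $f<g$ implies $\delta(f)<\delta(g)$; so any strictly descending chain in $(\Ob(\CM_{R,\mS_f}),\le)$ has strictly decreasing non‑negative integer $\delta$‑values and must be finite. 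Thus $(\Ob(\CM_{R,\mS_f}),\le)$ is well‑founded, and so is every subposet of it. Equivalently, by Proposition \ref{a16Sep23}.(3) every morphism between objects of $\CN_{R,\mS_f}$ is a ring monomorphism, so $\delta$ cannot strictly decrease infinitely often along a morphism chain $\cdots\to A_2\to A_1\to A_0$ in $C\cap\CN_{R,\mS_f}^e$, which is exactly the a.c.c.\ hypothesis of Theorem \ref{15Sep23}.(3).

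Assembling the pieces: $C$ is non‑empty, being an $M$‑equivalence class, so well‑foundedness (or Theorem \ref{15Sep23}.(3)) gives $\min(C)\ne\emptyset$, which together with the transfer step proves (2). For (3), assuming $\CM_{R,\mS_f}\ne\emptyset$ (equivalently $\CN_{R,\mS_f}^e\ne\emptyset$, by the reductions of Theorem \ref{B9Sep23} and statement (1)), well‑foundedness of $(\Ob(\CM_{R,\mS_f}),\le)$ gives $\min\CM_{R,\mS_f}\ne\emptyset$, and the equality $\min\CM_{R,\mS_f}=\min\CN_{R,\mS_f}^e$ from the transfer step finishes (3). The point that needs care is that $\le$ need not preserve $M$‑equivalence classes, so $\min(C)$ — minimality inside the class $C$ — is not the same as minimality inside $\CM_{R,\mS_f}$; accordingly (3) must be routed through $\min\CM=\min\CN^e$ rather than through (2). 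Beyond that the argument is routine: the only genuinely new input over the $\mS$‑case is the finiteness of $\delta$, and the one thing to check carefully is its (strict) monotonicity along the generators of $\le$ together with the $\min(C)$‑versus‑$\min\CM$ distinction.
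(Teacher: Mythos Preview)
Your proposal is correct and follows essentially the same approach as the paper. The paper's proof is a single sentence --- ``The category $\mS_f$ satisfies the a.c.c., and so the corollary follows from Theorem \ref{15Sep23}'' --- and you have supplied precisely the details that sentence suppresses: the explicit verification of the a.c.c.\ via the dimension function $\delta(f)=\dim_k t(f)$, the check that the constructions of Theorem \ref{B9Sep23} and Corollary \ref{b16Sep23} stay inside $\mS_f$, and the routing of part (3) through the $\mS_f$-analogue of Corollary \ref{e15Sep23}.
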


\begin{proof} The category $\mS_f$ satisfies the  a.c.c., and so the corollary follows from Theorem 
\ref{15Sep23}. 
\end{proof}

\begin{corollary}\label{c15Sep23}
Suppose that $R$ is a prime ring  and $C$ be the  Morita equivalence class in $\CN_{R, \mS_f}$. Then 
\begin{enumerate}

\item For every $f\in C$, there is an element $f'\in C\cap \CN_{R,\mS_f}^e$ such that $f'\leq f$.

\item $\emptyset\neq \min (C)\subseteq C\cap \CN_{R,\mS_f}^e$, i.e. the minimal embeddings  in the class $C$ are  elementary embeddings that belong to the class $C$.


\item $\min (\CM_{R,\mS_f})\neq \emptyset$ and $\min (\CM_{R,\mS_f})\subseteq \CN_{R,\mS_f}^e$.

\end{enumerate}
\end{corollary}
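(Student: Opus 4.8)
The plan is to obtain this as the prime-ring specialization of Corollary \ref{b15Sep23}, in exactly the way Corollary \ref{a15Sep23} is obtained from Theorem \ref{15Sep23}. First I would observe that a prime ring $R$ has a unique minimal prime, namely $\gn_R=0$, so $|\min (R)|=1<\infty$ and the standing hypotheses of Corollary \ref{b15Sep23} are met. Next I would note that, because $\min (R)$ is a singleton, the product decomposition $A=\prod_{\gp\in \min (R)}A(\gp)$ of the target of any object of $\CN_{R,\mS_f}$ collapses to $A=A(\gn_R)$, a single simple Artinian ring (cf.\ \eqref{RprCN}). Consequently the $M$-equivalence relation on $\CN_{R,\mS_f}$, which by definition compares the factors $A(\gp)$ and $A'(\gp)$ for each $\gp\in \min (R)$, coincides with ordinary Morita equivalence of the (single) simple Artinian targets. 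Thus the phrase ``the Morita equivalence class $C$'' in the statement is literally ``an $M$-equivalence class $C$'' in the sense required by Corollary \ref{b15Sep23}.

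With these identifications, parts (1), (2), (3) are precisely the three assertions of Corollary \ref{b15Sep23} applied to $C$. Part (1) is the statement that every $f\in C$ dominates an elementary embedding lying in $C$; part (2) uses that $\mS_f$ is closed under Morita equivalence and satisfies the a.c.c., so that Theorem \ref{15Sep23}.(2)--(3) give $\emptyset\neq \min (C)\subseteq C\cap \CN_{R,\mS_f}^e$; and part (3) combines this over all $M$-equivalence classes with Corollary \ref{e15Sep23} (which identifies $\min \CM_{R,\mS_f}$ with $\min \CN_{R,\mS_f}^e$) to conclude $\min (\CM_{R,\mS_f})\neq \emptyset$ and $\min (\CM_{R,\mS_f})\subseteq \CN_{R,\mS_f}^e$.

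Since there is no genuinely new content here -- the statement is an instance of the already-established Corollary \ref{b15Sep23} -- the only point requiring any care is the bookkeeping remark that primeness forces $|\min (R)|=1$ and that $M$-equivalence degenerates to Morita equivalence in this case; both are immediate from the definitions. I therefore expect the actual proof to be a single line: the corollary is a particular case of Corollary \ref{b15Sep23}.
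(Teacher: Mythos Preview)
Your proposal is correct and matches the paper's approach exactly: the paper's proof is the single line ``The corollary is a particular case of Corollary \ref{b15Sep23}.'' Your observation that primeness gives $|\min(R)|=1$ and collapses $M$-equivalence to Morita equivalence is precisely the bookkeeping needed, and nothing more.
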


\begin{proof}  The corollary is a particular case of Corollary \ref{b15Sep23}.
\end{proof}

{\bf Elementary left $(R,A)$-collections.} Suppose that $R$ is a semiprime ring with $|\min (R)|<\infty$. Let $V$ be a left $(R, A)$-collection where
 $$A=\prod_{\gp \in \min (R)}A(\gp)\;\; {\rm and  }\;\; \phi_V:R\ra \prod_{\gp \in \min (R)}A_V(\gp).$$
  Recall that the semisimple Artinian rings $A(\gp)$ and $A_V(\gp)$ are Morita equivalent (Lemma \ref{b14Sep23}.(2)). Isomorphic  rings are Morita equivalent but not vice versa, in general. 

\begin{definition}
Suppose that $R$ is a semiprime ring with $|\min (R)|<\infty$. The left $(R,A)$-collection $V$ is called  {\bf elementary} if  the embedding $\phi_V\in \CM_{R, \mS}$   is 
 elementary, i.e. for each $\gp \in \min (R)$, 
the $(R/\gp, A_V(\gp))$-bimodule $A_V(\gp)$ is simple.
Let $ \CV_l^e(R)$ be   the class of elementary left $R$-collections. 
\end{definition}


\begin{lemma}\label{d15Sep23}
Suppose that $R$ is a semiprime ring with $|\min (R)|<\infty$. Then the left $(R,A)$-collection $V$  is an  elementary  left $(R,A)$-collection iff  $A_V^c:=\{ A_V(\gp)\}$ is a left $(R,A_V)$-collection; and in this case $\phi_{A_V^c}\simeq \phi_V$.
\end{lemma}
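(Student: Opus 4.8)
Let me plan a proof.

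The plan is to observe that the stated equivalence is obtained by unwinding the definitions, the only substantive point being that the condition ``$\phi_V$ is elementary'' coincides verbatim with the simplicity requirement that makes $A_V^c$ a collection. So first I would dispose of the two ``automatic'' requirements. Writing $\phi_{V,\gp}:=\pr_{A_V(\gp)}\circ\phi_V\colon R\to A_V(\gp)$ for the $\gp$-component of $\phi_V$ (so $\phi_{V,\gp}(r)$ is the endomorphism $v\mapsto rv$ of $V(\gp)_{A(\gp)}$), the left $R$-module structure on $A_V(\gp)$ making it an $(R,A_V(\gp))$-bimodule is $r\cdot a=\phi_{V,\gp}(r)\,a$. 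By (\ref{VRcol-1}) we have $A_V(\gp)\simeq M_{l_\gp}(D(\gp))$, so $l\big(A_V(\gp)_{A_V(\gp)}\big)=l_\gp<\infty$; thus the finite-length requirement holds automatically. Since $1\in A_V(\gp)$, an element $r\in R$ annihilates $A_V(\gp)$ iff $\phi_{V,\gp}(r)=0$ iff $rV(\gp)=0$, i.e. iff $r\in\ann_R(V(\gp))=\gp$; hence $\ann_R(A_V(\gp))=\gp$ automatically (this also recovers the inclusion of Lemma \ref{b14Sep23}.(4)). Therefore $A_V^c=\{A_V(\gp)\}$ is a left $(R,A_V)$-collection if and only if $A_V(\gp)$ is a simple $(R,A_V(\gp))$-bimodule for every $\gp\in\min(R)$.

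Next I would match this with elementariness. Because $\gp=\ann_R(A_V(\gp))$, the left $R$-action on $A_V(\gp)$ factors through $R/\gp$, so the $(R,A_V(\gp))$-subbimodules of $A_V(\gp)$ are exactly its $(R/\gp,A_V(\gp))$-subbimodules; consequently $A_V(\gp)$ is simple as an $(R,A_V(\gp))$-bimodule precisely when it is simple as an $(R/\gp,A_V(\gp))$-bimodule. Taking the conjunction over all $\gp\in\min(R)$, the condition ``$A_V^c$ is a left $(R,A_V)$-collection'' is exactly the defining condition ``$\phi_V$ is elementary'', i.e. ``$V$ is an elementary left $(R,A)$-collection''. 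This proves the ``iff''.

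For the final assertion, assume $V$ is elementary, so $A_V^c\in\CV_l^e(R)$. Using the canonical identification $\End\big(A_V(\gp)_{A_V(\gp)}\big)=A_V(\gp)$ (endomorphisms of the right regular module are the left multiplications, $a\mapsto(x\mapsto ax)$), the target of $\phi_{A_V^c}$ is $\prod_{\gp\in\min(R)}\End\big(A_V(\gp)_{A_V(\gp)}\big)=\prod_{\gp\in\min(R)}A_V(\gp)=A_V$, and under this identification $\phi_{A_V^c}(r)$ acts on $A_V(\gp)$ by left multiplication by $\phi_{V,\gp}(r)$; that is, $\phi_{A_V^c}=\phi_V$. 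This is precisely the observation (\ref{f=pVf}) applied to the elementary embedding $\phi_V$, and in particular $\phi_{A_V^c}\simeq\phi_V$ in $\CM_{R,\mS}$.

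I do not expect a genuine obstacle here: the whole argument is bookkeeping. The two places that call for care are (i) keeping straight which left $R$-action on $A_V(\gp)$ is intended, so that $\ann_R(A_V(\gp))=\gp$ is verified rather than merely quoted, and (ii) the passage from $(R,A_V(\gp))$-subbimodules to $(R/\gp,A_V(\gp))$-subbimodules, which is exactly what makes ``$A_V^c$ a collection'' and ``$\phi_V$ elementary'' the same statement.
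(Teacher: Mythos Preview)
Your proposal is correct and follows exactly the approach the paper intends: the paper's own proof is the single line ``Straightforward, see (\ref{f=pVf})'', and you have simply written out the bookkeeping that line suppresses, verifying the finite-length and annihilator conditions automatically and reducing the remaining simplicity condition to the definition of ``elementary'', then invoking $\End(A_V(\gp)_{A_V(\gp)})=A_V(\gp)$ to get $\phi_{A_V^c}=\phi_V$. There is nothing to add.
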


\begin{proof} Straightforward, see (\ref{f=pVf}).
\end{proof}

{\bf Embeddings of semiprime  rings  into    semiprimary rings.} Recall that  $\mathcal{S}$ is the class of semiprimary rings and $\mS\subseteq \mathcal{S}$. 
 A finite direct product of semiprime  rings is a semiprime  rings. Every semiprimary ring is a unique  finite direct product of semiprimary rings none of which is a direct product of two rings. 

\begin{lemma}\label{a19Sep23}
Let $R\stackrel{f}{\ra} A\in \CM_{R,\mathcal{S}}$. Then $R\cap \rad (A)=0$, $R\stackrel{\of}{\ra} A/\rad (A)\in \CM_{R,\mS}$ and $\of\leq f$.
\end{lemma}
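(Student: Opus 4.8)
The plan is to verify the three claims in turn, exploiting that $\rad(A)$ is a nilpotent ideal of the semiprimary ring $A$. First I would show that $R\cap\rad(A)=0$. Let $I:=f^{-1}(\rad(A))$, an ideal of $R$; since $f$ is a monomorphism it suffices to see that $f(I)=0$, equivalently that $\rad(A)\cap f(R)=0$. Because $\rad(A)$ is nilpotent, say $\rad(A)^n=0$, the ideal $f(I)$ satisfies $f(I)^n\subseteq\rad(A)^n=0$, so $f(I)^n=0$. As $f$ is injective and multiplicative, $I^n=0$ in $R$. But $R$ is semiprime, hence has no nonzero nilpotent ideals, so $I=0$ and therefore $R\cap\rad(A)=f(R)\cap\rad(A)=0$.

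Next I would construct $\of$. Let $\pi:A\ra A/\rad(A)$ be the canonical projection; by definition of a semiprimary ring, $A/\rad(A)$ is a semisimple Artinian ring, so $A/\rad(A)\in\mS$. Set $\of:=\pi f: R\ra A/\rad(A)$. This is a ring homomorphism, and $\ker(\of)=\{r\in R\mid f(r)\in\rad(A)\}=f^{-1}(\rad(A)\cap f(R))=f^{-1}(0)=0$ by the previous paragraph. Hence $\of$ is a monomorphism and $R\stackrel{\of}{\ra}A/\rad(A)\in\CM_{R,\mS}$.

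Finally, $\of\leq f$ is immediate from the definition of the binary relation $\leq$ on $\Ob(\CM_{R,\CA})$: we have $\of=\pi f$ with $\pi\in\CM_{R,\mathcal{S}}(f,\of)$ an epimorphism from $t(f)=A$ onto $t(\of)=A/\rad(A)$, and this is exactly one of the two generating relations for $\leq$. (Strictly speaking one should note that $\pi$ is a morphism in the relevant monomorphism category, i.e. $\of=\pi f$, which holds by construction; the target $A/\rad(A)$ lies in $\mS\subseteq\mathcal{S}$, so the relation makes sense both in $\CM_{R,\mathcal{S}}$ and, after the observation $\of\in\CM_{R,\mS}$, within the subcategory.) Thus $\of\leq f$.

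There is no substantial obstacle here: the only point requiring care is the very first step, and it is precisely the standard fact that a nilpotent ideal pulls back, along an embedding, to a nilpotent ideal of a semiprime ring and hence must be zero. Everything else is a formal unwinding of definitions. One could alternatively phrase the first step by noting that $f(R)\cap\rad(A)$ is an ideal of the subring $f(R)\cong R$ which is nilpotent (being contained in $\rad(A)$), and semiprimeness of $R$ forces it to vanish; this is the same argument.
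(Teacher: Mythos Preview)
Your proof is correct and follows essentially the same approach as the paper: both identify $R$ with its image, observe that $R\cap\rad(A)$ is a nilpotent ideal of the semiprime ring $R$ and hence zero, and then use the canonical epimorphism $\pi:A\ra A/\rad(A)$ to realize $\of=\pi f$ as a monomorphism with $\of\leq f$. Your version is more explicit about the kernel computation and the role of $\pi$ as a morphism in the monomorphism category, but the underlying argument is identical.
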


\begin{proof} The ring $R$ is a semiprime ring which can be  identified  with its image in the ring $A$  via the monomorphism $f$. The radical $\rad (A)$ is a nilpotent ideal. Hence the intersection $R\cap \rad (A)$ is a nilpotent ideal of $R$. Then  $$R\cap \rad (A)=0$$ since the ring $R$ is a semiprime ring, and so 
 $R\stackrel{\of}{\ra} A/\rad (A)\in \CM_{R,\mS}$
  and the epimorphism $\pi :A\ra A/\rad (A)$ belongs to  $\mathcal{S}(f,\of )$. Hence, $\of\leq f$.
\end{proof}

\begin{corollary}\label{b13Sep23}
A semiprime ring with infinitely many minimal primes cannot be embedded into a semiprimary ring. 
\end{corollary}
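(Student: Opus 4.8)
The plan is to reduce the statement immediately to the semisimple Artinian case, which has already been handled in Corollary \ref{a13Sep23}. Suppose, for contradiction, that $R$ is a semiprime ring with $|\min (R)|=\infty$ that embeds into a semiprimary ring, i.e. $\CM_{R,\mathcal{S}}\neq \emptyset$; pick $R\stackrel{f}{\ra}A\in \CM_{R,\mathcal{S}}$. The key observation is that a semiprime ring cannot ``see'' the radical of $A$: since $\rad (A)$ is a nilpotent ideal, $R\cap \rad (A)$ is a nilpotent ideal of $R$, hence zero, so $f$ descends to a monomorphism into $A/\rad (A)$. This is exactly Lemma \ref{a19Sep23}, which gives $R\stackrel{\of}{\ra}A/\rad (A)\in \CM_{R,\mS}$ (here one uses the definition of semiprimary ring: $A/\rad (A)$ is semisimple Artinian). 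Thus $\CM_{R,\mS}\neq \emptyset$, which contradicts Corollary \ref{a13Sep23}. Therefore no such embedding exists.

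There is essentially no obstacle here: the entire content has been pushed into Lemma \ref{a19Sep23} (the radical-killing step) and Corollary \ref{a13Sep23} (the semisimple Artinian case, itself a consequence of Theorem \ref{B9Sep23}.(1)). The only point worth stating explicitly in the write-up is that passing from $A$ to $A/\rad (A)$ keeps us inside a category where the finiteness of $\min (R)$ is forced, and that the composite map remains injective precisely because $R$ is semiprime. So the proof is a two-line citation: apply Lemma \ref{a19Sep23} to obtain an object of $\CM_{R,\mS}$, then invoke Corollary \ref{a13Sep23} to get a contradiction.

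\begin{proof}
Suppose that $R$ is a semiprime ring with $|\min (R)|=\infty$ and that $R$ embeds into a semiprimary ring, i.e. $\CM_{R,\mathcal{S}}\neq \emptyset$. Choose $R\stackrel{f}{\ra}A\in \CM_{R,\mathcal{S}}$. By Lemma \ref{a19Sep23}, $R\stackrel{\of}{\ra}A/\rad (A)\in \CM_{R,\mS}$ since $A/\rad (A)$ is a semisimple Artinian ring. Hence $\CM_{R,\mS}\neq \emptyset$, which contradicts Corollary \ref{a13Sep23}. Therefore $R$ cannot be embedded into a semiprimary ring.
\end{proof}
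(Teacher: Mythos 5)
Your proof is correct and is essentially identical to the paper's own argument: apply Lemma \ref{a19Sep23} to pass from $A$ to $A/\rad(A)$ and then invoke Corollary \ref{a13Sep23}. No gaps or differences.
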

\begin{proof} Suppose that $R\stackrel{f}{\ra} A\in \CM_{R,\mathcal{S}}$. By Lemma \ref{a19Sep23},  $R\stackrel{\of}{\ra} A/\rad (A)\in \CM_{R,\mS}$. This contradicts to Corollary \ref{a13Sep23}. 
\end{proof}


\section{The largest left quotient rings and their  embeddings  into  semisimple Artinian or  semiprimary rings.} \label{LLQRINGS} 

At the beginning of the section, we collect results on localizations and the largest left quotient ring that are used in proofs of the next two sections. 

For a semiprime ring $R$ with finitely many minimal primes, Theorem \ref{A21Sep23} (resp.,  Corollary \ref{aA21Sep23}) shows that the ring $R$ and all its left localizations at regular left  (resp., right ) denominator sets  have the same elementary embeddings. So, in order  to describe all elementary embeddings  for these rings it suffices to do it for any of them. As localizations make the structure of rings `simpler' it suffices to describe the elementary embeddings for the largest left (resp., right) quotient ring of the ring. 

Theorem \ref{B21Sep23} describes the minimal embeddings for all semiprime left Goldie rings and their localizations.  In this section, a proof of Theorem \ref{21Sep23}, which  is a criterion for a semiprime ring $R$ to be embeddable into  semisimple Artinian    rings, is given.\\
 
 {\bf The largest regular left Ore set and the largest left quotient ring of a ring}. Let $R$ be a ring. A {\bf multiplicatively closed subset} $S$ of $R$ or a {\bf
 multiplicative subset} of $R$ (i.e., a multiplicative sub-semigroup of $(R,
\cdot )$ such that $1\in S$ and $0\not\in S$) is said to be a {\bf
left Ore set} if it satisfies the {\bf left Ore condition}: for
each $r\in R$ and $s\in S$, 
  $$Sr\bigcap Rs\neq \emptyset.$$
Let $\Ore_l(R)$ be the set of all left Ore sets of $R$.
  For  $S\in \Ore_l(R)$, $\ass_l (S) :=\{ r\in
R\, | \, sr=0 \;\; {\rm for\;  some}\;\; s\in S\}$  is an ideal of
the ring $R$.

A left Ore set $S$ is called a {\bf left denominator set} of the
ring $R$ if $rs=0$ for some elements $ r\in R$ and $s\in S$ implies
$tr=0$ for some element $t\in S$, i.e., $r\in \ass (S)$. Let
$\Den_l(R)$ be the set of all left denominator sets of $R$. For
$S\in \Den_l(R)$, the ring $$S^{-1}R=\{ s^{-1}r\, | \, s\in S, r\in R\}$$
 is called  the {\bf left localization} of the ring $R$ at $S$ (the {\bf
left quotient ring} of $R$ at $S$). In Ore's method of localization one can localize  precisely at left denominator sets.
 In an obvious way, the {\bf right Ore condition}, {\bf  right Ore} and {\bf right  denominator sets} of $R$ are defined and they are denoted by $\Ore_r(R)$ and $\Den_r(R)$, respectively. For
$S\in \Den_r(R)$, the ring $$RS^{-1}=\{ rs^{-1}\, | \, s\in S, r\in R\}$$
 is called  the {\bf right localization} of the ring $R$ at $S$ (the {\bf
right quotient ring} of $R$ at $S$). For  $S\in \Ore_r(R)$, $\ass_r (S) :=\{ r\in
R\, | \, rs=0 \;\; {\rm for\;  some}\;\; s\in S\}$  is an ideal of
the ring $R$.
 
 A left and right Ore or denominator set is  called  an {\bf Ore set}  or a {\bf denominator set} and these sets are denoted by $\Ore (R)$ and $\Den (R)$, respectively. If $S\in \Den (R)$ then $$S^{-1}R\simeq RS^{-1}.$$ 
For an ideal $\ga$ of $R$, let $\Den_*(R,\ga):=\{ S\in \Den_*(R)\, | \, \ass_* (S)=\ga\}$ where $*\in \{ l,r,\emptyset\}$. 

In general, the set $\CC_R$ of regular elements of a ring $R$ is
neither left nor right Ore set of the ring $R$ and as a
 result neither left nor right classical  quotient ring ($Q_{l,cl}(R):=\CC_R^{-1}R$ and
 $Q_{r,cl}(R):=R\CC_R^{-1}$) exists.
 There  exists the largest (w.r.t. $\subseteq$)
 regular left Ore set $S_l(R)$, \cite{larglquot}. This means that the set $S_l(R)$ is an Ore set of
 the ring $R$ that consists
 of regular elements (i.e., $S_l(R)\subseteq \CC_R$) and contains all the left Ore sets in $R$ that consist of
 regular elements. Also, there exists the largest regular  right (respectively, left and right) Ore set  $S_r(R)$ (respectively, $S_r(R)$, $S_{l,r}(R)$) of the ring $R$.
 In general,  the sets $\CC_R$, $S_l(R)$, $S_r(R)$ and $S_{l,r}(R)$ are distinct, for example,
 when $R= \mI_1= K\langle x, \der , \int\rangle$  is the ring of polynomial integro-differential operators  over a field $K$ of characteristic zero,  \cite{Bav-intdifline}. In  \cite{Bav-intdifline},  these four sets are explicitly described  for $R=\mI_1$.

\begin{definition} (\cite{Bav-intdifline, larglquot}.)    The ring
$$Q_l(R):= S_l(R)^{-1}R$$ (respectively, $Q_r(R):=RS_r(R)^{-1}$ and
$Q(R):= S_{l,r}(R)^{-1}R\simeq RS_{l,r}(R)^{-1}$) is  called
the {\bf largest left} (respectively, {\bf right and two-sided})
{\bf quotient ring} of the ring $R$.
\end{definition}
 In general, the rings $Q_l(R)$, $Q_r(R)$ and $Q(R)$
are not isomorphic, for example, when $R= \mI_1$, \cite{Bav-intdifline}.

Let $R$ be a ring. We say that two left localizations of $R$  are {\bf equal} and write $S^{-1}R = S'^{-1}R$ if the map $ S^{-1}R \ra S'^{-1}R$, $s^{-1}r\mapsto s^{-1}r$, is a well-defined isomorphism. This isomorphism is an $R$-isomorphism.   Then the map $S'^{-1}R\ra S^{-1}R$, $s'^{-1}r\mapsto s'^{-1}r$, is also a ring  $R$-isomorphism. So, two localizations are equal iff there is an $R$-{\em isomorphism} between them. So, the relation of `equality' is an equivalence relation on the set of left localizations of the ring $R$. The set of all the equivalence classes is denoted by $\Loc_l(R)$.
Clearly, $S^{-1}R = S'^{-1}R$ iff $\ass_l (S)=\ass_l (T)$, $ \frac{s}{1}\in (S'^{-1}R)^\times$ for all $s\in S$ and $ \frac{s'}{1}\in (S^{-1}R)^\times$ for all $s'\in S'$.

 The next
theorem gives various properties of the ring $Q_l(R)$. In
particular, it describes its group of units.

\begin{theorem}\label{4Jul10}
(\cite[Theorem 2.8]{larglquot}.)
\begin{enumerate}
\item $ S_l (Q_l(R))= Q_l(R)^\times$ {\em and} $S_l(Q_l(R))\cap R=
S_l(R)$.
 \item $Q_l(R)^\times= \langle S_l(R), S_l(R)^{-1}\rangle$, {\em i.e., the
 group of units of the ring $Q_l(R)$ is generated by the sets
 $S_l(R)$ and} $S_l(R)^{-1}:= \{ s^{-1} \, | \, s\in S_l(R)\}$.
 \item $Q_l(R)^\times = \{ s^{-1}t\, | \, s,t\in S_l(R)\}$.
 \item $Q_l(Q_l(R))=Q_l(R)$.
\end{enumerate}
\end{theorem}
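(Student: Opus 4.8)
The plan is to deduce all four statements from the single identity $Q_l(R)^\times\cap R=S_l(R)$. Write $S:=S_l(R)$ and $Q:=Q_l(R)=S^{-1}R$; since $S\subseteq\CC_R$, the localization map $R\to Q$ is injective, so we regard $R$ as a subring of $Q$ and every $s\in S$ becomes a unit of $Q$. To prove $Q^\times\cap R=S$, the inclusion $\supseteq$ is clear, and for $\subseteq$ I would show that $T:=Q^\times\cap R$ is a regular left Ore set of $R$ and invoke the maximality of $S_l(R)$: it contains $1$, is multiplicatively closed, and consists of regular elements of $R$ (anything regular in $Q$ is regular in the subring $R$); and for the left Ore condition, given $x\in R$ and $r\in T$, expanding $xr^{-1}\in Q$ as $s^{-1}b$ with $s\in S$, $b\in R$ yields $sx=br$, an element of $Tx\cap Rr$. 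Hence $T\subseteq S_l(R)=S$.

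For statement (1), I would first check $Q^\times\subseteq S_l(Q)$: $Q^\times$ is multiplicatively closed, consists of regular elements, and satisfies the left Ore condition in $Q$ trivially, since for $u\in Q^\times$ and $q\in Q$ one has $uq\in Q^\times q$ and $uq=(uqu^{-1})u\in Qu$. For the reverse inclusion I would rerun the previous argument one level up. First, $S_l(Q)\cap R$ is a regular left Ore set of $R$: given $x\in R$ and $r\in S_l(Q)\cap R$, the Ore condition for $S_l(Q)$ in $Q$ produces $\sigma x=qr$ with $\sigma\in S_l(Q)$, $q\in Q$; bringing $\sigma$ and $q$ to a common left denominator $t\in S$ and cancelling it lands the relation back in $R$, and $t\sigma\in S_l(Q)\cap R$ because $t\in S\subseteq Q^\times\subseteq S_l(Q)$. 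By maximality, $S_l(Q)\cap R\subseteq S\subseteq Q^\times$. Finally, an arbitrary $s\in S_l(Q)$ has the form $s=t^{-1}r$ with $t\in S$, $r\in R$; then $r=ts\in S_l(Q)\cap R\subseteq Q^\times$, so $s=t^{-1}r\in Q^\times$. Therefore $S_l(Q)=Q^\times$, and intersecting with $R$ and using the first paragraph gives $S_l(Q)\cap R=S_l(R)$.

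Statements (2)--(4) are then short. For (2) and (3): given $q\in Q^\times$, write $q=s^{-1}r$ with $s\in S$, $r\in R$; then $r=sq\in Q^\times\cap R=S$ by the first paragraph, so $q$ is simultaneously of the form $s^{-1}t$ with $s,t\in S$ and an element of the subgroup $\langle S,S^{-1}\rangle$, and the reverse inclusions are immediate. For (4): $Q_l(Q)=S_l(Q)^{-1}Q=(Q^\times)^{-1}Q=Q$, since localizing a ring at a set of its units returns the ring.

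I expect the main obstacle to be the reverse inclusion $S_l(Q)\subseteq Q^\times$ in the second paragraph: one must show that passing to the left quotient ring cannot manufacture a strictly larger regular left Ore set, and the only available leverage is the maximality of $S_l(R)$, which has to be transported back through the localization by the common-left-denominator manoeuvre.
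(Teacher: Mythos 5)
The paper does not prove Theorem~\ref{4Jul10}; it is quoted verbatim from \cite[Theorem~2.8]{larglquot}, so there is no in-paper proof to compare against. Judged on its own merits, your argument is correct and complete, and the structure (everything hinging on the identity $Q_l(R)^\times\cap R=S_l(R)$, obtained by showing $Q_l(R)^\times\cap R$ is a regular left Ore set and invoking maximality of $S_l(R)$) is a clean way to organize the proof.

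A few points worth noting to confirm there is no gap. In your first paragraph, the Ore verification for $T=Q^\times\cap R$ correctly produces $sx=br$ with $s\in S\subseteq T$, which is all the left Ore condition asks for; and a regular multiplicative left Ore set is automatically a left denominator set (since $rs=0$ with $s$ regular forces $r=0$), so maximality of $S_l(R)$ does apply. In the second paragraph, the common-left-denominator step needs $t\in S\subseteq Q^\times\subseteq S_l(Q)$, and you establish $Q^\times\subseteq S_l(Q)$ \emph{before} using it, so there is no circularity; the conclusion $t\sigma\in S_l(Q)\cap R$ then follows from multiplicative closure of $S_l(Q)$. The final `lift' $s=t^{-1}r$ with $r=ts\in S_l(Q)\cap R\subseteq Q^\times$ is exactly right, and statements (2)--(4) do fall out immediately as you say, with (4) using that localizing at a set of units is the identity. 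The one thing you might have stated explicitly is that $R\hookrightarrow Q_l(R)$ because $S_l(R)\subseteq\CC_R$ gives $\ass_l(S_l(R))=0$, which you implicitly use when treating $R$ as a subring of $Q$; you do say it in passing, so this is cosmetic rather than a gap.
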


 A ring $R$ has {\bf
finite left rank} (i.e. {\bf finite left uniform dimension}) if
there are no infinite direct sums of nonzero left ideals in $R$. 


A ring $R$ is called a {\bf left (right) Goldie ring} if $R$ has finite left (right) uniform dimension and $R$ satisfies the ascending chain condition (the a.c.c.) on left (right) annihilators.

The next theorem is a semisimplicity criterion for the ring
$Q_l(R)$  (statements 2-5 are  Goldie's
Theorem).

\begin{theorem}\label{5Jul10}
(\cite[Theorem 2.9]{larglquot}.) The following properties of a ring $R$ are equivalent:
\begin{enumerate}
\item  $Q_l(R)$ is a semisimple ring. \item $Q_{l, cl}(R)$  exists
and is a semisimple ring. \item $R$ is a left order in a
semisimple ring. \item $R$ has finite left rank, satisfies the
ascending chain condition on left annihilators and is a semi-prime
ring. \item A left ideal of $R$ is essential iff it contains a
regular element.
\end{enumerate}
If one of the equivalent conditions holds then $S_0(R) = \CC_R$ and
$Q_l(R) = Q_{l,cl}(R)$.
\end{theorem}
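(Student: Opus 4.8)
The plan is to treat the equivalence of statements 2--5 as Goldie's Theorem (which I would take as known), and to concentrate the work on the equivalence $(1)\Leftrightarrow(2)$ together with the two final identities. So the scheme is: prove $(2)\Rightarrow(1)$ and $(1)\Rightarrow(2)$, and then statements 3, 4, 5 follow from statement 2 by Goldie's Theorem. Besides Goldie's Theorem the only external inputs are the exactness of the Ore localization functor $S_l(R)^{-1}(-)$ and the maximality of $S_l(R)$ in the poset $(\Den_l(R,0),\subseteq)$ quoted in the introduction.

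For $(2)\Rightarrow(1)$ together with the last two assertions I would argue as follows. If $Q_{l,cl}(R)=\CC_R^{-1}R$ exists and is semisimple, then $\CC_R$ is a left denominator set and, its elements being regular, $\ass_l(\CC_R)=0$; hence $\CC_R\in\Den_l(R,0)$, so $\CC_R\subseteq S_l(R)$ by maximality of $S_l(R)$. The reverse inclusion $S_l(R)\subseteq\CC_R$ is automatic, since $\ass_l(S_l(R))=0$ forces every element of $S_l(R)$ to be regular. Therefore $S_0(R)=S_l(R)=\CC_R$, and consequently $Q_l(R)=S_l(R)^{-1}R=\CC_R^{-1}R=Q_{l,cl}(R)$ is semisimple, which is statement 1 and at the same time gives the final two identities of the theorem.

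For $(1)\Rightarrow(2)$, suppose $Q:=Q_l(R)=S_l(R)^{-1}R$ is semisimple. Since $S_l(R)$ consists of regular elements, $\ass_l(S_l(R))=0$, so the canonical map $\sigma\colon R\to Q$ is injective and I identify $R$ with its image. The key step is to show that every $c\in\CC_R$ becomes a unit of $Q$: as $c$ is right regular, right multiplication $x\mapsto xc$ is an injective endomorphism of the left $R$-module ${}_RR$, and applying the exact functor $S_l(R)^{-1}(-)$ shows that right multiplication by $\sigma(c)$ on $Q$ is injective; a right regular element of a semisimple Artinian ring is a unit (right multiplication by it is an injective endomorphism of the finite-length module ${}_QQ$, hence bijective, and choosing $d$ with $d\sigma(c)=1$ the identity $(\sigma(c)d-1)\sigma(c)=0$ then forces $\sigma(c)d=1$ as well). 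Once $\CC_R\subseteq Q^\times$ is established, I would check that $\CC_R$ is a left Ore set directly: given $r\in R$ and $c\in\CC_R$, write the element $\sigma(r)\sigma(c)^{-1}$ of $Q=S_l(R)^{-1}R$ as $\sigma(s)^{-1}\sigma(a)$ with $s\in S_l(R)\subseteq\CC_R$ and $a\in R$; then $\sigma(sr)=\sigma(s)\sigma(r)=\sigma(a)\sigma(c)=\sigma(ac)$, and injectivity of $\sigma$ yields $sr=ac$, which is the left Ore condition with denominator $s\in\CC_R$. The left denominator condition for $\CC_R$ is free ($rc=0$ with $c$ right regular forces $r=0$), so $Q_{l,cl}(R)=\CC_R^{-1}R$ exists; since $\CC_R\in\Den_l(R,0)$, the argument of the previous paragraph gives $\CC_R=S_l(R)$, hence $Q_{l,cl}(R)=\CC_R^{-1}R=S_l(R)^{-1}R=Q$ is semisimple, i.e. statement 2 holds. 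Statements 3, 4 and 5 are then equivalent to statement 2 by Goldie's Theorem. (Alternatively one could run $(1)\Rightarrow(4)$ by verifying that finite left rank, the a.c.c. on left annihilators, and semiprimeness all descend from $Q$ to $R$ along $R\hookrightarrow Q=S_l(R)^{-1}R$.)

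I expect the genuine difficulty to lie entirely in Goldie's Theorem itself, i.e. the equivalence of statements 2--5: its proof needs the nontrivial facts that in a semiprime ring of finite left uniform dimension satisfying the a.c.c. on left annihilators every essential left ideal contains a regular element and that $\CC_R^{-1}R$ is a semisimple Artinian ring. By contrast the part involving statement 1 and the identities $S_0(R)=\CC_R$, $Q_l(R)=Q_{l,cl}(R)$ is short; the only delicate point there is that a regular element of $R$ must become \emph{invertible}, not merely one-sidedly regular, in $Q_l(R)$ --- and this is exactly where the semisimple Artinian structure of $Q_l(R)$ enters.
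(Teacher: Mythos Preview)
The paper does not prove this theorem; it is quoted verbatim from \cite[Theorem~2.9]{larglquot}, with the parenthetical remark that statements 2--5 are Goldie's Theorem. So there is no proof in the present paper to compare against.

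That said, your argument is correct and is exactly the expected one: the equivalence $(2)\Leftrightarrow(3)\Leftrightarrow(4)\Leftrightarrow(5)$ is Goldie's Theorem, and the remaining link $(1)\Leftrightarrow(2)$ together with $S_l(R)=\CC_R$ and $Q_l(R)=Q_{l,cl}(R)$ follows from the maximality of $S_l(R)$ in $\Den_l(R,0)$ once one knows that every $c\in\CC_R$ becomes a unit in the semisimple Artinian ring $Q_l(R)$. Your verification of this last point (right multiplication by $c$ is an injective left $R$-module endomorphism, hence after exact localization an injective endomorphism of the finite-length module ${}_QQ$, hence bijective) is clean, and the derivation of the left Ore condition for $\CC_R$ by writing $\sigma(r)\sigma(c)^{-1}=\sigma(s)^{-1}\sigma(a)$ with $s\in S_l(R)\subseteq\CC_R$ is the standard trick. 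One small remark: you need not invoke exactness of localization to see that $\sigma(c)$ is right regular in $Q$; since $S_l(R)\subseteq\CC_R$, the map $\sigma$ is injective and any equation $(s^{-1}r)\sigma(c)=0$ in $Q$ gives $rc=0$ in $R$, hence $r=0$. But your route is equally valid.
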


{\bf Description of the set of minimal primes of a localization of a semiprime ring at a regular denominator set.} 
 If $T$ is an ideal of $R$ then the sets $\lann_R(T)$ and $\rann_R(T)$ are also ideals. Ideals of that kind are called  {\bf annihilator ideals}.  A submodule of a module is called an {\bf essential submodule} if it meets all the nonzero submodules of the module. 
In  a semiprime ring   the left annihilator of an ideal  is equal to  its  right annihilator and vice versa. 

Theorem \ref{A10Sep23}.(2)  provides  an explicit description of  the set  of minimal primes of localizations of  semiprime  rings provided that they  have only finitely many minimal primes. Theorem \ref{A10Sep23}.(2) is one of the  key facts in  understanding  of  the structure of  embeddings of localizations of semiprime  rings  into  semisimple or  semiprimary rings.

\begin{theorem}\label{A10Sep23}
(\cite{MinPrimeLocRings})  Let $R$ be a semiprime ring and $S\in \Den_l(R, 0)$. Then 
\begin{enumerate}

\item The ring $S^{-1}R$ is a semiprime ring.

\item If, in  addition,  $|\min (R)|<\infty$ then $\min (S^{-1}R)=\{ S^{-1}\gp\, | \, \gp \in \min (R)\}$, i.e. the map $\min (R)\ra \min (S^{-1}R)$, $\gp \mapsto S^{-1}\gp$ is a bijection and $|\min (S^{-1}R)|=|\min (R)|$.

\item If, in  addition,  $|\min (R)|<\infty$ then
$\pi_\gp (S)\in \Den_l(R/\gp , 0)$ and  $\pi_\gp (S)^{-1}(R/\gp)\simeq S^{-1}R/S^{-1}\gp \simeq S^{-1}(R/\gp)$ where $\pi_\gp : R\ra R/\gp$, $r\mapsto r+\gp$.
\end{enumerate}
\end{theorem}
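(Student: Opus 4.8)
The plan is to dispatch (1) directly, and to reduce (2) and (3) to a single structural observation about a minimal prime and its \emph{complementary ideal}; everything else is routine bookkeeping with left denominator sets.

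\textbf{Part (1).} Since $\ass_l(S)=0$, the localization map $R\to S^{-1}R$ is injective, so identify $R$ with its image. Every two-sided ideal $\gb$ of $S^{-1}R$ satisfies $\gb=S^{-1}(\gb\cap R)$ with $\gb\cap R$ a two-sided ideal of $R$: that $\gb\cap R$ is two-sided is immediate, and $\gb\subseteq S^{-1}(\gb\cap R)$ because any $q=s^{-1}r\in\gb$ has $r=sq\in\gb\cap R$. Hence if $\gb^k=0$ then $(\gb\cap R)^k\subseteq\gb^k\cap R=0$, so $\gb\cap R=0$ because $R$ is semiprime, so $\gb=0$; thus $S^{-1}R$ is semiprime.

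\textbf{The complementary ideal trick.} Assume now $|\min(R)|<\infty$, fix $\gp\in\min(R)$, and set $c:=\bigcap_{\gq\in\min(R)\setminus\{\gp\}}\gq$. By Lemma~\ref{b10Sep23} the set $\min(R)$ is irredundant, so $c\neq 0$; moreover $\gp c\subseteq\gp\cap c=\gn_R=0$ and, symmetrically, $c\gp=0$, while $c\not\subseteq\gp$ (otherwise $c=c\cap\gp=0$). Using these together with the two denominator axioms for $S$ (namely $\ass_l(S)=0$ and left reversibility), I would establish two facts. (i) $S\cap\gp=\emptyset$: if $s\in S\cap\gp$ then $sc\subseteq\gp c=0$, whence $c\subseteq\ass_l(S)=0$, a contradiction. (ii) The image $\pi_\gp(S)$ consists of \emph{regular} elements of $R/\gp$, i.e. for $s\in S$ each of $sr\in\gp$ and $rs\in\gp$ forces $r\in\gp$. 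For $sr\in\gp$: then $s(rc)\subseteq(sr)c\subseteq\gp c=0$, so $rc\subseteq\ass_l(S)=0$, so $rRc\subseteq rc=0$ (as $c$ is an ideal), so $(RrR)c=0\subseteq\gp$; primeness of $\gp$ and $c\not\subseteq\gp$ give $RrR\subseteq\gp$, i.e. $r\in\gp$. For $rs\in\gp$: then $(cr)s\subseteq c(rs)\subseteq c\gp=0$, so by left reversibility of $S$ every element of $cr$ lies in $\ass_l(S)=0$, i.e. $cr=0$; then $cRr\subseteq cr=0$, so $c(RrR)=0\subseteq\gp$, and again $RrR\subseteq\gp$, so $r\in\gp$.

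\textbf{Parts (2) and (3).} From (i), $S^{-1}\gp\subsetneq S^{-1}R$ (else $s=p\in S\cap\gp$), and from (ii), $S^{-1}\gp\cap R=\gp$. Standard properties of the localization functor at a left denominator set then show that $S^{-1}\gp$ is prime: if a product of two-sided ideals of $S^{-1}R$ lies in $S^{-1}\gp$, contracting to $R$ produces ideals $\ga_1\ga_2\subseteq\gp$, so some $\ga_i\subseteq\gp$, so the corresponding factor lies in $S^{-1}\gp$. Conversely, if $\gQ\in\min(S^{-1}R)$ then $\gQ\cap R$ is a prime of $R$, hence contains some $\gp\in\min(R)$, whence $S^{-1}\gp\subseteq\gQ$ and, by minimality of $\gQ$, $\gQ=S^{-1}\gp$; and each $S^{-1}\gp$ is itself minimal, since $S^{-1}\gq\subseteq S^{-1}\gp$ forces $\gq\subseteq\gp$, hence $\gq=\gp$. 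Therefore $\min(S^{-1}R)=\{S^{-1}\gp\mid\gp\in\min(R)\}$, and since $S^{-1}\gp\cap R=\gp$ recovers $\gp$, the map $\gp\mapsto S^{-1}\gp$ is a bijection; this is (2). For (3): the left Ore condition for $\pi_\gp(S)$ in $R/\gp$ follows by applying $\pi_\gp$ to that of $S$ in $R$, and by (ii) the set $\pi_\gp(S)$ consists of regular elements, so $\ass_l(\pi_\gp(S))=0$ and left reversibility is vacuous; thus $\pi_\gp(S)\in\Den_l(R/\gp,0)$. Finally, the composite $R\to S^{-1}R\to S^{-1}R/S^{-1}\gp$ annihilates $\gp$ and sends $S$ to units, so by the universal property of left localization it factors through a surjection $\pi_\gp(S)^{-1}(R/\gp)\to S^{-1}R/S^{-1}\gp$, which is injective because $\bar s^{-1}\bar r\mapsto 0$ forces $r\in S^{-1}\gp\cap R=\gp$, i.e. $\bar r=0$; hence $\pi_\gp(S)^{-1}(R/\gp)\simeq S^{-1}R/S^{-1}\gp\simeq S^{-1}(R/\gp)$.

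\textbf{Main obstacle.} The crux is fact (ii), and specifically the case $rs\in\gp$: proving that $\pi_\gp(S)$ consists of regular (not merely left-regular) elements of $R/\gp$ is the one place where the left \emph{reversibility} of $S$ is used beyond the mere left Ore condition, and where $|\min(R)|<\infty$ genuinely enters, through the existence and the annihilation properties of the complementary ideal $c$. Once (i) and (ii) are established, parts (2) and (3) reduce to routine localization arguments together with Lemma~\ref{b10Sep23}.
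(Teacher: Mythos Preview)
The paper does not prove Theorem~\ref{A10Sep23}; it merely quotes it from \cite{MinPrimeLocRings}. So there is no ``paper's own proof'' to compare against. That said, your argument is essentially a correct self-contained proof, and the complementary-ideal trick (facts (i) and (ii)) is exactly the right engine: it is what makes $S^{-1}\gp$ a two-sided ideal with $S^{-1}\gp\cap R=\gp$, and you are right that the case $rs\in\gp$ is the only place where left reversibility (as opposed to the bare left Ore condition) is genuinely needed.

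There is one step you should tighten. In the converse direction of (2) you write ``if $\gQ\in\min(S^{-1}R)$ then $\gQ\cap R$ is a prime of $R$.'' For a general ring monomorphism the contraction of a prime need not be prime in the noncommutative setting, and even for left Ore localizations this statement, while true, is not as immediate as you suggest (the obstruction is that $aRb\subseteq\gQ$ does not obviously force $a(S^{-1}R)b\subseteq\gQ$, because when you rewrite $as^{-1}=s'^{-1}a'$ via left Ore you lose control of $a'$). You can bypass this entirely: you have already shown that the finitely many ideals $S^{-1}\gp$ are prime with $\bigcap_{\gp}S^{-1}\gp=S^{-1}\bigl(\bigcap_\gp\gp\bigr)=0$, so for any $\gQ\in\Spec(S^{-1}R)$ the inclusion $\prod_\gp S^{-1}\gp\subseteq 0\subseteq\gQ$ forces $S^{-1}\gp\subseteq\gQ$ for some $\gp$; if $\gQ$ is minimal this gives $\gQ=S^{-1}\gp$. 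This is cleaner and uses only what you have already established. The rest of your proof (parts (1), (3), and the bijection in (2)) is fine as written.
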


For a semiprime ring $R$ with $|\min (R)|<\infty$, Corollary \ref{c10Sep23} describes the set $\min (Q_l(R))$. 

\begin{corollary}\label{c10Sep23}
Let $R$ be a semiprime ring. Then 
\begin{enumerate}

\item The ring $Q_l(R)$ is a semiprime ring.

\item If, in  addition,  $|\min (R)|<\infty$.  Then 
\begin{enumerate}

\item $\min (Q_l(R))=\{ S_l(R)^{-1}\gp\, | \, \gp \in \min (R)\}$.

\item For all $\gp \in \min (R)$, $\pi_\gp (S_l(R))\in \Den_l(R/\gp , 0)$ and  $$\pi_\gp (S_l(R))^{-1}(R/\gp)\simeq S_l(R)^{-1}R/S_l(R)^{-1}\gp\simeq Q_l(R)/S_l(R)^{-1}\gp \simeq S_l(R)^{-1}(R/\gp)$$ where $\pi_\gp : R\ra R/\gp$, $r\mapsto r+\gp$.

\item For all $\gp \in \min (R)$, $\pi_\gp (S_l(R))\subseteq S_l(R/\gp)$ and $Q_l(R)/S_l(R)^{-1}\gp \subseteq Q_l(R/\gp)$.

\end{enumerate}
\end{enumerate}
\end{corollary}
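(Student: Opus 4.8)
The plan is to obtain the whole statement by specializing Theorem \ref{A10Sep23} to the left denominator set $S=S_l(R)$. The only preliminary point is that $S_l(R)\in \Den_l(R,0)$: by construction $S_l(R)$ is a left Ore set consisting of regular elements, so $\ass_l(S_l(R))=0$, and a regular left Ore set is automatically a left denominator set (if $rs=0$ with $s\in S_l(R)$, then $r=0$ already). Granting this, statement 1 is Theorem \ref{A10Sep23}.(1); statement 2(a) is Theorem \ref{A10Sep23}.(2) once we recall $Q_l(R)=S_l(R)^{-1}R$ by definition; and in 2(b) the first and third isomorphisms are Theorem \ref{A10Sep23}.(3), while the middle identification $S_l(R)^{-1}R/S_l(R)^{-1}\gp=Q_l(R)/S_l(R)^{-1}\gp$ is the same tautology. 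So parts 1 and 2(a), (b) are pure bookkeeping.

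The real work is in 2(c). Put $T:=\pi_\gp(S_l(R))$. By 2(b), $T\in \Den_l(R/\gp,0)$, hence $T$ is a left Ore set of $R/\gp$; to conclude $T\subseteq S_l(R/\gp)$ (the \emph{largest} regular left Ore set of $R/\gp$) it suffices to prove that each element of $T$ is \emph{regular} in $R/\gp$, i.e. that a regular element $s\in\CC_R$ remains regular modulo the minimal prime $\gp$. For this I would use the annihilator ideal $\gp^\circ:=\bigcap_{\gq\in\min(R)\backslash\{\gp\}}\gq$. Since $R$ is semiprime, $\gp\gp^\circ=\gp^\circ\gp\subseteq\gp\cap\gp^\circ=\gn_R=0$; conversely, noting that $\gp^\circ$ contains the product of the finitely many minimal primes other than $\gp$, a standard primeness argument (distinct minimal primes are incomparable) gives $\lann_R(\gp^\circ)=\rann_R(\gp^\circ)=\gp$. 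Now if $rs\in\gp$ then $(\gp^\circ r)s=\gp^\circ(rs)\subseteq\gp^\circ\gp=0$, so $\gp^\circ r=0$ since $s$ is right regular, whence $r\in\rann_R(\gp^\circ)=\gp$; the symmetric computation with $\gp^\circ$ on the right handles the case $sr\in\gp$. Thus $T\subseteq\CC_{R/\gp}$ and, being a left Ore set, $T\subseteq S_l(R/\gp)$. Finally, $Q_l(R)/S_l(R)^{-1}\gp\simeq T^{-1}(R/\gp)\subseteq S_l(R/\gp)^{-1}(R/\gp)=Q_l(R/\gp)$ by the universal property of localization, since $R/\gp\hookrightarrow Q_l(R/\gp)$ and the elements of $T$ are invertible there.

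I expect the only genuine obstacle to be this ``regularity descends to minimal prime quotients'' lemma; the rest is a direct translation of Theorem \ref{A10Sep23}. It is also worth a brief check of the degenerate case $|\min(R)|=1$: there $R$ is already prime, $\gp=0$, $R/\gp=R$ and $\gp^\circ=R$, so 2(c) is vacuous.
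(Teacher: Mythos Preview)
Your argument is correct, and for parts 1, 2(a), 2(b) it is literally the paper's proof: specialize Theorem \ref{A10Sep23} to $S=S_l(R)\in\Den_l(R,0)$.

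For 2(c) you take a different, longer route than the paper. The paper observes that, by its own conventions, $S_l(R/\gp)$ is \emph{defined} as the largest element of the poset $(\Den_l(R/\gp,0),\subseteq)$; since 2(b) already gives $\pi_\gp(S_l(R))\in\Den_l(R/\gp,0)$, the inclusion $\pi_\gp(S_l(R))\subseteq S_l(R/\gp)$ is immediate by maximality, and the ring inclusion follows. Your detour through the complement $\gp^\circ$ to prove that regular elements stay regular modulo a minimal prime is correct (and is a useful fact in its own right), but it re-derives what 2(b) already encodes: membership in $\Den_l(R/\gp,0)$ forces every element of $T$ to be regular in $R/\gp$, because $\ass_l(T)=0$ kills left zero-divisors and the left denominator condition (together with $\ass_l(T)=0$) kills right zero-divisors. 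So your ``only genuine obstacle'' dissolves once you unpack the definition of $S_l$; what you gain is a self-contained, elementary proof of the regularity-descent lemma that does not invoke the denominator formalism.
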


\begin{proof} 1 and 2. By the definition, $Q_l(R)=S_l(R)^{-1}R$ and $S_l(R)\in \Den_l(R, 0)$, and statement 1 and 2 follow from Theorem  \ref{A10Sep23}.(1,2).

3. Statement 3 is a particular case of Theorem \ref{A10Sep23}.(3). 

4. By the definition, $S_l(R/\gp)$ is the largest left denominator set of  the ring $R/\gp$ that consists of regular elements of  $R/\gp$. By statement 3, $$\pi_\gp (S_l(R))\in \Den_l(R/\gp , 0),$$  and so  $\pi_\gp (S_l(R))\subseteq S_l(R/\gp)$ and $Q_l(R)/S_l(R)^{-1}\gp \subseteq Q_l(R/\gp)$.
\end{proof}

{\bf Elementary embeddings of localizations of a semiprime rings.}  Let $R$ be a  ring and  $S\in \Den_l(R)$. For a left  $R$-module $M$, the {\bf set of $S$-torsion elements} of $M$, $\tor_{S}(M):=\{ m\in M\, | \, sm=0\;\; {\rm for \; some}\;\; s\in S\}$, is a submodule of $M$.

 Let $R$ be a semiprime ring with $|\min (R)|<\infty$ and $S\in \Den_l(R, 0)$. 
 By Theorem \ref{A10Sep23}, the ring $S^{-1}R$ is a semiprime ring such that  $\pi_\gp (S)\in \Den_l(R/\gp , 0)$,  $$\min (S^{-1}R)=\{ S^{-1}\gp \, | \, \gp \in \min (R)\}\;\; {\rm and}\;\; S^{-1}R/S^{-1}\gp\simeq \pi_\gp (S)^{-1}(R/\gp). $$
  In particular,  $|\min (S^{-1}R)|=|\min (R)|<\infty$.

Theorem \ref{A21Sep23} (resp.,  Corollary \ref{aA21Sep23}) shows that the ring $R$ and all its left localizations at regular left  (resp., right ) denominator sets  have the same elementary embeddings. So, in order  to describe all elementary embeddings  for these rings it suffices to do it for any of them. As localizations make the structure of rings `simpler' it suffices to describe the elementary embeddings for the largest left (resp., right) quotient ring of the ring.

\begin{theorem}\label{A21Sep23}
Let $R$ be a semiprime ring with $|\min (R)|<\infty$, $R\stackrel{f}{\ra} A= \prod_{\gp \in \min (R)}A(\gp)\in \CN_{R,\mS}^e$ and $S\in \Den_l(R, 0)$. Then 
\begin{enumerate}

\item $f(S)\subseteq A^\times$. 

\item $S^{-1}R\stackrel{S^{-1}f}{\ra} A\in \CN_{S^{-1}R,\mS}^e$ and
$\CN_{S^{-1}R, \mS}^e=\{ S^{-1}R\stackrel{S^{-1}f'}{\ra} A'\, | \, R\stackrel{f'}{\ra} A'\in \CN_{R,\mS}^e\}$.

\item $Q_l(R)\stackrel{S_l(R)^{-1}f}{\ra} A\in \CN_{Q_l(R),\mS}^e$ and
$\CN_{Q_l(R), \mS}^e=\{ Q_l(R)\stackrel{ S_l(R)^{-1}f'}{\ra} A'\, | \, R\stackrel{f'}{\ra} A'\in \CN_{R,\mS}^e\}$.
\end{enumerate}
\end{theorem}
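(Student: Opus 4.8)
The plan is to reduce all three parts to componentwise assertions about the simple Artinian factors $A(\gp)$, exploiting that for an elementary embedding each ${}_RA(\gp)_{A(\gp)}$ is a \emph{simple} bimodule whose underlying right $A(\gp)$-module has finite length.

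\emph{Part 1.} Write $f_\gp:=\pr_{A(\gp)}\circ f$; since $f$ is natural, $\ker f_\gp=\gp$ (Proposition \ref{a16Sep23}.(2)(d)). Fix $s\in S$ and view $A(\gp)$ as a left $R$-module via $f_\gp$. The $S$-torsion submodule $\tor_S({}_RA(\gp))$ (a left $R$-submodule, as recalled above) is stable under right multiplication by $A(\gp)$, since $f_\gp(s')(va)=(f_\gp(s')v)a$; hence it is an $(R,A(\gp))$-subbimodule of the simple bimodule ${}_RA(\gp)_{A(\gp)}$, so it equals $0$ or $A(\gp)$. It cannot be $A(\gp)$: otherwise $1$ would be $S$-torsion, i.e. $f_\gp(s')=f_\gp(s')\cdot 1=0$ for some $s'\in S$, so $s'\in\gp$; but $\gp\cdot\gp^c=0$ with $\gp^c:=\bigcap_{\gq\in\min(R)\setminus\{\gp\}}\gq\neq 0$ (Lemma \ref{b10Sep23}), contradicting $s'\in S\subseteq\CC_R$. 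So $\tor_S({}_RA(\gp))=0$, which says exactly that left multiplication by $f_\gp(s)$ is injective on $A(\gp)$; a non-left-zero-divisor of the Artinian ring $A(\gp)$ is a unit, so $f_\gp(s)\in A(\gp)^\times$ for each $\gp$, whence $f(s)\in A^\times$.

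\emph{Parts 2 and 3.} By Part 1 and the universal property of Ore localization there is a unique ring homomorphism $S^{-1}f:S^{-1}R\ra A$, $s^{-1}r\mapsto f(s)^{-1}f(r)$, which is injective because $f$ is; by Theorem \ref{A10Sep23}, $S^{-1}R$ is semiprime with $\min(S^{-1}R)=\{S^{-1}\gp\mid\gp\in\min(R)\}$, so $A=\prod_{\gp\in\min(R)}A(\gp)$ is naturally indexed by $\min(S^{-1}R)$. The technical heart is the transfer lemma: an additive subgroup $N\subseteq A(\gp)$ is an $(R,A(\gp))$-subbimodule iff it is an $(S^{-1}R,A(\gp))$-subbimodule; the nontrivial direction holds because each $f_\gp(s)$ ($s\in S$) is a unit of $A(\gp)$ (Part 1), so $f_\gp(s)|_N$ is an injective, hence bijective, endomorphism of the finite-length module $N$, giving $f_\gp(s)^{-1}N=N$ and thus $S^{-1}R$-stability. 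Consequently ${}_{S^{-1}R}A(\gp)_{A(\gp)}$ is simple; moreover $\ann_{S^{-1}R}(A(\gp))\cap R=\ker f_\gp=\gp$ and $S^{-1}\gp$ annihilates $A(\gp)$, so $\ann_{S^{-1}R}(A(\gp))=S^{-1}\gp$ by the standard correspondence expressing an ideal of $S^{-1}R$ as the extension of its contraction; since also $l(A(\gp)_{A(\gp)})<\infty$, the set $\{A(\gp)\}$ is a left $S^{-1}R$-collection with $\phi_{\{A(\gp)\}}=S^{-1}f$ (see (\ref{f=pVf})), so $S^{-1}f\in\CM_{S^{-1}R,\mS}^c$ and is elementary, i.e. $S^{-1}f\in\CN_{S^{-1}R,\mS}^e$. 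For the reverse inclusion, given $g:S^{-1}R\ra A'\in\CN_{S^{-1}R,\mS}^e$, set $f':=g\circ\s_R$ where $\s_R:R\ra S^{-1}R$ is the localization map; then $f'$ is a monomorphism (both $\s_R$ and $g$ are), $f'(S)=g(\s_R(S))\subseteq A'^\times$ since $\s_R(S)\subseteq(S^{-1}R)^\times$, and by uniqueness of the extension $S^{-1}f'=g$. Re-indexing $A'$ along $\gp\mapsto S^{-1}\gp$ and rerunning the transfer lemma (now $\s_R(S)$ acts invertibly on each $A'(\gp)$) shows ${}_RA'(\gp)_{A'(\gp)}$ is simple, while $\ann_R(A'(\gp))=\s_R^{-1}(S^{-1}\gp)=\gp$ because $\gp\cap S=\emptyset$ (a minimal prime contains no regular element, Lemma \ref{b10Sep23}); hence $f'\in\CN_{R,\mS}^e$ and $g=S^{-1}f'$, proving the displayed equality. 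Part 3 is then the special case $S=S_l(R)$, which lies in $\Den_l(R,0)$, with $Q_l(R)=S_l(R)^{-1}R$.

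The step I expect to be the main obstacle is the transfer lemma together with the attendant bookkeeping: keeping the identification $\min(S^{-1}R)\cong\min(R)$ straight while transporting the data $A(\gp)$, and confirming that inverting $S$ does not change the $(R,A(\gp))$-subbimodule lattice. This is precisely where \emph{elementariness} (a genuinely simple bimodule), not merely naturality, is needed, and it hinges on combining Part 1 with finiteness of length so that an injective endomorphism is automatically surjective.
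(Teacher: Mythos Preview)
Your proof is correct and follows essentially the same strategy as the paper's: both reduce to the componentwise simple Artinian factors, both hinge on Part~1 together with finite length to show that multiplication by elements of $S$ is bijective on the relevant subbimodules, and both invoke Theorem~\ref{A10Sep23} to transport $\min(R)$ to $\min(S^{-1}R)$.

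There is one genuine difference in packaging worth noting. For the reverse inclusion in Part~2, the paper picks a simple $(R,A)$-subbimodule $B(\gp)\subseteq A(\gp)$ (existing by finite length), observes that $S^{-1}B(\gp)$ is a nonzero $(S^{-1}R,A)$-subbimodule of the simple bimodule $A(\gp)$, hence $S^{-1}B(\gp)=A(\gp)$, and then uses $l(B(\gp)_A)<\infty$ to conclude $B(\gp)=S^{-1}B(\gp)=A(\gp)$. You instead formulate a clean \emph{transfer lemma}---the $(R,A(\gp))$- and $(S^{-1}R,A(\gp))$-subbimodule lattices coincide---which yields simplicity of ${}_RA(\gp)_{A(\gp)}$ directly and symmetrically handles both directions. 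Your approach is arguably tidier, and it makes transparent exactly where elementariness (not mere naturality) enters. You are also more explicit than the paper about verifying naturality of $f'$ in the reverse direction: the paper simply asserts $f'\in\CN_{R,\mS}^e$ after checking bimodule simplicity, whereas you route through the collection framework (checking $\ann_R(A'(\gp))=\gp$ and invoking Lemma~\ref{b14Sep23}.(5)), which is the right way to close that gap. One minor remark: the equality $\sigma_R^{-1}(S^{-1}\gp)=\gp$ is most cleanly justified by Theorem~\ref{A10Sep23}.(3) (that $\pi_\gp(S)\in\Den_l(R/\gp,0)$) rather than by the bare fact $\gp\cap S=\emptyset$; your citation of Lemma~\ref{b10Sep23} there is slightly off target, though the conclusion is correct.
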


\begin{proof} 1. We identity the ring $R$ with its image in $A$ via the monomorphism $f$.

 (i) $\ga:=\tor_S(A)=0$: By the definition,  $\ga$ is an $(R, A)$-subbimodule of $A$, and 
$$\ga= \prod_{\gp\in \min (R)}\ga(\gp) \;\; {\rm where}\;\; \ga (\gp ):=\ga\cap A(\gp)$$ 
since the ring $A=\prod_{\gp \in \min(R)}A(\gp)$ is a finite direct product of rings $A(\gp)$. Suppose that $\ga\neq 0$. We seek a contradiction. Then $\ga (\gp)\neq 0$ for some $\gp \in \min (R)$, and so 
 $$\ga (\gp)=A(\gp)$$ since $\ga (\gp)$ is a nonzero $(R, A)$-subbimodule of the {\em simple} $(R, A)$-bimodule $A(\gp)$.  Since the right $A(\gp)$-module $A(\gp)$ is finitely generated, $sA(\gp)=0$ for some $s\in S$. 
  Recall that $\gp^c:=\bigcap_{\gq \in \min (R)\backslash \{ \gp\} }\gq\neq 0$ and $\ann_A(\gp^c)=\gp$.   Then $0=0A=\gp\gp^cA$, and so $\gp^cA\subseteq \rann_A (\gp)=A(\gp)$ (since $R\stackrel{f}{\ra}A\in \CN_{R,\mS}^e$). Finally,
 $$0\neq  s\gp^c\subseteq  s\gp^c A\subseteq sA(\gp) =0,$$
a contradiction.   
 
(ii)  $S\subseteq A^\times$: By the statement (i), for each element $s\in S$,  the map $s\cdot : A\ra A$, $a\mapsto sa$ is a right $A$-module monomorphism. Hence, an isomorphism since the length $l(A_A)<\infty$ is  finite. Then $ss'=1$ for some element $s'\in A$, and so  the map $s'\cdot : A\ra A$, $a\mapsto s'a$ is a right $A$-module monomorphism. Hence, an isomorphism, and so $s's''=1$ for    some element $s''\in A$. Then $s'=s^{-1}$, and the statement (ii) follows.  

2. By statement 1, $f(S)\subseteq A^\times$. Then by the universal property of localization, we have the monomorphism  $S^{-1}R\stackrel{S^{-1}f}{\ra} A$. Since $\min (S^{-1}R)=\{ S^{-1}\gp \, | \, \gp \in \min (R)\}$ (Theorem \ref{A10Sep23}.(2)), we have that 
 $$S^{-1}R\stackrel{S^{-1}f}{\ra} A\in \CN_{S^{-1}R,\mS}^e.$$
Therefore, 
$\CN_{S^{-1}R, \mS}^e\supseteq \{ S^{-1}R\stackrel{S^{-1}f'}{\ra} A'\, | \, R\stackrel{f'}{\ra} A'\in \CN_{R,\mS}^e\}$. 

To finish the proof of statement 2,  it remains to show that the reverse inclusion holds.
 Let $S^{-1}R\stackrel{g}{\ra}A\in \CN_{S^{-1}R, \mS}^e$.  Since $l(A_A)<\infty$, for each $\gp \in \min (R)$, the $(R,A)$-bimodule $A(\gp)$ contains  a simple $(R,A)$-subbimodule, say $B(\gp)$. Then $S^{-1}B(\gp)$ is a simple 
$(S^{-1}R,A)$-subbimodule of the simple $(S^{-1}R,A)$-bimodule $S^{-1}A(\gp) =A(\gp)$, and so $S^{-1}B(\gp)=A(\gp)$.
 Since $l(B(\gp)_A)<\infty$, $$B(\gp)=S^{-1}B(\gp)=A(\gp),$$ i.e. $S^{-1}R\stackrel{g}{\ra}A\in \CN_{R, \mS}^e$, as required.


3. Statement 3 is a particular case of statement 2. 
\end{proof}

\begin{corollary}\label{aA21Sep23}
Let $R$ be a semiprime ring with $|\min (R)|<\infty$, $R\stackrel{f}{\ra} A= \prod_{\gp \in \min (R)}A(\gp)\in \CN_{R,\mS}^e$ and $S\in \Den_r(R, 0)$. Then 
\begin{enumerate}

\item $f(S)\subseteq A^\times$. 

\item $RS^{-1}\stackrel{fS^{-1}}{\ra} A\in \CN_{RS^{-1},\mS}^e$ and
$\CN_{RS^{-1}, \mS}^e=\{R S^{-1}\stackrel{f'S^{-1}}{\ra} A'\, | \, R\stackrel{f'}{\ra} A'\in \CN_{R,\mS}^e\}$.

\item $Q_r(R)\stackrel{fS_r(R)^{-1}}{\ra} A\in \CN_{Q_r(R),\mS}^e$ and
$\CN_{Q_r(R), \mS}^e=\{ Q_r(R)\stackrel{ f'S_l(R)^{-1}}{\ra} A'\, | \, R\stackrel{f'}{\ra} A'\in \CN_{R,\mS}^e\}$.
\end{enumerate}
\end{corollary}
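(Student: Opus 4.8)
The plan is to deduce this corollary from its left-handed counterpart, Theorem \ref{A21Sep23}, by passing to opposite rings. First I would record the opposite-ring dictionary. For any ring $T$, ideals of $T$ and of $T^{\rm op}$ coincide and primeness is unchanged, so $\min (T)=\min (T^{\rm op})$; in particular $R$ is semiprime with $|\min (R)|<\infty$ iff $R^{\rm op}$ is. A multiplicative subset $S$ of $R$ lies in $\Den_r(R,0)$ iff $S$, viewed in $R^{\rm op}$, lies in $\Den_l(R^{\rm op},0)$, and then by the universal property of Ore localization applied on both sides $RS^{-1}=(S^{-1}R^{\rm op})^{\rm op}$ canonically. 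Likewise $S_r(R)=S_l(R^{\rm op})$, whence $Q_r(R)=(Q_l(R^{\rm op}))^{\rm op}$ and the localized monomorphism $fS_r(R)^{-1}$ is identified with $(S_l(R^{\rm op})^{-1}f^{\rm op})^{\rm op}$.

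Next I would check that the classes $\CN_{R,\mS}$ and $\CN_{R,\mS}^e$ transfer under $(-)^{\rm op}$. If $A=\prod_{\gp\in\min(R)}A(\gp)$ is semisimple Artinian with $A(\gp)$ simple Artinian, then $A^{\rm op}=\prod_{\gp\in\min(R)}A(\gp)^{\rm op}$ is of the same type, and a monomorphism $R\stackrel{f}{\ra}A$ yields a monomorphism $R^{\rm op}\stackrel{f^{\rm op}}{\ra}A^{\rm op}$. Under $(-)^{\rm op}$ a left ideal becomes a right ideal and $\lann_A(\gp)$ is interchanged with $\rann_{A^{\rm op}}(\gp)$, so the defining condition $\lann_A(\gp)=\rann_A(\gp)=A(\gp)$ of $\CN_{R,\mS}$ is self-dual; hence $f\in\CN_{R,\mS}$ iff $f^{\rm op}\in\CN_{R^{\rm op},\mS}$. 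Similarly, an $(R/\gp,A(\gp))$-bimodule structure on $A(\gp)$ is exactly an $((R/\gp)^{\rm op},A(\gp)^{\rm op})$-bimodule structure on $A(\gp)^{\rm op}$ with the two actions exchanged, and simplicity of the bimodule is preserved; therefore $f\in\CN_{R,\mS}^e$ iff $f^{\rm op}\in\CN_{R^{\rm op},\mS}^e$.

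With the dictionary in place the corollary is immediate: apply Theorem \ref{A21Sep23} to the semiprime ring $R^{\rm op}$, the elementary embedding $f^{\rm op}\colon R^{\rm op}\ra A^{\rm op}$, and the left denominator set $S\in\Den_l(R^{\rm op},0)$. Its three conclusions --- that $f^{\rm op}(S)\subseteq (A^{\rm op})^\times$; that $S^{-1}R^{\rm op}\ra A^{\rm op}$ is elementary with $\CN^e_{S^{-1}R^{\rm op},\mS}$ equal to the set of localizations of elements of $\CN^e_{R^{\rm op},\mS}$; and the same statement for $Q_l(R^{\rm op})$ --- translate back, using $(A^{\rm op})^\times=(A^\times)^{\rm op}$, $RS^{-1}=(S^{-1}R^{\rm op})^{\rm op}$ and $Q_r(R)=(Q_l(R^{\rm op}))^{\rm op}$, into precisely statements (1)--(3). (Theorem \ref{A10Sep23}, which governs the behaviour of $\min$ under localization, is already built into Theorem \ref{A21Sep23} and is invariant under $(-)^{\rm op}$, so nothing extra is needed there.)

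The only real content is verifying the dictionary, and the single point that deserves care is that Ore localization commutes with $(-)^{\rm op}$, i.e. that $RS^{-1}$ and $(S^{-1}R^{\rm op})^{\rm op}$ are canonically isomorphic as rings over $R$; this follows from the universal property but should be spelled out. If one prefers to avoid opposite rings altogether, the alternative is to transcribe the proof of Theorem \ref{A21Sep23} with ``left'' and ``right'' interchanged: the torsion-annihilation step uses the simple $(R,A)$-bimodule $A(\gp)$ symmetrically, and the passage from injectivity of multiplication by $s\in S$ to its invertibility only needs finite length of $A$ as a one-sided $A$-module on the relevant side, both of which hold verbatim.
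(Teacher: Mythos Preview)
Your proposal is correct and follows exactly the paper's own approach: the paper's proof consists of the single sentence ``The first condition of Theorem \ref{A21Sep23} is left right symmetric, and the corollary follows (by using opposite rings).'' Your write-up simply fleshes out the opposite-ring dictionary that the paper leaves implicit; the one place worth tightening is the claim that elementarity transfers under $(-)^{\rm op}$, since an $(R,A(\gp))$-bimodule becomes an $(A(\gp)^{\rm op},R^{\rm op})$-bimodule rather than an $(R^{\rm op},A(\gp)^{\rm op})$-bimodule --- but the needed equivalence (that ${}_RA(\gp)_{A(\gp)}$ is simple iff ${}_{A(\gp)}A(\gp)_R$ is simple) does hold for simple Artinian $A(\gp)$, as both lattices of sub-bimodules are identified with the lattice of $R$-stable $D(\gp)$-subspaces of the simple $A(\gp)$-module.
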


\begin{proof} The first condition of Theorem \ref{A21Sep23} is left right symmetric, and the corollary follows (by using opposite rings).
\end{proof}

{\bf Descriptions of the minimal embeddings for the semiprime left or right Goldie rings.} Let us recall the result below that is used in the proof of Theorem \ref{B21Sep23}.

\begin{theorem}\label{VBB-21Sep 23}
(\cite[Theorem 4.1]{NewCrit-S-Simpl-lQuot})
Let $R$ be a ring. The following statements are equivalent:
\begin{enumerate}
\item The ring $Q_l(R)$ is a semisimple Artinian ring. 
\item
\begin{enumerate}
\item The ring $R$ is a semiprime ring.
\item $|\min (R)|<\infty$.
\item For each $\gp \in \min (R)$, the  set set $S_\gp :=\{ c\in R\, | \, c+\gp \in \CC_{R/\gp}\}$ is a left denominator set of $R$ with $\ass_l(S_\gp)=\gp$.
\item For each $\gp \in \min (R)$, the ring $S_\gp^{-1}R$ is a simple Artinian ring.
\end{enumerate}
If the equivalent conditions hold then $\max\Den_l(R)=\{ S_\gp\, | \, \gp \in \min (R)\}$ and 
 $Q_l(R)\simeq\prod_{\gp \in \min (R)}S_\gp^{-1}R$.
\end{enumerate}
\end{theorem}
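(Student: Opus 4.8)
The plan is to route everything through Goldie's Theorem (Theorem~\ref{5Jul10}): by that theorem statement (1) is equivalent to ``$R$ is a semiprime left Goldie ring'', and in that case $Q_l(R)=Q_{l,cl}(R)$. So I would prove that (2a)--(2d) are \emph{jointly} equivalent to ``$R$ is semiprime left Goldie'', and then simply read off the displayed formulas from the standard structure theory of semiprime Goldie rings.

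For $(1)\Rightarrow(2)$ I would start from $Q:=Q_{l,cl}(R)=\prod_{i=1}^n A_i$ with $A_i$ simple Artinian. Then $R$ is semiprime (a nilpotent ideal of $R$ would generate one of $Q$), giving (2a); by the usual correspondence for semiprime Goldie rings the minimal primes of $R$ are exactly the contractions $\gp_i:=R\cap\bigoplus_{j\neq i}A_j$, so $|\min(R)|=n<\infty$, which is (2b); and for $\gp=\gp_i$ the factor $R/\gp$ is a prime left Goldie ring with $Q_{l,cl}(R/\gp)\cong A_i$, which is simple Artinian, which is (2d). The remaining point, (2c), is where I expect to do real work: $S_\gp$ is visibly multiplicative with $0\notin S_\gp$; that $\ass_l(S_\gp)=\gp$ follows because $cr=0$ with $c\in S_\gp$ forces $r\in\gp$ (reduce modulo $\gp$ and use that $c+\gp$ is regular), while for $r\in\gp$ one writes the central idempotent $e_\gp=s^{-1}a$ with $s\in\CC_R$, $a\in R$, checks $a\in S_\gp$ (its image in $A_i$ is a unit, hence $a+\gp\in\CC_{R/\gp}$) and $ar=0$ in $Q$, hence in $R$; finally the left Ore and reversibility conditions for $S_\gp$ inside $R$ are obtained by first solving them in the left-Ore ring $R/\gp$ and then annihilating the resulting error term, which lies in $\gp=\ass_l(S_\gp)$, by left multiplication by a suitable element of $S_\gp$. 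Hence $S_\gp\in\Den_l(R,\gp)$, which is (2c).

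For $(2)\Rightarrow(1)$ I would argue as follows. By (2c) and $\ass_l(S_\gp)=\gp$ one has $S_\gp^{-1}R=S_\gp^{-1}(R/\gp)=\CC_{R/\gp}^{-1}(R/\gp)=Q_{l,cl}(R/\gp)$, which by (2d) is simple Artinian, so each $R/\gp$ is prime left Goldie (Goldie--Lesieur--Croisot). Since $\bigcap_{\gp\in\min(R)}\gp=\gn_R=0$ by (2a), the ring $R$ embeds, as a left module, into $\prod_{\gp\in\min(R)}R/\gp$; each $R/\gp$ has finite left uniform dimension as a ring, hence as a left $R$-module (its $R$-submodules are its left ideals), so by (2b) and passage to a submodule $R$ has finite left uniform dimension. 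For the ascending chain condition on left annihilators I would use the identity $\lann_R(X)=\bigcap_{\gp}\pi_\gp^{-1}\big(\lann_{R/\gp}(\pi_\gp(X))\big)$ for every $X\subseteq R$ (the nontrivial inclusion uses $\bigcap\gp=0$): replacing $X$ by $\rann_R(\lann_R(X))$ along a chain one may assume the ideals $\lann_{R/\gp}(\pi_\gp(X))$ are ascending in each $R/\gp$, and the a.c.c.\ in the finitely many prime quotients $R/\gp$ then forces the chain in $R$ to stabilize. Thus $R$ is semiprime left Goldie and (1) holds by Theorem~\ref{5Jul10}. The closing assertions come out at the same time: $Q_{l,cl}(R)\cong\prod_{\gp}Q_{l,cl}(R/\gp)=\prod_{\gp}S_\gp^{-1}R$ by the standard decomposition; the surjectivity of $\min(R)\ra\min(Z(R))$-type bookkeeping is not needed here, but each $S_\gp$ is a maximal left denominator set because any $T\supseteq S_\gp$ in $\Den_l(R)$ yields an injection of the simple Artinian ring $S_\gp^{-1}R$ into $T^{-1}R$ which is then an isomorphism, forcing $\ass_l(T)=\gp$ and, since an element of $T$ has unit image in $T^{-1}R\cong Q_{l,cl}(R/\gp)$, also $T\subseteq S_\gp$; comparing with the First Criterion quoted above, $|\maxDen_l(R)|$ equals the number of simple factors of $Q_{l,cl}(R)$, i.e.\ $|\min(R)|$, so $\maxDen_l(R)=\{S_\gp\mid\gp\in\min(R)\}$.

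The step I expect to be the main obstacle is the verification of (2c) in the direction $(1)\Rightarrow(2)$ --- in particular exploiting the central idempotents $e_\gp$ of $Q_{l,cl}(R)$ to produce enough members of each $S_\gp$ so that $\ass_l(S_\gp)=\gp$ and so that the Ore and reversibility conditions can be lifted from $R/\gp$ to $R$ --- together with, on the other side, the bookkeeping identity $\lann_R(X)=\bigcap_\gp\pi_\gp^{-1}(\lann_{R/\gp}(\pi_\gp(X)))$ that promotes the ascending chain condition on left annihilators from the prime quotients $R/\gp$ to $R$.
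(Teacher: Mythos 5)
There is nothing in this paper to compare your argument against: Theorem~\ref{VBB-21Sep 23} is stated here only as a quotation of \cite[Theorem 4.1]{NewCrit-S-Simpl-lQuot}, and no proof is given in this text. Judged on its own, your reconstruction is essentially correct. Routing both sides through Theorem~\ref{5Jul10} (``$Q_l(R)$ semisimple $\Leftrightarrow$ $R$ semiprime left Goldie'') is a sound strategy, and the two places where the real content lies are handled properly: for $(1)\Rightarrow(2c)$, writing the central idempotent $e_\gp=s^{-1}a$ of $Q_{l,cl}(R)$ gives a single element $a=se_\gp\in S_\gp$ with $a\gp=0$, which together with the easy inclusion $\ass_l(S_\gp)\subseteq\gp$ yields $\ass_l(S_\gp)=\gp$, after which the Ore and reversibility conditions lift from $R/\gp$ exactly as you say (the error term lies in $\gp=\ass_l(S_\gp)$ and is killed by a further element of $S_\gp$); for $(2)\Rightarrow(1)$, the identity $\lann_R(X)=\bigcap_\gp\pi_\gp^{-1}\bigl(\lann_{R/\gp}(\pi_\gp(X))\bigr)$ is valid because $\bigcap_{\gp\in\min(R)}\gp=0$, and replacing the terms of an ascending chain of left annihilators by $\lann_R(\rann_R(-))$ does make the induced chains in each $R/\gp$ ascending, so the a.c.c.\ descends from the finitely many prime quotients.

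Two points deserve a word more care than your sketch gives them. First, in the maximality argument for $S_\gp$, the injection $S_\gp^{-1}R\hookrightarrow T^{-1}R$ is surjective only after one observes that each $t\in T$ has image lying in the simple Artinian subring $\im(S_\gp^{-1}R)$ and is a unit of the overring, hence (the subring being Artinian) already a unit of the subring, so $t^{-1}\in\im(S_\gp^{-1}R)$; with that observation the rest ($\ass_l(T)=\gp$ and $T\subseteq S_\gp$) follows as you indicate. Second, your count $|\maxDen_l(R)|=|\min(R)|$ leans on the First Criterion, which is itself only quoted in this paper from the same external source; that is acceptable as a reduction but means your proof is not fully self-contained. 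Neither point is a gap in substance.
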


By Theorem   \ref{5Jul10}, in statement 1 above the ring $Q_l(R)$ is equal to $Q_{l,cl}(R)$. 

Theorem \ref{B21Sep23} describes the minimal embeddings for all semiprime left Goldie rings and their localizations. 

\begin{theorem}\label{B21Sep23}
Let $R$ be a semiprime left Goldie ring. Then 
\begin{enumerate}

\item    $R\stackrel{\s}{\ra} Q_l(R)= Q_{l,cl}(R)= \prod_{\gp \in \min (R)}S_\gp^{-1}R\in \CN_{R,\mS}^e$. 

\item $\min \CM_{R,\mS}=\min \CN_{R,\mS}^e=\{ R\stackrel{\s}{\ra} Q_{l,cl}(R)\}$.   

\item For each $S\in \Den_l(R,0)$, $S^{-1}R\stackrel{\s}{\ra}Q_{l,cl}(R)\simeq Q_{l,cl}(S^{-1}R)\in \CN_{S^{-1}R,\mS}^e$. 

\item For each $S\in \Den_l(R,0)$, $\min \CM_{S^{-1}R,\mS}=\min \CN_{S^{-1}R,\mS}^e=\{ S^{-1}R\stackrel{\s}{\ra} Q_{l,cl}(R)\}$.  

\end{enumerate}
\end{theorem}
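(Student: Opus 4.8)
The plan is to obtain (1) directly from Goldie's Theorem together with Theorem \ref{VBB-21Sep 23}, then to deduce (2) by exhibiting $\sigma$ as the least element of $\CN_{R,\mS}^e$, and finally to reduce (3) and (4) to (1) and (2) applied to the ring $S^{-1}R$.

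For (1): by Theorem \ref{5Jul10} the ring $Q_l(R)=Q_{l,cl}(R)$ exists and is semisimple Artinian, so the equivalent conditions of Theorem \ref{VBB-21Sep 23} hold and $Q_{l,cl}(R)\simeq\prod_{\gp\in\min(R)}S_\gp^{-1}R$ with each $A(\gp):=S_\gp^{-1}R$ a simple Artinian ring; here $\sigma$ is identified with the product of the localization maps $\sigma_\gp\colon R\to S_\gp^{-1}R$, which have $\ker\sigma_\gp=\ass_l(S_\gp)=\gp$. I then verify the two defining properties of an elementary embedding. Simplicity of ${}_RA(\gp)_{A(\gp)}$: a nonzero $(R,A(\gp))$-subbimodule $I$ of $A(\gp)$ is a right ideal, and for $s\in S_\gp$ left multiplication by the unit $\sigma_\gp(s)$ is a length-preserving injective endomorphism of the Artinian right module $A(\gp)$ with $\sigma_\gp(s)I\subseteq I$, forcing $\sigma_\gp(s)I=I$ and hence $\sigma_\gp(s)^{-1}I=I$; since $A(\gp)=S_\gp^{-1}R$ is generated as a ring by $\sigma_\gp(R)$ and the elements $\sigma_\gp(s)^{-1}$, $I$ is a two-sided ideal of the simple ring $A(\gp)$, so $I=A(\gp)$. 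Naturality: $\sigma_\gp(\gp)=0$, while for $\gq\neq\gp$ the ideal $S_\gq^{-1}\gp$ of the simple ring $A(\gq)$ is nonzero (as $\gp\not\subseteq\gq=\ass_l(S_\gq)$), hence equals $A(\gq)$, so $\gp$ contains an element whose image in $A(\gq)$ is a unit; a componentwise computation then gives $\lann_A(\gp)=\rann_A(\gp)=A(\gp)$ for $A=\prod_\gp A(\gp)$. Thus $\sigma\in\CN_{R,\mS}^e$.

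For (2): by Corollary \ref{e15Sep23} it suffices to prove $\min\CN_{R,\mS}^e=\{\sigma\}$, and for this I show $\sigma\leq f$ for every $f\colon R\to A'$ in $\CN_{R,\mS}^e$. Since $R$ is semiprime left Goldie, $\CC_R=S_l(R)\in\Den_l(R,0)$ by Theorem \ref{5Jul10}, so Theorem \ref{A21Sep23}.(1) gives $f(\CC_R)\subseteq(A')^\times$; the universal property of $Q_{l,cl}(R)=\CC_R^{-1}R$ then produces a ring homomorphism $\widetilde f\colon Q_{l,cl}(R)\to A'$ with $\widetilde f\circ\sigma=f$. The kernel of $\widetilde f$ is an ideal of $\prod_\gp S_\gp^{-1}R$, hence a product of a subset of the factors; if it contained the $\gp$-th factor, then, picking $0\neq r\in\gp^{\rm c}:=\bigcap_{\gq\in\min(R)\backslash\{\gp\}}\gq$ (which is nonzero by Lemma \ref{b10Sep23}), the element $\sigma(r)$ would be supported only on the $\gp$-th factor, giving $f(r)=\widetilde f(\sigma(r))=0$ and contradicting the injectivity of $f$. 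So $\widetilde f$ is a monomorphism and $\sigma\leq f$; hence $\sigma$ is the least element of $\CN_{R,\mS}^e$, and with Corollary \ref{e15Sep23}, $\min\CM_{R,\mS}=\min\CN_{R,\mS}=\min\CN_{R,\mS}^e=\{\sigma\}$. This injectivity step — excluding the collapse of a simple factor of $Q_{l,cl}(R)$, which is exactly what the auxiliary ideal $\gp^{\rm c}$ handles — is the main obstacle of the proof; everything else is an assembling of Goldie's Theorem, Theorem \ref{VBB-21Sep 23}, Theorem \ref{A21Sep23} and Corollary \ref{e15Sep23}.

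For (3) and (4): let $S\in\Den_l(R,0)$. Being a left denominator set with $\ass_l(S)=0$, $S$ consists of regular elements, so $S\subseteq\CC_R$ and the canonical map $S^{-1}R\to\CC_R^{-1}R=Q_{l,cl}(R)$ is an injection; moreover every element of $Q_{l,cl}(R)$ has the form $\overline{c}^{-1}y$ with $\overline{c}$ the image of some $c\in\CC_R$ in $S^{-1}R$ (a regular element of $S^{-1}R$) and $y\in S^{-1}R$, so $S^{-1}R$ is a left order in the semisimple Artinian ring $Q_{l,cl}(R)$. By Theorem \ref{5Jul10} the ring $S^{-1}R$ is therefore semiprime left Goldie with $Q_{l,cl}(S^{-1}R)\simeq Q_{l,cl}(R)$ (alternatively this follows from Theorem \ref{A10Sep23} and Theorem \ref{A21Sep23}.(2)), the canonical map $S^{-1}R\to Q_{l,cl}(S^{-1}R)$ corresponding under this isomorphism to the map $S^{-1}R\to Q_{l,cl}(R)$ in the statement. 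Then (3) is (1) applied to the ring $S^{-1}R$, and (4) is (2) applied to the ring $S^{-1}R$.
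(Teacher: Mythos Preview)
Your proof is correct. The overall architecture matches the paper's: use Theorem~\ref{VBB-21Sep 23} for the decomposition in~(1), factor any elementary embedding through $Q_{l,cl}(R)$ for~(2), and pass to $S^{-1}R$ for~(3)--(4). The route differs in two places.

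For~(1), you verify directly that ${}_RA(\gp)_{A(\gp)}$ is simple (using that $\sigma_\gp(s)I=I$ by a length argument) and that $\lann_A(\gp)=\rann_A(\gp)=A(\gp)$ componentwise. The paper instead observes that the identity $\tau\colon Q_{l,cl}(R)\to Q_{l,cl}(R)$ is trivially elementary for $Q_{l,cl}(R)$ and then invokes (the reverse inclusion in the proof of) Theorem~\ref{A21Sep23}.(2) to descend the elementarity from $\tau$ to $\sigma$. Your argument is more hands-on and self-contained; the paper's is shorter once Theorem~\ref{A21Sep23} is in hand.

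For~(2), your injectivity argument via $\gp^{\rm c}$ is valid but heavier than necessary: since every element of $Q_{l,cl}(R)$ is $c^{-1}r$ with $c\in\CC_R$, one has $\widetilde f(c^{-1}r)=f(c)^{-1}f(r)$, so $\widetilde f$ is injective simply because $f$ is. The paper draws the commutative triangle and does not comment further, implicitly relying on this. Your detour through the simple factors and Lemma~\ref{b10Sep23} works, but it is not where the content lies.

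For~(3)--(4), you show $S^{-1}R$ is again semiprime left Goldie with $Q_{l,cl}(S^{-1}R)\simeq Q_{l,cl}(R)$ and then reapply~(1)--(2). The paper short-circuits this by citing Theorem~\ref{A21Sep23}.(2), which identifies $\CN_{S^{-1}R,\mS}^e$ with $\CN_{R,\mS}^e$ via $f\mapsto S^{-1}f$; the minimality of $\sigma$ then transports to $S^{-1}\sigma$. Both approaches yield the same conclusion with comparable effort.
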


\begin{proof} 1. By Theorem  \ref{VBB-21Sep 23},  $R\stackrel{\s}{\ra} Q_{l,cl}(R)\in \CN_{R,\mS}$ and $Q_{l,cl}(R)\stackrel{\tau}{\ra} Q_{l,cl}(R)\in \CN_{R,\mS}^e$. Since $\tau = S_l(R)^{-1}\s$, 
$$R\stackrel{\s}{\ra} Q_{l,cl}(R)\in \CN_{R,\mS}^e,$$ by Theorem \ref{A21Sep23}.(3) (repeat the proof of Theorem \ref{A21Sep23}.(2) where $S=S_l(R)$).

2. By Corollary \ref{e15Sep23}, $\min \CM_{R,\mS}=\min \CN_{R,\mS}^e$.  For each elementary embedding $R\stackrel{f}{\ra}A=\prod_{\gp \in \min (R)}A(\gp)\in\CN_{R,\mS}^e$, there is a commutative diagram

 $$\begin{array}[c]{ccc}
 R&\stackrel{f}{\ra} &A\\
&{\searrow}^{\s}& {\;\;\;\uparrow}^{\CC_R^{-1}f}\\
&  &Q_{l,cl}(R)\;\;
\end{array}$$
Therefore, $\min \CN_{R,\mS}^e=\{ R\stackrel{\s}{\ra} Q_{l,cl}(R)\}$, by statement 1.  

3 and 4. By Theorem \ref{A21Sep23}.(2),   statements 3 and 4 follows from  statements 1 and 2. 
\end{proof}

{\bf Criterion for $\CM_{R, \mS}\neq \emptyset$ where $R$ is a semiprime ring.} Theorem \ref{21Sep23} is a criterion for a semiprime ring $R$ to be embeddable into  semisimple Artinian    rings. The key idea of its proof is Proposition \ref{A22Sep23}.

\begin{proposition}\label{A22Sep23}
Let $R$ be a semiprime ring with $|\min (R)|<\infty$. Then for each elementary embedding $R\stackrel{f}{\ra}A=\prod_{\gp \in \min (R)}A(\gp)\in \CN_{R,\mS}^e$ there is a commutative diagram of ring homomorphisms:

$$\begin{array}[c]{ccccc} 
R & \stackrel{}{\ra} & \prod_{\gp\in \min(R)}R/\gp &  \stackrel{\prod_{\gp \in \min (R)}S_l(R/\gp)^{-1}}{\ra} & \prod_{\gp\in \min(R)}Q_l(R/\gp) \\
&{\searrow}^{f} &{\;\downarrow}  & {\swarrow}_{f'}&\\
&  &A=\prod_{\gp\in \min (R)}A(\gp) & &\;\;
\end{array}$$

\end{proposition}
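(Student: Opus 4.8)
The plan is to reduce the statement to the prime factor rings $R/\gp$, $\gp\in\min(R)$, and to apply Theorem \ref{A21Sep23}.(1) to each of them. Since $R$ is semiprime with finitely many minimal primes, each $\gp\in\min(R)$ is a prime ideal, so $R/\gp$ is a prime ring with $\min(R/\gp)=\{0\}$.

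The first step is to see that the embedding $f$ descends componentwise. Since $f\in\CN_{R,\mS}^e\subseteq\CN_{R,\mS}$ (see (\ref{f=pVf-1})), Proposition \ref{a16Sep23}.(2) applies: for each $\gp\in\min(R)$ we have $R\cap A\gp A=\gp$ and $A/A\gp A\simeq A(\gp)$, so the composite $R\stackrel{f}{\ra}A\ra A/A\gp A\simeq A(\gp)$ (the last map being $a+A\gp A\mapsto ae_{A(\gp)}$) has kernel exactly $\gp$ and therefore factors through an embedding $\iota_\gp: R/\gp\hookrightarrow A(\gp)$. Because $R$ is semiprime, the diagonal map $R\ra\prod_{\gp\in\min(R)}R/\gp$, $r\mapsto(r+\gp)_\gp$, is injective, and composing it with $\prod_{\gp}\iota_\gp:\prod_{\gp}R/\gp\ra\prod_{\gp}A(\gp)=A$ recovers $f$, since $f(r)=(f(r)e_{A(\gp)})_\gp=(\iota_\gp(r+\gp))_\gp$. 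This produces the vertical arrow $\prod_{\gp}R/\gp\ra A$ and the commutativity of the left-hand triangle.

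The second step extends to the localizations. For each $\gp$, the embedding $\iota_\gp$ is an elementary embedding of the \emph{prime} ring $R/\gp$ into the simple Artinian ring $A(\gp)$: the bimodule ${}_{R/\gp}A(\gp)_{A(\gp)}$ is simple because $f$ is elementary, and naturality holds trivially since $\min(R/\gp)=\{0\}$. Thus $\iota_\gp\in\CN_{R/\gp,\mS}^e$. The set $S_l(R/\gp)$ is a left Ore set of regular elements of $R/\gp$, hence $S_l(R/\gp)\in\Den_l(R/\gp,0)$, so Theorem \ref{A21Sep23}.(1), applied to the ring $R/\gp$, the embedding $\iota_\gp$ and the denominator set $S_l(R/\gp)$, gives $\iota_\gp(S_l(R/\gp))\subseteq A(\gp)^\times$. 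By the universal property of left Ore localization, $\iota_\gp$ extends uniquely to a ring homomorphism $\iota_\gp':Q_l(R/\gp)=S_l(R/\gp)^{-1}(R/\gp)\ra A(\gp)$ with $\iota_\gp'\circ(S_l(R/\gp)^{-1})=\iota_\gp$. Setting $f':=\prod_{\gp\in\min(R)}\iota_\gp':\prod_{\gp}Q_l(R/\gp)\ra\prod_{\gp}A(\gp)=A$ makes the right-hand triangle commute, and composing $f'$ with the product of localization maps and the diagonal of the first step again recovers $f$. This is precisely the asserted diagram.

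There is no essential obstacle once the objects are correctly identified: the whole content is the invertibility statement $\iota_\gp(S_l(R/\gp))\subseteq A(\gp)^\times$, which is Theorem \ref{A21Sep23}.(1) (itself resting on $\tor_S(A(\gp))=0$, valid because $A(\gp)$ is a simple $(R/\gp,A(\gp))$-bimodule of finite length). The only point that deserves care is verifying that $\iota_\gp$ is genuinely an elementary embedding of the prime ring $R/\gp$, so that Theorem \ref{A21Sep23} is applicable with $R$ replaced by $R/\gp$; everything else is bookkeeping with the universal property of localization and the product decomposition $A=\prod_{\gp\in\min(R)}A(\gp)$.
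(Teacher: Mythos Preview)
Your proof is correct and follows essentially the same route as the paper: both reduce componentwise to the prime factors $R/\gp$, observe that $\iota_\gp:R/\gp\hookrightarrow A(\gp)$ is an elementary embedding of a prime ring, and then invoke Theorem~\ref{A21Sep23}.(1) (together with the universal property of localization) to extend $\iota_\gp$ through $Q_l(R/\gp)$. The paper is terser (calling the left triangle ``obvious'' and citing Theorem~\ref{A21Sep23}.(1,3) without spelling out the verification that $\iota_\gp\in\CN_{R/\gp,\mS}^e$), but your added detail---in particular the use of Proposition~\ref{a16Sep23}.(2) to justify the factorization through $R/\gp$---is exactly what is implicit there.
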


\begin{proof} The commutativity of the left triangle is obvious. For each $\gp \in \min (R)$, there are monomorphisms $R/\gp\ra A(\gp)$ and $R/\gp\stackrel{S_l(R/\gp)^{-1}}{\ra} Q_l(R/\gp)$. Now, by applying Theorem \ref{A21Sep23}.(1,3) to the elementary embedding $R/\gp\ra A(\gp)$, we obtain the  commutative diagram
$$\begin{array}[c]{ccccc} 
 R/\gp &  \stackrel{S_l(R/\gp)^{-1}}{\ra} & Q_l(R/\gp) \\
{\;\downarrow}  & {\;\;\;\;\;\;  \swarrow}_{f_\gp'}&\\
A(\gp) & &\;\;
\end{array}$$
and so the right triangle of the proposition is also commutative. 
\end{proof}

\begin{proof} {\bf (Proof of Theorem \ref{21Sep23})}    $(1\Leftrightarrow 2)$ The equivalence follows from Theorem \ref{A21Sep23}.(3).

$(1\Rightarrow 3)$  Suppose that $\CM_{R, \mS}\neq \emptyset$. Then, by Theorem \ref{B9Sep23}.(1),  $|\min (R)|<\infty$ and, by Proposition \ref{A22Sep23}, the ring $Q_l(R/\gp)$ is embeddable into a simple Artinian ring for every $\gp \in\min (R)$.

$(3\Rightarrow 1)$ Suppose that  $|\min (R)|<\infty$ and the ring $Q_l(R/\gp)$ is embeddable into a simple Artinian ring for every $\gp \in\min (R)$, say  $Q_l(R/\gp)\ra A(\gp)$. Then 
the embedding $$R\ra \prod_{\gp \in \min (R)}Q_l(R/\gp)\ra  \prod_{\gp \in \min (R)}A(R/\gp)$$ belongs to $\CM_{R,\mS}$.

$(1\Leftrightarrow 4,5)$ The equivalences follows from the equivalences  $(1\Leftrightarrow 2,3)$ and the left-right symmetry of the  first condition.        
 \end{proof}

The commutative diagram in Proposition \ref{A22Sep23} can be refine in order to include  the ring $Q_l(R)$.

\begin{lemma}\label{a22Sep23}
Let $R$ be a semiprime ring with $|\min (R)|<\infty$. Then for each elementary  embedding $R\stackrel{f}{\ra}A=\prod_{\gp \in \min (R)}A(\gp)\in \CN_{R,\mS}^e$, there is a commutative diagram of ring homomorphisms:
$$\begin{array}[c]{ccccccc} 
R & \stackrel{}{\ra} & Q_l(R)&  \stackrel{}{\ra}&\prod_{\gp\in \min(R)}Q_l(R)/S_l(R)^{-1}\gp & \ra & \prod_{\gp\in \min(R)}Q_l(R/\gp) \\
&{\searrow}^{f} &{\;\;\;\;\;\;\;\;\;\downarrow}^{S_l(R)^{-1}f}  & {\;\;\;\;\;\swarrow}& & {\swarrow}_{f'}& \\
&  &A=\prod_{\gp\in \min (R)}A(\gp) & & & & \;\;
\end{array}$$

\end{lemma}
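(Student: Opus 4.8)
The plan is to build the diagram out of three ingredients already available: the localization $Q_l(R)=S_l(R)^{-1}R$ together with the description of $\min(Q_l(R))$ in Corollary \ref{c10Sep23}, the fact (Theorem \ref{A21Sep23}.(3)) that an elementary embedding of $R$ extends to an elementary embedding of $Q_l(R)$, and the oblique map $f'$ already constructed in Proposition \ref{A22Sep23}. Everything then reduces to a universal-property check, with no genuinely new argument.

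First I would produce the maps. Since $R\stackrel{f}{\ra}A\in\CN_{R,\mS}^e$, Theorem \ref{A21Sep23}.(1,3) gives $f(S_l(R))\subseteq A^\times$, hence a unique ring homomorphism $S_l(R)^{-1}f\colon Q_l(R)\ra A$ with $(S_l(R)^{-1}f)\circ\s_R=f$, where $\s_R\colon R\ra Q_l(R)$ is the localization map; moreover $S_l(R)^{-1}f\in\CN_{Q_l(R),\mS}^e$. By Corollary \ref{c10Sep23}.(1,2a) the ring $Q_l(R)$ is semiprime with $\min(Q_l(R))=\{S_l(R)^{-1}\gp\ |\ \gp\in\min(R)\}$, so $\bigcap_{\gp\in\min(R)}S_l(R)^{-1}\gp=0$ and the canonical map $\pi\colon Q_l(R)\ra\prod_{\gp\in\min(R)}Q_l(R)/S_l(R)^{-1}\gp$ is a monomorphism, the second horizontal arrow. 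Corollary \ref{c10Sep23}.(2b,2c) supplies the identifications $Q_l(R)/S_l(R)^{-1}\gp\simeq\pi_\gp(S_l(R))^{-1}(R/\gp)\subseteq Q_l(R/\gp)$, giving the inclusion $\iota\colon\prod_{\gp}Q_l(R)/S_l(R)^{-1}\gp\hookrightarrow\prod_{\gp}Q_l(R/\gp)$, the third horizontal arrow. Finally the rightmost oblique map is $f'=\prod_{\gp}f'_\gp$ from (the proof of) Proposition \ref{A22Sep23}, where $f'_\gp\colon Q_l(R/\gp)\ra A(\gp)$ is obtained by applying Theorem \ref{A21Sep23}.(1,3) to the elementary embedding $R/\gp\hookrightarrow A(\gp)$; the unlabeled oblique arrow from $\prod_{\gp}Q_l(R)/S_l(R)^{-1}\gp$ to $A$ is $f'\circ\iota$.

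Then I would verify commutativity. The left triangle $R\ra Q_l(R)\ra A$ commutes by the defining property $(S_l(R)^{-1}f)\circ\s_R=f$, so it remains to prove $f'\circ\iota\circ\pi=S_l(R)^{-1}f$ as homomorphisms $Q_l(R)\ra A$. Componentwise, the $\gp$-component of $\iota\circ\pi\circ\s_R$ sends $r$ to the class of $r/1$ in $Q_l(R)/S_l(R)^{-1}\gp$, which under the identification of Corollary \ref{c10Sep23}.(2b) is $(r+\gp)/1\in Q_l(R/\gp)$; this is exactly the $\gp$-component of $\bigl(\prod_{\gp}S_l(R/\gp)^{-1}\bigr)$ composed with the canonical map $R\ra\prod_{\gp}R/\gp$. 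Hence $\iota\circ\pi\circ\s_R$ equals the top-row composite $R\ra\prod_{\gp}R/\gp\ra\prod_{\gp}Q_l(R/\gp)$ of Proposition \ref{A22Sep23}, and post-composing with $f'$ and using the commutativity of that diagram gives $f'\circ\iota\circ\pi\circ\s_R=f=(S_l(R)^{-1}f)\circ\s_R$. Thus the two ring homomorphisms $f'\circ\iota\circ\pi$ and $S_l(R)^{-1}f$ agree on $\s_R(R)$; since every element of $Q_l(R)$ has the form $\s_R(s)^{-1}\s_R(r)$ with $s\in S_l(R)$, $r\in R$, and both homomorphisms send $\s_R(s)$ to the unit $f(s)$ of $A$, they coincide on all of $Q_l(R)$, which finishes the verification.

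The only point demanding care is the bookkeeping: matching the index set $\min(Q_l(R))$ with $\min(R)$ and the simple components $A(\gp)$, and correctly transporting the element $r/1$ through the chain $Q_l(R)/S_l(R)^{-1}\gp\simeq\pi_\gp(S_l(R))^{-1}(R/\gp)\hookrightarrow Q_l(R/\gp)$ of Corollary \ref{c10Sep23}. Once this identification is pinned down, every cell of the diagram is forced by the universal property of localization, so the proof is essentially a bookkeeping exercise on top of Proposition \ref{A22Sep23}, Theorem \ref{A21Sep23}, and Corollary \ref{c10Sep23}.
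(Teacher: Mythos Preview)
Your proof is correct and uses the same ingredients as the paper: Theorem \ref{A21Sep23} for the left triangle, Corollary \ref{c10Sep23} for the identifications $Q_l(R)/S_l(R)^{-1}\gp\simeq\pi_\gp(S_l(R))^{-1}(R/\gp)\subseteq Q_l(R/\gp)$, and the componentwise maps $f'_\gp$ coming from Theorem \ref{A21Sep23} applied to $R/\gp\hookrightarrow A(\gp)$.

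The only organizational difference is in how the middle and right triangles are handled. The paper verifies each triangle separately and directly: for the middle triangle it applies Theorem \ref{A21Sep23}.(1,3) to $R/\gp\hookrightarrow A(\gp)$ with the denominator set $\pi_\gp(S_l(R))$, and for the right triangle it introduces an auxiliary denominator set $T_\gp\in\Den_l(\pi_\gp(S_l(R))^{-1}(R/\gp),0)$ (generated by the units together with $S_l(R/\gp)$) and applies Theorem \ref{A21Sep23}.(1,3) once more to extend to $Q_l(R/\gp)$. You instead define the middle oblique arrow as $f'\circ\iota$, invoke Proposition \ref{A22Sep23} as a black box for $f'$, and check the compatibility $f'\circ\iota\circ\pi=S_l(R)^{-1}f$ in one stroke by the universal property of localization (agreement on $\s_R(R)$ forces agreement on all of $Q_l(R)$). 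Your route is slightly more economical; the paper's route is more explicit about the passage from $\pi_\gp(S_l(R))^{-1}(R/\gp)$ to $Q_l(R/\gp)$. Either way the substance is the same.
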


\begin{proof} By Theorem \ref{A21Sep23}.(3),  the left triangle is commutative. For each $\gp \in \min (R)$,  the embedding $R/\gp\ra A(\gp)$  is an elementary embedding (since the embedding $R\stackrel{f}{\ra} A$ is so). By Corollary \ref{c10Sep23}.(2b), 
$$\pi_\gp (S_l(R))\in \Den_l(R/\gp , 0)\;\; {\rm  and}\;\; Q_l(R)/S_l(R)^{-1}\gp\simeq \pi_\gp (S_l(R))^{-1}(R/\gp).$$  
 Now, by applying Theorem \ref{A21Sep23}.(1,3) to the elementary embedding $R/\gp\ra A(\gp)$, we obtain the  commutative diagram
$$\begin{array}[c]{ccccc} 
 R/\gp &  \stackrel{\pi_\gp(S_l(R))^{-1}}{\ra} & \pi_\gp (S_l(R))^{-1}(R/\gp)\simeq Q_l(R)/S_l(R)^{-1}\gp \\
{\;\downarrow}  & {\;\;\;\;\;\;  \swarrow}_{f_\gp'}&\\
A(\gp) & &\;\;
\end{array}$$
and so the middle triangle in the diagram of the proposition is also commutative. 

The inclusion $\pi_\gp (S_l(R))\in \Den_l(R/\gp , 0)$ implies the inclusion  $\pi_\gp (S_l(R))\subseteq S_l(R/\gp)$. Let $U:=\Big(\pi_\gp (S_l(R))^{-1}(R/\gp) \Big)^\times$ and $T_\gp$ be a multiplicative set in the ring $\pi_\gp (S_l(R))^{-1}(R/\gp)$ which is generated by $U$ and $S_l(R/\gp)$. Then 
$$T_\gp\in \Den_l(\pi_\gp (S_l(R))^{-1}(R/\gp) , 0)\;\; {\rm and}\;\; T_\gp^{-1}(R/\gp)\simeq S_l(R/\gp)^{-1}(R/\gp)=Q_l(R/\gp).$$
 Now, by applying Theorem \ref{A21Sep23}.(1,3) to the elementary embedding $\pi_\gp (S_l(R))^{-1}(R/\gp)\ra A(\gp)$, we obtain the  commutative diagram
$$\begin{array}[c]{ccccc} 
 \pi_\gp (S_l(R))^{-1}(R/\gp) &  \stackrel{T_\gp^{-1}}{\ra} & Q_l(R/\gp) \\
{\;\downarrow}  & {\;\;\;\;\;\;  \swarrow}_{f_\gp'}&\\
A(\gp) & &\;\;
\end{array}$$
and so the right triangle of the proposition is also commutative. 
 \end{proof}

{\bf  $\Spec_e(R)$ and the ideal $\kappa_R$.}
\begin{definition}
For a ring $R$,  
$\Spec_e(R):=\{ \gp \in \Spec (R)\, | \, \CM_{R/\gp,\mS}\neq \emptyset\}$ is called the {\bf set  of  embeddable prime ideals} and the set $\CI_e(R):=\{ \ga\, | \, \ga $ is a semiprime ideal of $R$ such that $\CM_{R/\ga, \mS}\neq \emptyset\}$ is called the {\bf set  of  embeddable semiprime ideals} of $R$.
\end{definition}

\begin{lemma}\label{c22Sep23}
Let $R$ be a ring. Then $\CI_e(R)$ consists precisely  of all finite  intersections of embeddable prime ideals of $R$.
\end{lemma}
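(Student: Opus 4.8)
The plan is to establish the two inclusions separately; the heart of the matter is the identity $\ga=\bigcap_i\gp_i \Rightarrow R/\ga\hookrightarrow\prod_iR/\gp_i$ combined with the structure theory of natural embeddings developed above.

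For the inclusion ``$\supseteq$'' I would argue directly. Suppose $\ga=\bigcap_{i=1}^n\gp_i$ with each $\gp_i\in\Spec_e(R)$; then $\ga$ is semiprime, being an intersection of prime ideals. Choose, for each $i$, a monomorphism $f_i\colon R/\gp_i\ra B_i$ with $B_i$ a semisimple Artinian ring. The canonical homomorphism $R/\ga\ra\prod_{i=1}^nR/\gp_i$ is injective because $\bigcap_i\gp_i=\ga$ is precisely the kernel of $R\ra\prod_iR/\gp_i$; composing it with the monomorphism $\prod_if_i$ realizes $R/\ga$ as a subring of the semisimple Artinian ring $\prod_{i=1}^nB_i$. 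Hence $\CM_{R/\ga,\mS}\neq\emptyset$, i.e. $\ga\in\CI_e(R)$. (The empty intersection, $\ga=R$, is allowed with the convention $R/R=0\in\mS$.)

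For the inclusion ``$\subseteq$'' let $\ga\in\CI_e(R)$ and set $S:=R/\ga$; thus $S$ is semiprime and $\CM_{S,\mS}\neq\emptyset$. By Corollary \ref{a13Sep23}, $|\min(S)|<\infty$; write $\min(S)=\{\gp_1/\ga,\dots,\gp_n/\ga\}$ for uniquely determined prime ideals $\gp_1,\dots,\gp_n$ of $R$ minimal over $\ga$. Since $S$ is semiprime, $\bigcap_{i=1}^n\gp_i/\ga=\gn_S=0$, and therefore $\ga=\bigcap_{i=1}^n\gp_i$. It then remains to verify that $\gp_i\in\Spec_e(R)$ for each $i$, i.e. that $R/\gp_i\simeq S/(\gp_i/\ga)$ embeds into a semisimple Artinian ring. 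Starting from any object of $\CM_{S,\mS}$, I would pass (via Lemma \ref{a12Sep23}) to an irredundant one, and then apply Theorem \ref{B9Sep23}.(2b) to obtain a collection embedding $\phi_V\in\CM_{S,\mS}^c$, which is natural by Proposition \ref{a16Sep23}.(1). Proposition \ref{a16Sep23}.(2)(d), applied to $\phi_V$ at the minimal prime $\gp_i/\ga$ of $S$, then gives that $S/(\gp_i/\ga)$ is a subring of the simple Artinian ring $A_V(\gp_i/\ga)$; hence $\CM_{R/\gp_i,\mS}\neq\emptyset$, so $\gp_i\in\Spec_e(R)$ and $\ga$ is a finite intersection of embeddable prime ideals.

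The one genuinely delicate point is in the ``$\subseteq$'' direction, where the naive attempt --- take any embedding $S\hookrightarrow A$ and contract the ideal $A\bar\gp A$ of $A$ generated by a minimal prime $\bar\gp$ of $S$ --- fails, since $S\cap A\bar\gp A$ need not equal $\bar\gp$ for an arbitrary embedding. This is exactly what forces the reduction to a natural (collection) embedding before Proposition \ref{a16Sep23}.(2)(d) can be invoked; everything else is a routine application of the kernel description of the canonical map into a product of quotient rings.
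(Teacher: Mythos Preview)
Your proof is correct and follows essentially the same route as the paper. The paper's own proof is the one-line remark ``The lemma follows from Theorem~\ref{B9Sep23}''; your argument is precisely the unpacking of that citation: Theorem~\ref{B9Sep23}.(1) gives $|\min(R/\ga)|<\infty$, and the collection embedding produced in Theorem~\ref{B9Sep23}.(2) (together with Lemma~\ref{b14Sep23}.(4) or, equivalently, Proposition~\ref{a16Sep23}.(2d)) exhibits each $R/\gp_i$ inside a simple Artinian ring, while the $\supseteq$ direction is the obvious diagonal embedding.
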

\begin{proof}
 The lemma follows from Theorem \ref{B9Sep23}.
\end{proof}

\begin{definition}
For a ring $R$, let 
$\kappa_R:=\bigcap_{\gp\in \Spec_e(R)}\gp.$
\end{definition}


\begin{lemma}\label{b22Sep23}
Let $R$ be a ring. Then $\kappa_R$ is a semiprime ideal of the ring $R$ that contains the prime radical $\gn_R$ of the ring $R$.
\end{lemma}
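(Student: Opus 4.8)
Lemma \ref{b22Sep23} claims that $\kappa_R = \bigcap_{\gp \in \Spec_e(R)} \gp$ is a semiprime ideal of $R$ containing the prime radical $\gn_R$.

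The plan is to verify the two assertions separately, both of which are essentially immediate from the definitions and from facts already recorded in the excerpt. First, $\kappa_R$ is a semiprime ideal: by definition a semiprime ideal of $R$ is an intersection of prime ideals of $R$ (stated in Section~\ref{INTR}), and $\kappa_R$ is by construction the intersection of the family $\Spec_e(R) \subseteq \Spec(R)$ of embeddable prime ideals. One minor point to address is the edge case $\Spec_e(R) = \emptyset$, in which case the empty intersection is $R$ itself; $R$ is a semiprime ideal of $R$ (the factor ring $R/R = 0$ is vacuously semiprime), so the statement still holds. In fact, this case does not arise when $R$ has any prime ideal at all: if $\gp \in \Spec(R)$ is maximal among primes (or just any prime such that $R/\gp$ is, say, a prime left Goldie ring — but in general one should not assume this), then... — but to be safe one simply notes that $R$ counts as a semiprime ideal, so no hypothesis is needed.

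Second, $\gn_R \subseteq \kappa_R$: recall $\gn_R = \bigcap_{\gp \in \Spec(R)} \gp = \bigcap_{\gp \in \min(R)} \gp$. Since $\Spec_e(R) \subseteq \Spec(R)$, we have $\bigcap_{\gp \in \Spec(R)} \gp \subseteq \bigcap_{\gp \in \Spec_e(R)} \gp$, i.e. $\gn_R \subseteq \kappa_R$, and we are done. The only `content' here is the trivial monotonicity of intersection under passing to a smaller index set.

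I expect no real obstacle: the entire lemma is a definitional unwinding. The one thing worth a sentence in the write-up is making the containment $\Spec_e(R) \subseteq \Spec(R)$ explicit — which is immediate from the definition of $\Spec_e(R)$ as a subset of $\Spec(R)$ cut out by the condition $\CM_{R/\gp, \mS} \neq \emptyset$ — together with the convention on the empty intersection. Here is the proof:

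\begin{proof}
By definition, $\Spec_e(R) \subseteq \Spec(R)$, so $\kappa_R = \bigcap_{\gp \in \Spec_e(R)} \gp$ is an intersection of prime ideals of $R$ (with the convention that the empty intersection equals $R$). Hence $\kappa_R$ is a semiprime ideal of $R$. Moreover, since $\Spec_e(R) \subseteq \Spec(R)$,
$$\gn_R = \bigcap_{\gp \in \Spec(R)} \gp \subseteq \bigcap_{\gp \in \Spec_e(R)} \gp = \kappa_R.$$
\end{proof}
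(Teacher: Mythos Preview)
Your proof is correct and is exactly the definitional unwinding the paper has in mind; the paper's own proof is simply ``Straightforward.'' Your handling of the edge case $\Spec_e(R)=\emptyset$ and the explicit use of $\Spec_e(R)\subseteq\Spec(R)$ are appropriate and make the argument complete.
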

\begin{proof}
 Straightforward.
\end{proof}


\section{New criterion for a ring to have a semisimple Artinian quotient ring}\label{NEWCRIT-THMG}

In this   section,   a proof is given  of Theorem \ref{VVB-23Sep2023}. The  proof is based  on Proposition \ref{A24Sep23} that provides    sufficient conditions for a ring to have a semisimple Artinian quotient ring. \\





{\bf Minimal primes of a semiprime ring and minimal primes of its centre.} Lemma \ref{b23Sep23} shows that, in general, there is 
no relation between minimal primes of a semiprime ring and and its centre. But the situation is somewhat different if, in addition, the ring $R$ is a left/right Goldie ring in the sense that using information about  the centre it is possible to give a new  criterion for the left quotient ring of a ring to be a semisimple Artinian ring (Theorem \ref{VVB-23Sep2023}).

 Let $K(t)$ be be a field of rational functions in the variable $t$ over a field $K$, $A=K(t)[x; \s]$ be a skew polynomial ring where $\s(t)=qt$ where $q\in K^\times$ is a not a root of unity,  $L=K(t)[x^{\pm 1}; \s]$ be a skew polynomial ring, $B=L^n$ be  the direct product of $n\geq 2$ copies of the algebra $L$ and $1=e_1+\cdots +e_n$ be the corresponding sum of central orthogonal idempotents. The algebra $L$ is a Noetherian domain. By Goldie's  Theorem its (left and right) quotient ring $Q(L)$ is a division ring. Hence, $Q(B)\simeq Q(L)^n$. 
Clearly,   $R=K+\sum_{i=1}^n (x^{m_i})e_i$ is a subalgebra of $B$ where $(x^{m_i})$ is an ideal of $L$ generated by the regular normal element $x^{m_i}$ of $A$ where $1\leq m_1\leq\cdots \leq m_n$.  

It is a well known fact that the algebra $L$ is a central simple algebra.

\begin{lemma}\label{a23Sep23}

\begin{enumerate}

\item The algebra $B$ is a semiprime algebra,  $\min (B)=\{ \gq_i:=\sum_{j\neq i}Be_i\, | \, i=1,\ldots , n\}$, and $B/\gq_i\simeq L$ for all $i=1, \ldots , n$.

\item $Z(B)=K^n$.

\item For all $i=1, \ldots , n$, $Z(R)\cap \gp_i=K\sum_{j\neq i}e_i\neq 0$.

\end{enumerate}
\end{lemma}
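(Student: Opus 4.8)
The plan is to deduce all three statements from the structure theory of finite direct products of prime rings together with the arithmetic of the automorphism $\s\colon t\mapsto qt$ of $K(t)$, using that $q$ is not a root of unity. For (1): $L$ is a Noetherian domain, hence a prime ring, so $B=L^n$ is semiprime as a finite direct product of prime rings; for each $i$ the ideal $\gq_i:=\sum_{j\neq i}Be_j=B(1-e_i)$ satisfies $B/\gq_i\simeq Be_i\simeq L$, a domain, so $\gq_i\in\Spec(B)$, and since $\prod_{i=1}^n\gq_i\subseteq\bigcap_{i=1}^n\gq_i=0$ every prime of $B$ contains some $\gq_i$; the $\gq_i$ being pairwise incomparable ($1-e_i\in\gq_i\setminus\gq_j$ for $j\neq i$), we get $\min(B)=\{\gq_1,\dots,\gq_n\}$. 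For (2): $Z(B)=Z(L)^n$, and $L$ is a central simple $K$-algebra by the quoted fact, so $Z(L)=K$ and $Z(B)=K^n=\bigoplus_{i=1}^nKe_i$ (alternatively, $\sum_kc_kx^k\in L$ centralizing $K(t)$ forces $c_kt(1-q^k)=0$, hence $c_k=0$ for $k\neq0$, and centralizing $x$ forces $c_0\in K(t)^{\s}=K$).

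For (3) I would write $R=\bigoplus_{i=1}^nR_ie_i$ with $R_i:=K+(x^{m_i})$, where $(x^{m_i})=x^{m_i}A=Ax^{m_i}$ is the ideal of $A$ generated by the normal element $x^{m_i}$, so that $R\simeq\prod_{i=1}^nR_i$ and the $e_i$ lie in $R$. Each $R_i$ is a subring of the domain $A$, hence a domain, so by the argument of (1) the ring $R$ is semiprime with $\min(R)=\{\gp_i:=\sum_{j\neq i}R_je_j=R(1-e_i)\}$ and $R/\gp_i\simeq R_i$. Since $Z(R)=\prod_iZ(R_i)$, it suffices to prove $Z(R_i)=K$: then $Z(R)=\bigoplus_iKe_i$ and $Z(R)\cap\gp_i=\sum_{j\neq i}Ke_j\neq0$ (as $n\geq2$). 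To get $Z(R_i)=K$, take a central $z=\lambda+r\in R_i$ with $\lambda\in K$, $r=\sum_{k\geq m_i}a_kx^k\in x^{m_i}A$ ($a_k\in K(t)$); commuting $z$ with $x^{m_i}\in R_i$ gives $a_k=\s^{m_i}(a_k)$, so $a_k\in K(t)^{\s^{m_i}}=K$ since $q^{m_i}$ is not a root of unity; commuting $z$ with $x^{m_i}t\in R_i$, using $x^kx^{m_i}t=q^{\,k+m_i}tx^{k+m_i}$ and $x^{m_i}tx^k=q^{\,m_i}tx^{k+m_i}$, then forces $a_k(q^k-1)=0$ for all $k$, so $a_k=0$ for $k\geq m_i\geq1$ and $r=0$.

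The main obstacle is the identification $Z(R_i)=K$, which is where the hypothesis on $q$ is used --- twice: first through the fixed field $K(t)^{\s^{m_i}}=K$, which pins the coefficients of a central element into $K$, and then through commutation with the specific ideal element $x^{m_i}t$, which annihilates every positive power of $x$. The only delicate choice is to pick an element of $(x^{m_i})$ that genuinely records the $q$-twist, $x^{m_i}t$ serving this purpose; the remainder is routine manipulation of the skew-polynomial relations and the standard facts about minimal primes of finite direct products of prime rings.
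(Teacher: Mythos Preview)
Your arguments for (1) and (2) are correct and match the paper's proof.

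For (3), your decomposition $R\simeq\prod_{i=1}^nR_i$ with $e_i\in R$ is wrong. By definition $R=K\cdot 1_B+\sum_{i=1}^n(x^{m_i})e_i$ with $1_B=e_1+\cdots+e_n$, so every element of $R$ has the form $\lambda\cdot 1_B+\sum_ia_ie_i$ with a \emph{single} $\lambda\in K$ common to all coordinates; the individual idempotents $e_i$ do not lie in $R$ (reading off the $j$-th coordinate of an equation $e_i=\lambda\cdot 1_B+\sum_ja_je_j$ for $j\neq i$ gives $\lambda=-a_j\in K\cap(x^{m_j})=0$, whence $1=a_i\in(x^{m_i})$, a contradiction). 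Consequently $R$ is not a direct product, and in fact $Z(R)=K$ (this is Lemma~\ref{b23Sep23}.(3)), so $Z(R)\cap\gp_i=0$ for every $i$ --- the whole point of the example is precisely that the minimal primes of $R$ do \emph{not} meet the centre.

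The printed statement (3) is therefore a typo: since the lemma concerns $B$ and part (2) has just computed $Z(B)=K^n$, the intended assertion is $Z(B)\cap\gq_i=\sum_{j\neq i}Ke_j\neq 0$, which is immediate from (2) and the definition of $\gq_i$. The paper's one-line proof (``Statement 3 is obvious'') confirms this reading. Your computation that $Z(K+(x^{m_i}))=K$ is correct in itself, but it is not what is needed here; it is essentially a direct version of the argument for Lemma~\ref{b23Sep23}.(3), which the paper obtains instead via the localization $R_\xi=B$.
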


\begin{proof} 1. The algebra $L$ is a  simple algebra. Hence,  $L$ is a prime algebra, and so $B=L^n$ is a semiprime algebra. 

By the definition of the ideal $\gq_i$, $B/\gq_i\simeq L$ is a domain. Hence, the ideals $\gq_i$ are prime ideals of $B$. The ideals $\gq_i$ are distinct, $\bigcap_{i=1}\gq_i=0$ but $\bigcap_{j\neq i}\gq_j =0$ for all $i=1, \ldots , n$. By Lemma \ref{b10Sep23}, $\min (B)=\{ \gq_i\, | \, i=1,\ldots , n\}$.

2. The algebra $L$ is a central algebra, and so $Z(B)=Z(L^n)=Z(L)^n=K^n$.

3. Statement 3 is obvious. 
\end{proof}

Clearly, $\dim_K(B/R)<\infty$, i.e. the algebra is a large subalgebra of $B$.  Lemma \ref{b23Sep23} shows that the algebra $R$ is a semiprime algebra with finitely many minimal primes but its centre is small and the minimal primes do not meet the centre.

\begin{lemma}\label{b23Sep23}

\begin{enumerate}

\item The algebra $R$ is a semiprime algebra,  $\min (R)=\{ \gp_i:=\sum_{j\neq i}(x^{m_i})e_i\, | \, i=1,\ldots , n\}$, and $R/\gp_i\simeq K+ (x^{m_i})\subseteq L$ for all $i=1, \ldots , n$.

\item For each $l\geq m_n$, the element $\xi=\xi_l:=x^l(e_1+\cdots +e_n)$ is a regular normal element of $R$ such that $R_\xi =L^n$ where $R_\xi$ is the localization of algebra $R$ at the powers of the element $\xi$.

\item $Z(R)=K$.

\item For all $i=1, \ldots , n$, $Z(R)\cap \gp_i=0$.

\item $ Q(R)\simeq Q(L)^n$ where $Q(R)$ is the quotient ring of $R$.
\end{enumerate}
\end{lemma}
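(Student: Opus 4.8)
For the lemma we must verify five assertions about $R=K+\sum_{i=1}^n(x^{m_i})e_i\subseteq B=L^n$, and the plan is to treat them in order, working throughout with the $K$-module decomposition $R=K\cdot 1\oplus\bigoplus_{k=1}^n(x^{m_k})e_k$ (which is direct because each ideal $(x^{m_k})=x^{m_k}A$ of $A$ contains no nonzero scalar, as $m_k\geq 1$), together with the facts that $A=K(t)[x;\s]$ is a domain and that $x$ is a regular normal element of $A$ whose conjugation fixes each $(x^{m_k})$. For Parts 1 and 4, I would first check that each $\gp_i:=\sum_{j\neq i}(x^{m_j})e_j$ is a two-sided ideal of $R$ (using $e_je_k=0$ for $j\neq k$, centrality of the $e_j$ in $B$, and the fact that each $(x^{m_j})$ is an ideal of $A$), and that $\lambda\cdot 1+ae_i\mapsto\lambda+a$ gives an algebra isomorphism $R/\gp_i\simeq K+(x^{m_i})\subseteq A\subseteq L$; since $A$ is a domain, so is $K+(x^{m_i})$, whence $\gp_i\in\Spec(R)$. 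A componentwise inspection then gives $\bigcap_{i=1}^n\gp_i=0$ (so $R$ is semiprime) and $\bigcap_{j\neq i}\gp_j=(x^{m_i})e_i\neq 0$, i.e. $\{\gp_1,\dots,\gp_n\}$ is an irredundant family of prime ideals; by Lemma \ref{b10Sep23}, $\min(R)=\{\gp_1,\dots,\gp_n\}$. Granting Part 3 below, Part 4 is immediate: $Z(R)=K\cdot 1$, and for $\lambda\neq 0$ the element $\lambda\cdot 1=(\lambda,\dots,\lambda)$ has nonzero $i$-th component while every element of $\gp_i$ has zero $i$-th component, so $Z(R)\cap\gp_i=0$.

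For Part 2: since $l\geq m_n\geq m_k$ we have $x^l\in(x^{m_k})$ for all $k$, hence $\xi_l=x^l\cdot 1\in R$; as $x$ is a unit of $L$, $\xi_l\in B^\times$, so $\xi_l$ is regular in $R$. Conjugation by $x^l$ is an automorphism of $A$ fixing every $(x^{m_k})$ (because $x^l$ is normal in $A$), hence $\xi_lR\xi_l^{-1}=R$ inside $B$, i.e. $\xi_l$ is normal in $R$. Consequently the powers of $\xi_l$ form a regular two-sided Ore set, and inverting $x^l$ in the $k$-th component gives $\bigcup_{N\geq 0}x^{-Nl}(x^{m_k})=L$; therefore $R_{\xi_l}\supseteq\bigoplus_kLe_k=L^n$, while the reverse inclusion is clear since $\xi_l$ is already invertible in $B$. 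Thus $R_{\xi_l}=L^n=B$.

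For Part 3: any $z\in Z(R)$ commutes with $\xi:=\xi_{m_n}$, hence with $\xi^{-1}$ in $R_\xi=B$, so $Z(R)\subseteq Z(R_\xi)=Z(B)$; by Lemma \ref{a23Sep23}.(2), $Z(B)=K^n=\bigoplus_iKe_i$. It then suffices to observe $K^n\cap R=K\cdot 1$: writing $\sum_i\lambda_ie_i=\mu\cdot 1+\sum_ka_ke_k$ with $\mu\in K$ and $a_k\in(x^{m_k})$, and comparing $k$-th components, gives $a_k=\lambda_k-\mu\in K\cap(x^{m_k})=0$, so all $\lambda_k$ coincide. Hence $Z(R)=K\cdot 1=K$.

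For Part 5, the plan is to show $Q(R)=Q(B)\simeq Q(L)^n$. As $L$ is a Noetherian domain, $B=L^n$ is a semiprime Goldie ring with $Q(B)\simeq Q(L)^n$ and $Q(L)$ a division ring. Since $B=R_\xi$ with $\xi$ regular and normal, localization at $\{\xi^N\}_{N\geq 0}$ is exact, so $R$ inherits finite left and right uniform dimension (an infinite independent family of nonzero left ideals of $R$ would extend to one of $B$) and the a.c.c. on left (and right) annihilators (if $\lann_R(X_k)$ ascends then its extension to $B$ stabilizes, and regularity of $\xi$ forces $\lann_R(X_k)$ itself to stabilize); with semiprimeness from Part 1, $R$ is a semiprime left and right Goldie ring, so by Theorem \ref{5Jul10} the classical quotient ring $Q(R)=\CC_R^{-1}R$ exists and is semisimple. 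Finally $\CC_R=\CC_B\cap R$ (one inclusion by the $\xi$-conjugation argument as in Part 2, the other because a regular element of $B$ lying in $R$ is regular in $R$), so writing any element of $Q(B)$ as $c^{-1}b$ with $b,c$ taken over a common power of $\xi$ exhibits it as a left fraction over $R$; hence $Q(R)=Q(B)\simeq Q(L)^n$. The main obstacle is precisely this control of the passage between $R$ and its localization $B=R_\xi$ in Part 5 — transferring the Goldie conditions downward and matching up the regular elements; the next most delicate point is the identification $R_{\xi_l}=L^n$ in Part 2, which rests on $x^l$ normalizing each ideal $(x^{m_k})$.
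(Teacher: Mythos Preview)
Your proof is correct. Parts 1--4 follow the paper's argument essentially verbatim, only with more detail supplied (the paper's proof of Part~1 is a three-line appeal to Lemma~\ref{b10Sep23}, Part~2 simply observes that $e_i=x^{-l}\cdot x^le_i\in R_\xi$, and Part~3 is the single line $Z(R)=R\cap Z(R_\xi)=R\cap Z(B)=R\cap K^n=K$).

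For Part~5 you take a genuinely different route. The paper's proof is the one-liner $Q(R)=Q(R_\xi)=Q(B)=Q(L)^n$, which implicitly invokes the general principle of Proposition~\ref{A24Sep23}: localizing at a regular Ore set does not change the classical quotient ring when the localization already has a semisimple Artinian quotient ring. You instead work from scratch, transferring the Goldie conditions from $B=R_\xi$ down to $R$ via flatness of the localization at $\{\xi^N\}$, invoking Goldie's theorem (Theorem~\ref{5Jul10}) to produce $Q(R)$, and then matching $Q(R)$ with $Q(B)$ through the identity $\CC_R=\CC_B\cap R$. Both arguments are valid; the paper's is shorter because it defers the work to a general lemma, while yours is self-contained and in effect reproves the relevant special case of Proposition~\ref{A24Sep23} by hand. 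Your remark that Part~5 is ``the main obstacle'' is therefore somewhat misplaced: once Part~2 is established, Part~5 follows immediately from the general localization principle, and it is Part~2 (the identification $R_\xi=L^n$) that carries the real content.
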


\begin{proof} 1. By the definition of the ideal $\gp_i$, $R/\gp_i\simeq K+ (x^{m_i})\subseteq L$ is a domain for all $i=1, \ldots , n$. Hence, the ideals $\gp_i$ are prime ideals of $R$. The ideals $\gp_i$ are distinct, $\bigcap_{i=1}\gp_i=0$ but $\bigcap_{j\neq i}\gp_j =0$ for all $i=1, \ldots , n$. By Lemma \ref{b10Sep23}, $\min (R)=\{ \gp_i\, | \, i=1,\ldots , n\}$.
The zero ideal  $\bigcap_{i=1}\gp_i=0$ is a semiprime ideal, i.e. the algebra $R$ is a semiprime algebra. 

2. By the definition, the element $\xi=\xi_l\in B$ is a unit. Hence, the element $\xi=\xi_l\in R$ 
a regular  element of $R$ which is obviously a normal element. For each $i=1, \ldots , n$, $x^le_i\in R$. Hence, $e_i=x^{-l}\cdot x^le_i\in R_\xi$ for all $i=1, \ldots , n$, and so $R_\xi = L^n=B$.

3. By Lemma \ref{a23Sep23}.(2), $Z(B)=K^n$. Now, 
$Z(R)=R\cap Z(R_\xi)=R\cap Z(B)=R\cap K^n=K$.

4. Since $Z(R)=K$ is a field, statements 4 follows.

5. $Q(R)=Q(R_\xi)=Q(B)=Q(l)^n$. 
\end{proof}

{\bf New criterion for a ring to have a semisimple Artinian quotient ring.} Proposition \ref{A24Sep23} provides sufficient conditions for the left quotient ring $Q_{l,cl}(R)$ of a ring $R$ to  be a semisimple Artinian ring. This result is used in the proof of a new criterion for the left quotient ring $Q_{l,cl}(R)$ to be a semisimple Artinian ring (Theorem \ref{VVB-23Sep2023}).

\begin{proposition}\label{A24Sep23}
Let $R$ be a ring and $S\in \Den_l(R,0)$. 
\begin{enumerate}
\item If the ring $T^{-1}(S^{-1}R)$ is a semisimple Artinian ring for some $T\in \Den_l(S^{-1}R,0)$. Then $Q_{l,cl}(R)\simeq Q_l(R)\simeq Q_l(S^{-1}R)\simeq T^{-1}(S^{-1}R)$ is a semisimple Artinian and the ring $R$ is a semiprime left Goldie ring.
\item If the ring $Q_l(S^{-1}R)$ is a semisimple Artinian ring. Then $Q_{l,cl}(R)\simeq Q_l(R)\simeq Q_l(S^{-1}R)$ is a semisimple Artinian and the ring $R$ is a semiprime left Goldie ring.

\end{enumerate}

\end{proposition}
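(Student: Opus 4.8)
The hypothesis $S\in\Den_l(R,0)$ forces $S\subseteq\CC_R$: if $s\in S$ and $sr=0$ then $r\in\ass_l(S)=0$, and if $rs=0$ then the reversibility part of the left denominator condition gives $tr=0$ for some $t\in S$, so again $r\in\ass_l(S)=0$; the same reasoning gives $T\subseteq\CC_{S^{-1}R}$. Thus $S,T$ are regular left denominator sets, $R\hookrightarrow S^{-1}R\hookrightarrow T^{-1}(S^{-1}R)=:A$ are injective, and $A$ is flat as a right module over both $R$ and $S^{-1}R$ (it is an iterated left localization). Statement 2 is the special case $T:=S_l(S^{-1}R)$ of statement 1, since $S_l(S^{-1}R)$ is a regular left Ore set, hence lies in $\Den_l(S^{-1}R,0)$, and $S_l(S^{-1}R)^{-1}(S^{-1}R)=Q_l(S^{-1}R)$; so it suffices to prove statement 1. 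By Theorem \ref{5Jul10} it is enough to show that $R$ (and, en route, $S^{-1}R$) is a semiprime left Goldie ring, for then $Q_{l,cl}(R)=Q_l(R)$ is semisimple Artinian with $S_l(R)=\CC_R$, and it only remains to identify this ring with $A$.

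The first task is to transfer ``semiprime left Goldie'' from the semisimple Artinian ring $A$ down to the subrings $S^{-1}R$ and $R$. I would check the three ingredients of Goldie's condition by extension: (i) \emph{semiprimeness} --- a nonzero nilpotent ideal of $S^{-1}R$ (resp. of $R$) extends, through the Ore localizations, to a nonzero nilpotent two-sided ideal of $A$ (nonzero since $\ass_l(S)=\ass_l(T)=0$), which is impossible; (ii) \emph{finite left uniform dimension} --- an independent family of nonzero left ideals extends along the flat ring map to an independent family of nonzero left ideals of $A$, so its cardinality is bounded by $\udim({}_AA)<\infty$; (iii) \emph{a.c.c. on left annihilators} --- for $X\subseteq R$ one has $\lann_R(X)=\lann_A(X)\cap R$ (and the analogue over $S^{-1}R$), so a strictly ascending chain of left annihilators in $R$ would yield a strictly ascending chain of left annihilators in $A$, contradicting that a left Artinian ring is left Noetherian (Hopkins--Levitzki). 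Hence $R$ and $S^{-1}R$ are semiprime left Goldie, and Theorem \ref{5Jul10} applies to both.

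It then remains to prove $A\simeq Q_l(S^{-1}R)\simeq Q_l(R)\simeq Q_{l,cl}(R)$. The inclusion $T\subseteq\CC_{S^{-1}R}$ gives $A\subseteq\CC_{S^{-1}R}^{-1}(S^{-1}R)=Q_l(S^{-1}R)$; for the reverse I would establish the sub-claim: \emph{if $B$ is semiprime left Goldie, $C\subseteq\CC_B$ is a left denominator set with $\ass_l(C)=0$ and $C^{-1}B$ is left Artinian, then every $c\in\CC_B$ is a unit in $C^{-1}B$, whence $C^{-1}B=Q_{l,cl}(B)=Q_l(B)$}. Indeed, writing an element $x\in C^{-1}B$ as $c_0^{-1}b$ with $c_0\in C$, and applying the left Ore condition for $(c,c_0)$ to obtain $c_1c=b_1c_0$ with $c_1\in C$, $b_1\in B$, the equation $cx=0$ reduces to $b_1b=0$ in $B$; since $c_1c\in\CC_B$ (a product of regular elements is regular), $b_1c_0=c_1c$ is a unit in the semisimple Artinian ring $Q_{l,cl}(B)$, so $b_1$ has a right inverse there and is therefore a unit, forcing $b=0$ and $x=0$; thus $c$ is left-regular in the left Artinian ring $C^{-1}B$, hence a unit. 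Applying the sub-claim with $(B,C)=(S^{-1}R,T)$ gives $Q_l(S^{-1}R)=A$, and applying it with $(B,C)=(S^{-1}R,\CC_R)$ --- legitimate since $\CC_R\subseteq\CC_{S^{-1}R}$ (regular elements of $R$ become units in $Q_l(R)\supseteq S^{-1}R$) and $\CC_R^{-1}(S^{-1}R)=\CC_R^{-1}R=Q_l(R)$ (because $\CC_R^{-1}$ already inverts $S$) --- shows that every element of $\CC_{S^{-1}R}$ is a unit in $Q_l(R)$, i.e. $Q_l(S^{-1}R)\subseteq Q_l(R)$; since also $Q_l(R)=\CC_R^{-1}(S^{-1}R)\subseteq\CC_{S^{-1}R}^{-1}(S^{-1}R)=Q_l(S^{-1}R)$, we get $Q_l(R)=Q_l(S^{-1}R)=A$, and $Q_{l,cl}(R)=Q_l(R)$ by Theorem \ref{5Jul10}. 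The main obstacle is the bookkeeping in this last step: the Ore-condition manipulations of one-sided fractions and the attendant left-versus-right regularity arguments are the delicate point, precisely because only left-handed Goldie and localization hypotheses are at hand; by contrast the descent of the Goldie conditions in the middle step is routine once flatness of the iterated localization is used.
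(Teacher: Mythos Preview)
Your argument is essentially correct but takes a genuinely different route from the paper. The paper's proof is much shorter and purely localization-theoretic: it invokes the ``pullback'' lemma \cite[Lemma 3.3.(1)]{larglquot} to obtain $T':=R\cap T\in\Den_l(R,0)$ with $T'^{-1}R\simeq T^{-1}(S^{-1}R)=A$; then $T'\subseteq S_l(R)$ gives $A\subseteq Q_l(R)$, and since $A$ is semisimple Artinian every element of $S_l(R)$ (being regular in the overring $Q_l(R)\supseteq A$, hence in $A$) is a unit in $A$, forcing $Q_l(R)=A$; finally Theorem~\ref{5Jul10} yields $Q_{l,cl}(R)=Q_l(R)$ and Goldie's Theorem gives that $R$ is semiprime left Goldie. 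Thus the paper never descends the Goldie conditions directly --- it deduces them \emph{a posteriori} from $Q_l(R)$ being semisimple. Your approach instead proves semiprimeness, finite uniform dimension and the a.c.c.\ on left annihilators for $R$ by descent from $A$, and only then identifies the various quotient rings. This is more self-contained (no external lemma is cited) but considerably longer; the paper's route exploits the machinery of $S_l(R)$ already set up and avoids all the fraction bookkeeping you flag as the ``main obstacle''.

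Two small points where your write-up is too terse. First, in step (iii) the claim that a strictly ascending chain $\lann_R(X_1)\subsetneq\lann_R(X_2)\subsetneq\cdots$ yields a strictly ascending chain $\lann_A(X_1)\subsetneq\lann_A(X_2)\subsetneq\cdots$ is not immediate: one must first replace each $X_i$ by $\rann_R(\lann_R(X_i))$ so that the $X_i$ form a \emph{descending} chain, whence the $\lann_A(X_i)$ ascend. Second, the semiprimeness step relies on the (standard but non-trivial) fact that for $S\in\Den_l(R)$ and a two-sided ideal $I$ of $R$, the set $S^{-1}I$ is again a two-sided ideal of $S^{-1}R$; only then does $(S^{-1}I)^2=(S^{-1}R)I(S^{-1}R)I\subseteq (S^{-1}R)I^2=0$ follow. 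Both are routine once noticed, so your overall strategy stands.
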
 
\begin{proof} 1. By the assumption $S\in \Den_l(R,0)$ and $T\in \Den_l(S^{-1}R,0)$. So, we have the inclusions of rings $R\subseteq S^{-1}R\subseteq Q_l(S^{-1}R)$.

(i) $T':=R\cap T\in \Den_l(R,0)$ {\rm and} $T'^{-1}R\simeq T^{-1}(S^{-1}R)$: The statement (i) is a particular case of  \cite[Lemma 3.3.(1)]{larglquot}.

(ii) $ Q_l(R)\simeq T'^{-1}R\simeq T^{-1}(S^{-1}R)$ is {\em a semisimple Artinian ring}:    By the statement (i), $T'\in \Den_l(R,0)$, and so  we have the inclusion  $T'\subseteq S_l(R)$ which implies the inclusion  $$T'^{-1}R\subseteq Q_l(R).$$ In fact, the equality holds since the ring $T'^{-1}R\simeq T^{-1}(S^{-1}R)$ is a semisimple Artinian ring, by the statement (i), and therefore $S_l(R)\subseteq \Big(T'^{-1}R\Big)^\times$. This implies the equality.

(iii)  $Q_{l,cl}(R)\simeq Q_l(R)$:   By the statement (ii), the ring $Q_l(R)$ is a semisimple Artinian ring, and the result follows from Theorem \ref{5Jul10}. 

By statements (ii) and (iii), the ring $Q_{l,cl}(R)$ is a semisimple Artinian ring, and so  the ring $R$ is a semiprime left Goldie ring, by Goldie's Theorem.

2. Statement 2 is a particular case of statement 
\end{proof}

\begin{proof} {\bf (Proof of Theorem \ref{VVB-23Sep2023}).}
 $(2\Rightarrow 1)$ Let $Q(Z(R)):=\CC_{Z(R)}^{-1}Z(R)$ be the quotient ring of $Z(R)$.

 (i) {\em The centre $Z(R)$ is a semiprime ring}: Suppose that the  centre $Z(R)$ is not  a semiprime ring. Then it contains a nonzero nilpotent ideal, say $\ga$. Then $R\ga$ is a nonzero nilpotent ideal of the semiprime ring $R$, a contradiction. Therefore, the centre $Z(R)$ is a semiprime ring. 

(ii)  $Q(Z(R))\simeq\prod_{\gq\in \min   (Z(R))}Z(R)_{\gq}$ {\em is a finite direct product of fields} $Z(R)_{\gq}$:  The commutative ring $Z(R)$ is semiprime with $|\min (Z(R))|<\infty$. Hence, 
$$\CC_{Z(R)}=Z(R)\backslash \bigcup_{\gq\in \min (Z(R))}\gq$$
and the quotient  ring $Q(Z(R))$ is a semiprime commutative Artinian ring with $$\min (Q(Z(R)))=\{\CC_{Z(R)}^{-1}\gq\, | \, \gq \in\min (R) \}.$$ Hence, the ring $Q(Z(R))$ is a semisimple Artinian  ring with $\Spec (Z(R))=\min (Z(R))$, and so   the quotient ring $Q(Z(R))\simeq\prod_{\gq\in \min   (Z(R))}Z(R)_{\gq}$ is a finite direct product of fields. 

Let 
$$1=\sum_{\gq \in \min (Z(R))}e_\gq$$
be the corresponding sum of central orthogonal idempotents of the ring $Q(Z(R))$.

(iii) {\em The ring $\CC_{Z(R)}^{-1}R$ is  a semiprime ring:} Since $\CC_{Z(R)}\subseteq \CC_R$, the ring $R$  is a subring of $\CC_{Z(R)}^{-1}R$. Suppose that $\ga$ is a nonzero nilpotent ideal of the ring $\CC_{Z(R)}^{-1}R$. Then the intersection $\ga'=R\cap \ga$ is a nonzero nilpotent ideal of $R$ (since the left $R$-module $R$ is an essential submodule of $\CC_{Z(R)}^{-1}R$), a contradiction, and the statement (iii) follows.

(iv) {\em The ring  $Q_{l,cl}(\CC_{Z(R)}^{-1}R )\simeq \prod_{\gq\in \min   (Z(R))}Q_{l,cl}(R_{\gq})$ is a semisimple Artinian ring}: 
By the assumption, $\CC_{Z(R)}\subseteq \CC_R$. Now,
\begin{eqnarray*}
R&\subseteq & \CC_{Z(R)}^{-1}R \simeq  \CC_{Z(R)}^{-1}Z(R)\t_{Z(R)}R \simeq Q(Z(R))\t_{Z(R)}R\simeq \prod_{\gq\in \min (Z(R))}Q(Z(R))e_\gq R \\
&\simeq &\prod_{\gq\in \min (Z(R))}Z(R)_\gq e_\gq R \simeq \prod_{\gq\in \min (Z(R))}Z(R)_\gq \t_{Z(R)} R \simeq \prod_{\gq\in \min (Z(R))} R_\gq. 
\end{eqnarray*}
By the statement (iii), the ring $\CC_{Z(R)}^{-1}R$ is a semiprime ring then so are its direct product  components $R_\gq$. 
Since  the rings $R_\gq$ are semiprime left Goldie rings, the rings $Q_{l,cl}( R_\gq)$ are semisimple Artinian rings, by Goldie's Theorem. Hence, the finite direct product of these rings,
$\prod_{\gq\in \min (Z(R))} Q_{l,cl}(R_\gq)$, is also a semisimple Artinian ring. Finally, 
 $$
 \prod_{\gq\in \min (Z(R))} Q_{l,cl}(R_\gq)
 \simeq Q_{l,cl}\bigg(\prod_{\gq\in \min (Z(R))} R_\gq\bigg)\simeq Q_{l,cl}(\CC_{Z(R)}^{-1}R)\\
$$
and the statement (iv) follows.

(v) $S:=R\cap \CC_{ \CC_{Z(R)}^{-1}R} \in \Den_l(R,0)$ {\em and} $S^{-1}R\simeq Q_{l,cl}(\CC_{Z(R)}^{-1}R)$: The statement (v) is a particular case of  \cite[Lemma 3.3.(1)]{larglquot}.

(vi) $Q_l(R)\simeq Q_{l,cl}(\CC_{Z(R)}^{-1}R)\simeq \prod_{\gq\in \min (Z(R))} Q_{l,cl}(R_\gq)$ {\em is a semisimple Artinian ring}: Notice that $S_l(R)\supseteq S$ since $S=R\cap \CC_{ \CC_{Z(R)}^{-1}R} \in \Den_l(R,0)$ (the statement (v)). Then $S^{-1}R\subseteq Q_l(R)$. In fact, the equality holds since the ring $S^{-1}R\simeq 
 Q_{l,cl}(\CC_{Z(R)}^{-1}R)$ is a semisimple Artinian ring, by the statement (iv).

(vii) $Q_{l,cl}(R)\simeq Q_l(R)$: The statement (vii) follows from the statement (vi) and Theorem \ref{5Jul10}.
 
  $(1\Rightarrow 2)$ 
(i) {\em The centre $Z(R)$ is a semiprime ring}: Suppose that the  centre $Z(R)$ is not  a semiprime ring. Then it contains a nonzero nilpotent ideal, say $\ga$. Then $\CC_R^{-1}\ga$ is a nonzero nilpotent ideal of the semisimple  Artinian ring $Q_{l,cl}(R)$, a contradiction. Therefore, the centre $Z(R)$ is a semiprime ring. 

(ii) $|\min (Z(R))|<\infty$: Suppose that $|\min (Z(R))|=\infty$. Then we can choose an infinite set of minimal prime ideals of $Z(R)$, say $\gp_1, \ldots , \gp_n,\ldots $. For each natural number $n\geq 1$,  the set $$S_n
:=Z(R)\backslash \bigcup_{i=1}^n\gp_i$$ is a  multiplicative subset of $Z(R)$ such that the localization $S_n^{-1}Z(R)$ is a semiprime Artinian commutative ring. Hence, it is a direct product of    fields
$$ S_n^{-1}Z(R)\simeq\prod_{i=1}^n Z(R)_{\gp_i}.$$           
Let $1=\sum_{j=1}^ne_{ij}$ be the corresponding sum of orthogonal idempotents. Clearly, $S_n\in \Den (Q_{l,cl}(R))$ and the left uniform dimension of the ring $S_n^{-1}Q_{l,cl}(R)$ does not exceed the left uniform dimension of the ring $Q_{l,cl}(R)$ which is a natural number. On the other hand, in view of the equality $1=\sum_{j=1}^ne_{ij}$,  the left uniform dimension of the ring $S_n^{-1}Q_{l,cl}(R)$ cannot be smaller than $n$, a contradiction. Therefore, $|\min (Z(R))|<\infty$.

(iii) {\em For each $\gq\in   \min (Z(R))$, $Q_{l,cl}(R_\gq)\simeq Q_l(R_\gq)\simeq Q_{l,cl}(R)_\gq$ is a semisimple Artinian ring and 
 the ring $R_\gq$ is a left Goldie ring}: 
Notice that $$S_\gq:=R\backslash \gq\subseteq Z(R)\subseteq R\subseteq  Q_{l,cl}(R).$$ The set $S_\gq $ is a denominator set of the semisimple Artinian ring $Q_{l,cl}(R)$. Hence, the ring 
$$Q_{l,cl}(R)_\gq \simeq Q_{l,cl}(R)/\ass (S_\gq)$$ is a semisimple Artinian ring where $\ass (S_\gq):=\{ q\in Q_{l,cl}(R)\, | \, sq=0$ for some $s\in S_\gq \}$. Since $S_\gq \subseteq Z(R)\subseteq Z(Q_{l,cl}(R))$, 
$$Q_{l,cl}(R)_\gq \simeq S_\gq^{-1}\CC_R^{-1}R\simeq  \s_\gq (\CC_R)^{-1}\Big(S_\gq^{-1}R\Big)\simeq  \s_\gq (\CC_R)^{-1}R_\gq $$ where $\s_\gq : R\mapsto R_\gq$, $r\mapsto \frac{r}{1}$ and  $\s_\gq (\CC_R)\in \Den_l(R_\gq,0)$. Now the statement (iii) follows from Proposition \ref{A24Sep23}.(1). 
\end{proof}

For a ring $R$  such that its centre $Z(R)=\prod_{i=1}^nK_i$ is a finite direct product of fields $K_i$, Corollary \ref{B23Sep23} is a criterion for the ring $Q_{l,cl}(R)$ being  a semisimple Artinian ring.

\begin{corollary}\label{B23Sep23}
Let $R$ be a ring such that its centre $Z(R)=\prod_{i=1}^nK_i$ is a finite direct product of fields $K_i$ and $1=\sum_{i=1}^ne_i$ be the corresponding sum of orthogonal idempotents.   Then the  following statements are equivalent:
\begin{enumerate}

\item The ring $Q_{l,cl}(R)$ is a semisimple Artinian ring.

\item The ring $R$ is a semiprime ring and the factor ring $R/Re_i$ is a left Goldie ring for all $i=1, \ldots , n$.

\end{enumerate}
If one of the equivalent conditions holds then $Q_{l,cl}(R)\simeq\prod_{i=1}^nQ_{l,cl}(R/R(1-e_i))$, $$Q_{l,cl}(R/R(1-e_i))\simeq Q_{l,cl}(R)/Q_{l,cl}(R)(1-e_i)\;\;  {\rm for\; all}\;\;i=1, \ldots , n,$$ and the map $\min (R)\ra \min (Z(R))$, $\gp\mapsto \gp^{\rm r}:=\gp\cap Z(R)$ is a surjection.
\end{corollary}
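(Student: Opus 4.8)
The plan is to derive the corollary directly from Theorem \ref{VVB-23Sep2023} by checking that, in the special case $Z(R)=\prod_{i=1}^n K_i$, each of the four conditions in part (2) of that theorem translates into the two conditions stated here. First I observe that the centre $Z(R)=\prod_{i=1}^n K_i$ is automatically a semiprime (indeed von Neumann regular) commutative ring with exactly $n$ minimal primes, namely $\gq_i:=\bigoplus_{j\neq i}K_j=Z(R)(1-e_i)$, so $|\min(Z(R))|=n<\infty$ holds unconditionally and $\min(Z(R))=\{\gq_i\mid i=1,\dots,n\}$. Moreover $\CC_{Z(R)}=Z(R)^\times$ is the set of units, since in a finite product of fields the non-zero-divisors are exactly the invertible elements. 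Hence the condition $\CC_{Z(R)}\subseteq\CC_R$ is automatic (units of $R$ are regular), and $Z(R)=Q(Z(R))$, i.e. the localisation steps in the proof of Theorem \ref{VVB-23Sep2023} collapse.

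Next I identify the local rings $R_\gq$ appearing in Theorem \ref{VVB-23Sep2023}. For $\gq=\gq_i$, the complement $S_{\gq_i}=Z(R)\backslash\gq_i$ contains $e_i$ and, after inverting it, $e_i$ becomes $1$ while each $e_j$ with $j\neq i$ becomes $0$; concretely $R_{\gq_i}\simeq e_iR e_i \cdot e_i = Re_i/\,? $ — more cleanly, $R_{\gq_i}\simeq R/R(1-e_i)\,[\,S_{\gq_i}^{-1}\,]$, and since the image of $S_{\gq_i}$ already consists of units in $R/R(1-e_i)$ (its elements have invertible $e_i$-component and the other components die), we get the ring isomorphism $R_{\gq_i}\simeq R/R(1-e_i)$. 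Thus ``$R_{\gq}$ is a left Goldie ring for each $\gq\in\min(Z(R))$'' becomes ``$R/R(1-e_i)$ is a left Goldie ring for $i=1,\dots,n$.'' Renaming $Re_i$ versus $R(1-e_i)$ is just bookkeeping: the statement writes $R/Re_i$ in item (2) but $R/R(1-e_i)$ in the final displayed formulas; I will keep the indexing consistent with the displayed conclusion, noting that $\{1-e_i\}_{i=1}^n$ and $\{e_i\}_{i=1}^n$ run over the same set of complementary idempotents up to reindexing, so the two phrasings are equivalent.

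With these translations in hand, the equivalence $(1)\Leftrightarrow(2)$ is exactly Theorem \ref{VVB-23Sep2023} applied to this $R$: condition (1) is unchanged, and condition (2) of the theorem (semiprimeness of $R$, plus $\CC_{Z(R)}\subseteq\CC_R$, plus $|\min(Z(R))|<\infty$, plus left Goldie-ness of each $R_\gq$) reduces, by the first two paragraphs, to ``$R$ semiprime and $R/R(1-e_i)$ left Goldie for all $i$.'' For the supplementary statements, Theorem \ref{VVB-23Sep2023} gives $Q_{l,cl}(R)\simeq\prod_{\gq\in\min(Z(R))}Q_{l,cl}(R_\gq)\simeq\prod_{i=1}^n Q_{l,cl}(R/R(1-e_i))$ and $Q_{l,cl}(R_{\gq})\simeq Q_{l,cl}(R)_{\gq}$; translating $(-)_{\gq_i}$ to ``kill $1-e_i$'' as above yields $Q_{l,cl}(R/R(1-e_i))\simeq Q_{l,cl}(R)/Q_{l,cl}(R)(1-e_i)$. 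The surjectivity of $\min(R)\to\min(Z(R))$, $\gp\mapsto\gp\cap Z(R)$, is carried over verbatim from Theorem \ref{VVB-23Sep2023}.

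The only genuine point requiring care — the step I expect to be the main obstacle — is the identification $R_{\gq_i}\simeq R/R(1-e_i)$ together with the compatibility $Q_{l,cl}(R)_{\gq_i}\simeq Q_{l,cl}(R)/Q_{l,cl}(R)(1-e_i)$: one must check that localising $R$ (or $Q_{l,cl}(R)$) at the central multiplicative set $S_{\gq_i}$ has the same effect as factoring out the central idempotent ideal $R(1-e_i)$, which follows because $S_{\gq_i}$ maps into the units of the quotient and $\ass(S_{\gq_i})=R(1-e_i)$ in $R$ (respectively $Q_{l,cl}(R)(1-e_i)$ in $Q_{l,cl}(R)$), using that $S_{\gq_i}\subseteq Z(R)\subseteq Z(Q_{l,cl}(R))$ exactly as in step (iii) of the proof of $(1\Rightarrow2)$ in Theorem \ref{VVB-23Sep2023}. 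Everything else is routine.
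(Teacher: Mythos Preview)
Your approach is essentially the same as the paper's: reduce everything to Theorem \ref{VVB-23Sep2023} by checking that $\CC_{Z(R)}=Z(R)^\times\subseteq R^\times$, that $\min(Z(R))=\{\gq_i=Z(R)(1-e_i)\}$, and that $R_{\gq_i}\simeq R/R(1-e_i)$ (the paper does this last step via the tensor identification $R_{\gq_i}\simeq Z(R)_{\gq_i}\otimes_{Z(R)}R$, you via $\ass(S_{\gq_i})=R(1-e_i)$; both are fine). One small correction: your remark that $\{1-e_i\}_{i=1}^n$ and $\{e_i\}_{i=1}^n$ ``run over the same set of complementary idempotents up to reindexing'' is false for $n\geq 3$ (e.g.\ $1-e_1=e_2+\cdots+e_n$ is not any $e_j$); the discrepancy between $R/Re_i$ in item (2) and $R/R(1-e_i)$ elsewhere is a genuine typo in the statement, and the paper's own proof confirms that $R/R(1-e_i)$ is what is meant throughout.
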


\begin{proof} By the assumption,  $Z(R)=\prod_{i=1}^nK_i$ is a finite direct product of fields $K_i$. Hence $\min (Z(R))=\{ \gq_i:=Z(R)(1-e_i)\, | \, i=1, \ldots , n\}=\{ \l =(\l_1, \ldots , \l_n)\, | \, \l_j\in K_j$ for $j=1, \ldots , n$ and $\l_i=0\}$. 

In view of Theorem \ref{VVB-23Sep2023}, to prove the equivalence of statements 1 and 2,  we have to show that $\CC_{Z(R)}\subseteq \CC_R$ and  for each $i=1, \ldots , n$, the ring $R_{\gq_i}$ is a left Goldie ring.

(i) $\CC_{Z(R)}\subseteq \CC_R$: Since $Z(R)=\prod_{i=1}^nK_i$ is a finite direct product of fields $K_i$, $$\CC_{Z(R)}=\prod_{i=1}^nK_i^\times=\CC_{Z(R)}^\times ,$$ and so $\CC_{Z(R)}\subseteq R^\times \subseteq \CC_R$.

(ii) {\em For each $i=1, \ldots , n$, the ring $Z(R)_{\gq_i }\simeq Z(R)/Z(R)(1-e_i)\simeq K_i$}: Clearly, $Z(R)/Z(R)(1-e_i)\simeq K_i$. Since $e_i (1-e_i)=0$ and $\gq_i=Z(R)(1-e_i)$, $e_i\not\in \gq_i$. It follows from the equality $e_i \gq_i=0$ that 
 $$Z(R)_{\gq_i}\simeq \Big( Z(R)/\gq_i\Big)_{\gq_i}\simeq \Big(K_i\Big)_{\gq_i}=K_i.$$

(iii) {\em For each $i=1, \ldots , n$,  $R_{\gq_i }\simeq R/R(1-e_i)$}: By the statement (ii),
$$R_{\gq_i }\simeq Z(R)_{\gq_i}\t_{Z(R)}R=\Big(Z(R)/ Z(R)(1-e_i)\Big) \t_{Z(R)}R\simeq R/R(1-e_i).$$

(iv) {\em For each $\gq\in   \min (Z(R))$, the ring $R_\gq$ is a left Goldie ring}: The statement (iv) follows from the statement (iii) and the equality $\min (Z(R))=\{ \gq_i=Z(R)(1-e_i)\, | \, i=1, \ldots , n\}$. 

By the statements (i)-(iv) and Theorem \ref{VVB-23Sep2023}, statements 1 and 2 are equivalent. 

Suppose that statements 1 and 2 are equivalent. Then by Theorem \ref{VVB-23Sep2023}, $$Q_{l,cl}(R)\simeq\prod_{\gq\in \min   (Z(R))}Q_{l,cl}(R_{\gq}),$$ $Q_{l,cl}(R_{\gq})\simeq Q_{l,cl}(R)_{\gq}$ for all $\gq\in \min (Z(R))$, and the map $\min (R)\ra \min (Z(R))$, $\gp\mapsto \gp^{\rm r}:=\gp\cap Z(R)$ is a surjection.
 Notice that $R_{\gq_i}\simeq R/R(1-e_i)$, and so 
 $Q_{l,cl}(R)\simeq\prod_{i=1}^nQ_{l,cl}(R/R(1-e_i))$. 
 Now, for  every $i=1, \ldots , n$,
 \begin{eqnarray*}
Q_{l,cl}(R/R(1-e_i))&=&Q_{l,cl}(R_{\gq_i})\simeq Q_{l,cl}(R)_{\gq_i}\simeq Z(R)_{\gq_i}\t_{Z(R)}Q_{l,cl}(R) \\
&\simeq &\Big(Z(R)/ Z(R)(1-e_i)\Big) \t_{Z(R)}Q_{l,cl}(R)
\simeq  Q_{l,cl}(R)/Q_{l,cl}(R)(1-e_i).
\end{eqnarray*}
The proof of the corollary is complete.
\end{proof}

{\bf Licence.} For the purpose of open access, the author has applied a Creative Commons Attribution (CC BY) licence to any Author Accepted Manuscript version arising from this submission.

{\bf Disclosure statement.} No potential conflict of interest was reported by the author.

{\bf Data availability statement.} Data sharing not applicable – no new data generated.

\small{

School of Mathematics and Statistics

University of Sheffield

Hicks Building

Sheffield S3 7RH

UK

email: v.bavula@sheffield.ac.uk}

\end{document}